\numberwithin{equation}{section}
\def\Re{{\rm Re}}
\def\Im{{\rm Im}}
\def\p{\partial}
\def\o{\overline}
\def\b{\bar}
\def\mb{\mathbb}
\def\mc{\mathcal}
\def\n{\nabla}
\theoremstyle{plain}
\newtheorem{thm}{Theorem}[section]
\newtheorem{lemma}[thm]{Lemma}
\newtheorem{cor}[thm]{Corollary}
\theoremstyle{definition}
\newtheorem{rem}[thm]{Remark}
\theoremstyle{definition}
\newtheorem{defn}[thm]{Definition}
\newcommand{\comment}[1]{}
\newenvironment{aligns}{\equation\aligned}{\endaligned\endequation}
\begin{document}

\title{PLURISUPERHARMONICITY 
 OF  RECIPROCAL ENERGY FUNCTION ON TEICHM\"ULLER SPACE AND WEIL-PETERSSON METRICS}

\makeatletter

\makeatother
\author{InKang Kim}
\author{Xueyuan Wan}
\author{Genkai Zhang}

\address{Inkang Kim: School of Mathematics, KIAS, Heogiro 85, Dongdaemun-gu Seoul, 02455, Republic of Korea}
\email{inkang@kias.re.kr}

\address{Xueyuan Wan: Mathematical Sciences, Chalmers University of Technology and Mathematical Sciences, G\"oteborg University, SE-41296 G\"oteborg, Sweden}
\email{xwan@chalmers.se}

\address{Genkai Zhang: Mathematical Sciences, Chalmers University of Technology and Mathematical Sciences, G\"oteborg University, SE-41296 G\"oteborg, Sweden}
\email{genkai@chalmers.se}
\thanks{Research by Inkang Kim is partially supported by Grant NRF-2017R1A2A2A05001002 and  research by Genkai Zhang is
  partially supported by Swedish
  Research Council (VR), for which we 
  gratefully
 acknowledge}
\begin{abstract}
  We consider harmonic maps
 $u(z):  \mc{X}_z\to N$   
  in a fixed homotopy class
  from
Riemann surfaces 
  $\mc{X}_z$ of genus $g\geq 2$ varying in the Teichm\"u{}ller space $\mathcal T$
  to a Riemannian manifold $N$    with non-positive Hermitian sectional curvature.
  The energy function $E(z)=E(u(z))$ can be
  viewed as a function on $\mathcal T$
  and we study its
  first and the second variations. We prove that the reciprocal energy
  function $E(z)^{-1}$ is  plurisuperharmonic on Teichm\"uller
  space. We also obtain the (strict) plurisubharmonicity of $\log
  E(z)$ and $E(z)$. As an application, we get the following
  relationship between the second variation of logarithmic energy
  function and the
  Weil-Petersson metric if  the harmonic map $u(z)$ is holomorphic or anti-holomorphic and totally geodesic, i.e.,
\begin{align}\label{A1}
\sqrt{-1}\p\b{\p}\log E(z)=\frac{\omega_{WP}}{2\pi(g-1)}.	
\end{align}
We consider also the energy function $E(z)$ associated to the harmonic maps from a fixed compact K\"ahler manifold $M$ to Riemann surfaces $\{\mc{X}_z\}_{z\in\mc{T}}$ in a fixed homotopy class. If $u(z)$ is holomorphic or anti-holomorphic, then (\ref{A1}) is also proved.
 \end{abstract}

\maketitle
\tableofcontents

\section*{Introduction}

Recently, the Weil-Petersson metric and other K\"ahler metrics on Teichm\"uller space
of a surface have been studied extensively.
The Weil-Petersson metric has several interesting properties,
it is K\"ahler \cite{Ahlfors}, incomplete \cite{Chu, Wol4}, geodesically convex \cite{Wol1} and negatively curved \cite{Tromba2, Wol5}, and
the energy function of harmonic maps between Riemann surfaces
is a K\"ahler potential of it \cite{FT, Wolf1}. In this paper we
 consider the log-plurisubharmonicity
 and  the pluri-superharmonicity of reciprocal energy function
 of harmonic maps between Riemann surfaces and a general Riemannian manifold, and we
compare
the  second variation with the Weil-Petersson metric.

Let $\Sigma$ be a Riemann surface
of genus $g\geq 2$
 equipped with  hyperbolic metric, and $M$ and $ N$  Riemannian manifolds.
There are two kinds of harmonic maps whose variations are of interests, the maps
$u: M \to \Sigma$  and maps 
$u: \Sigma \to N$. The primary examples of the first kind are
closed geodesics in $\Sigma$ viewed harmonic maps from the circle to
$\Sigma$. Now as the hyperbolic metric varies in the
Teichm\"u{}ller space $\mathcal T$
we get Riemann surfaces
$\mathcal X_z$ and 
the geodesic length can be viewed
as a function of $z\in \mathcal T$, 
the variation formulas of the geodesic length function, i.e., the energy function,
have been obtained in Axelsson and Schumacher's  formulas \cite{AS0, AS}.
In a recent  paper \cite{KWZ} we find general variational formulas
for harmonic maps $u: M\to \mathcal X_z$
and we  prove the logarithmic plurisubharmonicity of the energy function,
thus generalizing the results in \cite{AS0, AS}.
The another kind of harmonic maps
 $u: \Sigma\to N$  appear also naturally in the study of rigidity \cite
{Toledo} and in  Hitchin
components \cite{Labourie}.
 If $N$ is also a negatively curved
Riemann surface Tromba \cite{Tromba0} showed that this energy function
is strictly plurisubharmonic.
When  $N$ has non-positive Hermitian sectional curvature, Toledo
\cite{Toledo} proved that the energy function is also
plurisubharmonic.
A natural question is whether the logarithm of energy function is also plurisubharmonic. In this paper, we give an affirmative answer to this question. 

 Let $(N, g)$ be a Riemannian manifold with non-positive Hermitian
 sectional curvature (see Definition \ref{HSC}).
  In particular $N$ has non-positive sectional curvature.
 Let $\mc{T}$ be Teichm\"uller space of a  surface of genus
$g\geq 2$, and  $\pi:\mc{X}\to \mc{T}$  Teichm\"uller curve over Teichm\"uller space $\mc{T}$, namely it is the holomorphic family
of Riemann surfaces over $\mc{T}$,  the fiber $\mc{X}_z:=\pi^{-1}(z)$ being exactly the Riemann surface given by the complex structure $z\in\mc{T}$, see e.g. \cite[Section 5]{Ahlfors}.
Let $u_0: (\mc{X}_z, \Phi_z)\to (N,g)$ be a continuous map, where $\Phi_z$ is the hyperbolic  metric on the Riemann surface $\mc{X}_z$. We assume that for  each $z\in \mc{T}$, there is a unique harmonic map $u(z):  (\mc{X}_z, \Phi_z)\to (N, g)$ homotopic to $u_0$.  Then we get a smooth map $u(z,v):\mc{X}\to N$ and the energy
 \begin{align}\label{0.1}
 E(z)=E(u(z))=	\frac{1}{2}\int_{\mc{X}_z} |du(z)|^2 d\mu_{\Phi_z}
\end{align}
is a smooth function on Teichm\"uller space, see \cite{EL, Sampson1, Tromba0} for proofs of smooth dependence in several contexts.  

Our first main theorem is 
\begin{thm}\label{Introthm1}
Let $(N,g)$ be a Riemannian manifold with non-positive Hermitian sectional curvature and fix a smooth map $u_0: \Sigma\to N$. If there is a unique harmonic map $u(z):\mc{X}_z\to N$ in the homotopy class $[u_0]$ for each $z\in\mc{T}$, then the reciprocal energy function $E(z)^{-1}$ is plurisuperharmonic.
\end{thm}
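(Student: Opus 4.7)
For any smooth positive function $E$ on a complex manifold, a direct computation yields
\[
\sqrt{-1}\,\p\b{\p}\,E^{-1} \;=\; E^{-3}\bigl(2\sqrt{-1}\,\p E\w \b{\p}E \;-\; E\cdot\sqrt{-1}\,\p\b{\p}E\bigr),
\]
so $E^{-1}$ is plurisuperharmonic on $\mc{T}$ if and only if
\[
E(z)\,\p_v\p_{\b v}E(z) \;\geq\; 2\,|\p_v E(z)|^2 \qquad\text{for every }v\in T_z^{1,0}\mc{T}. \qquad(\star)
\]
The plan is to establish $(\star)$ by combining the first- and second-variation formulas for $E$ (developed in the preceding sections) with a Cauchy--Schwarz inequality exploiting the Hopf differential.

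\textbf{First variation via the Hopf differential.}
Let $\mu$ be the harmonic Beltrami differential on $\mc{X}_z$ representing the tangent vector $v$, and let $\phi_z = \langle\p u(z),\p u(z)\rangle$ be the Hopf differential of the harmonic map $u(z)$; harmonicity of $u(z)$ forces $\phi_z\in H^0(\mc{X}_z,K^{\otimes 2})$. I would invoke the first-variation formula, which takes the shape
\[
\p_v E(z) \;=\; -2\int_{\mc{X}_z}\mu\,\phi_z,
\]
reflecting the fact that at a harmonic critical point the only contribution to $\p_v E$ comes from the deformation of the conformal structure, not from the variation of the target map.

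\textbf{Second variation and Cauchy--Schwarz.}
The second derivative $\p_v\p_{\b v}E$ splits as a curvature piece coming from $N$ plus a piece quadratic in $\mu$ and $\p u$. Under the non-positive Hermitian sectional curvature hypothesis (as in Toledo's proof of plurisubharmonicity of $E$), the curvature piece is non-negative, and after using the harmonic-map equation to eliminate the vertical variation of $u$ along $v$, the remainder dominates $4\int_{\mc{X}_z}|\mu|^2\,\|\p u(z)\|^2$ (up to normalization conventions). On the other hand, the pointwise bound $|\phi_z|\leq\|\p u(z)\|^2$ --- a Cauchy--Schwarz estimate inside $u^*TN\otimes\mb{C}$ --- together with the integral Cauchy--Schwarz, yields
\[
|\p_v E|^2 \;=\; 4\Bigl|\int \mu\,\phi_z\Bigr|^2 \;\leq\; 4\Bigl(\int|\mu|^2\|\p u\|^2\Bigr)\Bigl(\int\|\p u\|^2\Bigr) \;\leq\; 2E\int|\mu|^2\|\p u\|^2,
\]
using the identity $2\int\|\p u\|^2 \leq \int(\|\p u\|^2+\|\b{\p} u\|^2) = E$. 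Substituting the lower bound for $\p_v\p_{\b v}E$ then gives $(\star)$.

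\textbf{Main obstacle.}
The delicate point is not plurisubharmonicity of $E$ (Toledo) or even log-plurisubharmonicity of $E$ (which requires only constant $1$ in $(\star)$), but the sharp constant $2$. This extra factor stems from the asymmetry that $\phi_z$ only pairs against the $(1,0)$-part $\p u$ of $du$, so Cauchy--Schwarz applied to $\int\mu\,\phi_z$ costs only $\int\|\p u\|^2$, i.e.\ half of $E$. Verifying that this gain survives the curvature corrections and the cross-terms involving $\b{\p}u$ in the second-variation formula --- and keeping the factor $4$ in the lower bound for $\p_v\p_{\b v}E$ sharp --- is where the main technical work lies.
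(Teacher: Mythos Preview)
Your reduction to the inequality $(\star)$ is correct, and the overall architecture (first variation, second variation with curvature term dropped, then Cauchy--Schwarz) matches the paper. But the mechanism you propose for producing the sharp constant $2$ is wrong, and the proof as written does not go through.

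\textbf{The fatal step.} You claim the ``asymmetry'' bound $2\int\|\p u\|^2\leq\int(\|\p u\|^2+\|\b\p u\|^2)=E$. For maps into a \emph{real} Riemannian manifold $(N,g)$ --- which is the setting of the theorem --- this is false. With $u^i$ real coordinate functions one has $u^i_{\b v}=\overline{u^i_v}$, hence $\b\p u=\overline{\p u}$ and $\|\p u\|^2=\|\b\p u\|^2$ pointwise. Consequently $\int\|\p u\|^2=E$, not $\leq E/2$; there is no holomorphic/antiholomorphic asymmetry in the target to exploit. Your chain of inequalities therefore yields only $|\p_vE|^2\leq 4E\int|\mu|^2\|\p u\|^2$, and combined with your claimed lower bound $\p_v\p_{\b v}E\geq 4\int|\mu|^2\|\p u\|^2$ this gives $E\,\p_v\p_{\b v}E\geq|\p_vE|^2$ --- i.e.\ log-plurisubharmonicity, but not $(\star)$ with constant $2$.

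\textbf{A second gap.} The lower bound $\p_v\p_{\b v}E\geq 4\int|\mu|^2\|\p u\|^2$ is asserted but not proved. After dropping the curvature term, the second variation is $2\int\langle\n_{\b\xi}\p^Vu\wedge\n_\xi\b\p^Vu\rangle$, and while $\n_\xi u^j_{\b v}=-A^v_{\xi\b v}u^j_v+\n_{\b v}\frac{\delta u^j}{\delta\xi}$, these two pieces are not orthogonal in general, so you cannot simply extract the $|\mu|^2\|\p u\|^2$ part with a coefficient $\geq 2$, let alone $4$.

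\textbf{Where the $2$ actually comes from.} In the paper the constant $2$ is produced not by Cauchy--Schwarz but by the second variation formula itself (Theorem~\ref{Introthm5}):
\[
\p_\xi\p_{\b\xi}E \;\geq\; 2\int_{\mc X/\mc T}\langle \n_{\b\xi}\p^Vu\wedge\n_\xi\b\p^Vu\rangle.
\]
Cauchy--Schwarz is then applied directly to the first variation in the form of Theorem~\ref{Introthm4}, namely $\p_\xi E=\int\sqrt{-1}\langle\p^Vu\wedge\n_\xi\b\p^Vu\rangle$, pairing the factors $\p^Vu$ and $\n_\xi\b\p^Vu$ (Lemma~\ref{lemma5}): this gives $|\p_\xi E|^2\leq E\cdot\int\langle\n_{\b\xi}\p^Vu\wedge\n_\xi\b\p^Vu\rangle$ with coefficient $1$. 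The point is to keep the full variation $\n_\xi\b\p^Vu$ intact rather than expanding it via the Hopf differential; the factor $2$ then comes for free from the Hessian formula.
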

Note that the uniqueness assumption is typically satisfied. For instance, if $(N,g)$ has strictly negative sectional curvature, then the harmonic map is unique unless its image is either a point or a closed geodesic \cite{Hartman}. If $N$ is a locally symmetric space of non-compact type, then $u$ is also unique unless $u_*(\pi_1(\Sigma))$ is centralized by a semi-simple element in the group of isometries of the universal cover of $N$ \cite{Sun}.

We also obtain the strictly plurisubhramonicity of $\log E(z)$. More precisely,
\begin{thm}\label{Introthm2}
Under the conditions of Theorem \ref{Introthm1}, the logarithm of
energy function $\log E(z)$ of $u(z): \mathcal X_z\to N$
is plurisubharmonic. 
Moreover, if $(N,g)$ has strictly negative Hermitian sectional curvature and $d(u(z_0))$ is never zero on $\mc{X}_{z_0}$ for some $z_0\in \mc{T}$, then $\log E(z)$ is strictly plurisubharmonic at $z_0$.
\end{thm}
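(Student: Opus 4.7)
The plan is to derive both statements from Theorem~\ref{Introthm1} by a short algebraic identity, together with a careful reading of its proof for the strictness assertion. Writing $E = E(z) > 0$, a direct computation yields
\begin{equation*}
\sqrt{-1}\partial\bar\partial\log E \;=\; E^{-1}\sqrt{-1}\partial\bar\partial E \;-\; E^{-2}\sqrt{-1}\partial E\wedge\bar\partial E,
\end{equation*}
\begin{equation*}
\sqrt{-1}\partial\bar\partial(E^{-1}) \;=\; -E^{-2}\sqrt{-1}\partial\bar\partial E \;+\; 2E^{-3}\sqrt{-1}\partial E\wedge\bar\partial E,
\end{equation*}
which combine to give the key identity
\begin{equation*}
\sqrt{-1}\partial\bar\partial\log E \;=\; -E\cdot\sqrt{-1}\partial\bar\partial(E^{-1}) \;+\; E^{-2}\sqrt{-1}\partial E\wedge\bar\partial E.
\end{equation*}
Both summands on the right are non-negative $(1,1)$-forms: the first by Theorem~\ref{Introthm1} together with $E>0$, the second since $\sqrt{-1}\alpha\wedge\bar\alpha\geq 0$ for any $(1,0)$-form $\alpha$. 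Hence $\sqrt{-1}\partial\bar\partial\log E\geq 0$ and $\log E(z)$ is plurisubharmonic.

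For strict plurisubharmonicity at $z_0$, I would argue by contradiction. Suppose $(\sqrt{-1}\partial\bar\partial\log E)(v,\bar v)=0$ for some nonzero $v\in T_{z_0}\mc{T}$. Then each of the two non-negative summands in the identity above must vanish at $(v,\bar v)$; in particular, $(\sqrt{-1}\partial\bar\partial(E^{-1}))(v,\bar v)=0$. I would then revisit the explicit pointwise integrand behind the proof of Theorem~\ref{Introthm1}: it is the sum of an $L^2$-type square and a term involving the Hermitian sectional curvature of $N$ evaluated along $du$ and along the infinitesimal deformation of $u$ associated with $v$. Strictly negative Hermitian sectional curvature makes this curvature contribution strictly positive wherever both $du$ and the deformation are nonzero. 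Since $du(z_0)\neq 0$ everywhere on $\mc{X}_{z_0}$ by hypothesis, vanishing of the integral forces the pointwise deformation to vanish identically, which in turn forces $v=0$, a contradiction.

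The primary obstacle is this strictness step: one must have the integrand underlying Theorem~\ref{Introthm1} sufficiently explicit to detect that, under strict negative Hermitian sectional curvature and $du$ nowhere vanishing on $\mc{X}_{z_0}$, only the trivial deformation can annihilate it. The plurisubharmonicity assertion itself is formal once Theorem~\ref{Introthm1} is established, requiring no further geometric input beyond the algebraic identity above.
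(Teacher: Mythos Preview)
Your argument for plurisubharmonicity is correct and is exactly the paper's: the identity
\[
\sqrt{-1}\p\b{\p}\log E = -E\,\sqrt{-1}\p\b{\p}(E^{-1}) + E^{-2}\sqrt{-1}\p E\wedge\b{\p}E
\]
expresses $\sqrt{-1}\p\b{\p}\log E$ as a sum of two non-negative $(1,1)$-forms, the first by Theorem~\ref{Introthm1}.

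The strictness argument, however, has a genuine gap. Vanishing of the curvature integrand under \emph{strictly} negative Hermitian sectional curvature does \emph{not} force the deformation $\xi^\alpha\frac{\delta u}{\delta z^\alpha}$ to vanish; it only forces the wedge $\frac{\p u}{\p v}\wedge \xi^\alpha\frac{\delta u}{\delta z^\alpha}$ to vanish, i.e.\ these two vectors in $u^*TN\otimes\mb{C}$ are pointwise linearly dependent. Since $u^i_v$ is nowhere zero one writes $\xi^\alpha\frac{\delta u^i}{\delta z^\alpha}=W^v u^i_v$ for some function $W^v$ on the fiber. This alone says nothing about $\xi$. One must then use the \emph{second} vanishing condition (the $L^2$-square you mention), namely $\n_{\xi^\alpha\frac{\delta}{\delta z^\alpha}}\b{\p}^V u=0$; combined with the identity $\n_{\frac{\delta}{\delta z^\alpha}}u^j_{\b{v}}=\n_{\b{v}}\frac{\delta u^j}{\delta z^\alpha}-A^v_{\alpha\b{v}}u^j_v$ and the harmonic equation $\n_{\b{v}}u^i_v=0$, this yields
\[
\xi^\alpha A^v_{\alpha\b{v}}=\p_{\b{v}}W^v,
\]
so the harmonic Beltrami differential representing $\rho(\xi)\in H^1(\mc{X}_{z_0},T\mc{X}_{z_0})$ is $\b{\p}$-exact. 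Injectivity of the Kodaira--Spencer map $\rho$ then gives $\xi=0$. Your sketch skips both the linear-dependence-versus-vanishing distinction and, more seriously, the passage through the Kodaira--Spencer map; even if the deformation $\xi^\alpha\frac{\delta u}{\delta z^\alpha}$ \emph{did} vanish identically, that would not by itself imply $\xi=0$ without this final step.
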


As a corollary, we obtain the following result of Toledo.
\begin{cor}[{\cite[Theorem 1, 3]{Toledo}}]\label{Introcor3}
Under the conditions of Theorem \ref{Introthm1}, the energy function $E(z)$ is plurisubharmonic. 
Moreover, if $(N,g)$ has strictly negative Hermitian sectional curvature and $d(u(z_0))$ is never zero on $\mc{X}_{z_0}$ for some $z_0\in \mc{T}$, then $E(z)$ is strictly plurisubharmonic at $z_0$.
\end{cor}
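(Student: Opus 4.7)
The plan is to derive the corollary directly from Theorem \ref{Introthm2} via the standard identity relating the complex Hessians of a positive function and its logarithm, exploiting the fact that $\exp$ is convex and increasing. First I would observe that under the standing hypotheses we may assume $E(z)>0$ on all of $\mc{T}$, since if $E(z_0)=0$ at some $z_0$ then $u(z_0)$ is constant; in that (trivial) case $u(z)$ is forced to be constant for every $z$, making $E\equiv 0$, and the plurisubharmonicity assertion is vacuous. Hence $\log E(z)$ is a well-defined smooth function on $\mc{T}$, to which Theorem \ref{Introthm2} applies.

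Next I would use the pointwise identity
\begin{equation*}
\sqrt{-1}\,\partial\bar\partial E \;=\; E\cdot\sqrt{-1}\,\partial\bar\partial\log E \;+\; \frac{\sqrt{-1}\,\partial E\wedge\bar\partial E}{E},
\end{equation*}
valid wherever $E>0$. By Theorem \ref{Introthm2} the first term on the right is a non-negative $(1,1)$-form (since $E>0$), and the second term is manifestly non-negative as it is a positive multiple of $\sqrt{-1}\,\partial E\wedge\bar\partial E$. Therefore $\sqrt{-1}\,\partial\bar\partial E\geq 0$ on $\mc{T}$, which gives the plurisubharmonicity of $E(z)$.

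For the strict statement, if $(N,g)$ has strictly negative Hermitian sectional curvature and $d(u(z_0))$ is nowhere zero on $\mc{X}_{z_0}$, then Theorem \ref{Introthm2} yields $\sqrt{-1}\,\partial\bar\partial\log E|_{z_0}>0$ as a Hermitian form. Since $E(z_0)>0$ and the second summand is non-negative, the sum $\sqrt{-1}\,\partial\bar\partial E|_{z_0}$ is strictly positive, giving strict plurisubharmonicity at $z_0$. The only real point to check is the positivity $E(z)>0$, which is automatic as soon as $u_0$ is not null-homotopic; there is no substantive analytic obstacle here, as the argument is purely algebraic once Theorem \ref{Introthm2} is in hand.
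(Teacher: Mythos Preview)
Your argument is correct and is essentially the same as the paper's: both use the identity $\sqrt{-1}\,\p\b{\p}E = E\,\sqrt{-1}\,\p\b{\p}\log E + E^{-1}\sqrt{-1}\,\p E\wedge\b{\p}E$ and then invoke Theorem~\ref{Introthm2}. The paper omits your preliminary discussion of why $E(z)>0$ (it implicitly assumes this), but otherwise the proofs are identical.
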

The (strict) plurisubharmonicity of
energy function
is proved in  \cite{Toledo}
by using a formula of Micallef-Moore \cite{MM}.
More precisely, let $D$ be a small disk in $\mb{C}$ centered at $0$,
and let
$J=J(s,t)$
be a family of complex structures on $\Sigma$ compatible with the orientation and depending holomorphically on the complex parameter $z=s+\sqrt{-1}t\in D$. Then $E(z)=E(s,t)=E(J(s,t))$, and the complex variation can be obtain from the real variation, i.e.,
\begin{align*}
\Delta E(0)=\frac{\p^2 E}{\p s^2}|_{z=0} +\frac{\p^2 E}{\p t^2}|_{z=0}, 
\end{align*}
where $\Delta=4\p_z\p_{\b{z}}$. The family $J=J(s,t)$
satisfies certain Cauchy-Riemann equation
\cite{Toledo} and the variation can be computed in terms of $J$.
Our method is completely different from Toledo's.
We shall treat the energy function as the push-forward of a differential
form on Teichm\"uller curve $\mc{X}$, by using the canonical
decomposition of the holomorphic cotangent bundle $T^*\mc{X}$, and we
obtain a precise and somewhat more concrete formula on the second variation of energy function. 
 
We proceed to explain further details of our results and methods.
Let
$u(z):=(\mc{X}_z,\Phi_z)\to (N,g)$ be a family of  harmonic maps considered
as a smooth map $u:\mc{X}\to N$.
Let 
$(z;v)=(z^1,\cdots, z^m; v)$
be  local holomorphic coordinates of $\mc{X}$ with $\pi(z,v)=z$, where
$(z)$ denotes the local coordinates of $\mc{T}$ and $(v)$
denotes the local coordinates of Riemann surface $\mc{X}_z$,
$m=3g-3=\dim_{\mathbb C} \mc{T}$. Note that $du\in A^1(\mc{X}, u^*TN)$ can be decomposed as 
\begin{align*}
du=\p u+\b{\p}u\in A^1(\mc{X}, u^*TN),	
\end{align*}
where $\p u$
denotes the $(1,0)$-component of $du$ and $\b{\p}u=\o{\p u}$ denotes the $(0,1)$-component of $du$. Let 
$	\langle \p u\wedge \b{\p}u\rangle$
 denote the (1,1)-form on $\mc{X}$ obtained by combining the wedge
 product in $\mc{X}$ with the Riemannian metric $\langle \cdot, \cdot\rangle$ on $u^*TN$.
Then the energy function $E(z)$ can be expressed as 
\begin{align*}
E(z)=\sqrt{-1}\int_{\mc{X}/\mc{T}}\langle \p u\wedge \b{\p}u\rangle;
\end{align*}
see (\ref{2.1}) below.
Here $\int_{\mc{X}/\mc{T}}$
 denotes the integral along fibers. Then the first and the second variations of energy function are given by 
 \begin{align*}
\p E(z)=\sqrt{-1}\int_{\mc{X}/\mc{T}}\p\langle \p u\wedge \b{\p}u\rangle,\quad \p \b{\p}E(z)=\sqrt{-1}\int_{\mc{X}/\mc{T}}\p\b{\p}\langle \p u\wedge \b{\p}u\rangle.
\end{align*}
The holomorphic cotangent bundle $T^*\mc{X}$ has the following decomposition:
\begin{align*}
T^*\mc{X}=\mc{H}^*\oplus \mc{V}^*,	
\end{align*}
where $\mc{H}^*$ and $\mc{V}^*$ are defined in (\ref{HV*}). By using the above decomposition,
the first and second variational formulas are obtained as  follows;
see  Subsection \ref{subsec1.3} for the definition of the connection
$\n$ and the notations.

\begin{thm}\label{Introthm4}
The first variation of energy is 
\begin{align*}
\frac{\p E(z)}{\p z^\alpha}&=\int_{\mc{X}/\mc{T}}\sqrt{-1}\langle \p^Vu\wedge \n_{\frac{\delta}{\delta z^\alpha}}\b{\p}^Vu\rangle\\
&=-\langle A_{\alpha},du\rangle,
\end{align*}
where $A_{\alpha}=A^v_{\alpha\b{v}}u^j_vd\b{v}\otimes \frac{\p}{\p x^j}\in A^1(\mc{X}_z, u^*TN)$,  $ A^v_{\alpha\b{v}}=\p_{\b{v}}(-\phi_{\alpha\b{v}}\phi^{v\b{v}})$. 
\end{thm}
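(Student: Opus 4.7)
The plan is to exploit the splitting $T^*\mc{X}=\mc{H}^*\oplus\mc{V}^*$, realize $E(z)$ as a fiber integral of a purely vertical $(1,1)$-form, differentiate under the integral along the horizontal lift $\frac{\delta}{\delta z^\alpha}$ of $\frac{\p}{\p z^\alpha}$, and use the fiberwise harmonic map equation to eliminate one of the resulting Leibniz terms; the remaining term will be identified with $-\langle A_\alpha,du\rangle$ because the Kodaira--Spencer coefficient $A^v_{\alpha\b v}=\p_{\b v}(-\phi_{\alpha\b v}\phi^{v\b v})$ is precisely what measures the non-commutativity of $\frac{\delta}{\delta z^\alpha}$ with $\frac{\p}{\p\b v}$.

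Since fiber integration extracts the $dv\w d\b v$ part of a $(1,1)$-form on $\mc{X}$, one has $E(z)=\sqrt{-1}\int_{\mc{X}/\mc{T}}\langle\p^Vu\w\b{\p}^Vu\rangle$ with $\p^Vu=u^j_v\,dv\otimes\p_{x^j}$ and $\b{\p}^Vu=u^j_{\b v}\,d\b v\otimes\p_{x^j}$. Differentiating under the integral along $\frac{\delta}{\delta z^\alpha}$ and applying the Leibniz rule for the pullback connection $\n$ on $u^*TN$ gives
\begin{align*}
\frac{\p E}{\p z^\alpha}=\sqrt{-1}\int_{\mc{X}/\mc{T}}\Big[\langle\n_{\delta/\delta z^\alpha}\p^Vu\w\b{\p}^Vu\rangle+\langle\p^Vu\w\n_{\delta/\delta z^\alpha}\b{\p}^Vu\rangle\Big].
\end{align*}
For the first equality I invoke the harmonic map equation $\n_v u_{\b v}=0$ on each fiber, together with the symmetry of $\n$ on $u^*TN$, to recast the first integrand as a fiberwise $\b{\p}$-exact $(1,1)$-form; its integral over the closed fiber $\mc{X}_z$ then vanishes by Stokes.

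For the second equality, I expand $\n_{\delta/\delta z^\alpha}\b{\p}^Vu$ in coordinates. Because $\frac{\delta}{\delta z^\alpha}$ is the $\phi$-horizontal lift, the commutator $\big[\frac{\delta}{\delta z^\alpha},\frac{\p}{\p\b v}\big]$ is proportional to $A^v_{\alpha\b v}\frac{\p}{\p v}$, so the derivative of $\b{\p}^Vu=u^j_{\b v}\,d\b v\otimes\p_{x^j}$ along $\frac{\delta}{\delta z^\alpha}$ picks up an extra vertical piece with coefficient $A^v_{\alpha\b v}u^j_v$. Pairing against $\p^Vu$ and integrating $dv\w d\b v$ over the fiber then reassembles to $-\langle A_\alpha,du\rangle$ with $A_\alpha=A^v_{\alpha\b v}u^j_v\,d\b v\otimes\p_{x^j}$, as defined. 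The main obstacle is the horizontal--vertical bookkeeping: correctly tracking how $\n_{\delta/\delta z^\alpha}$ acts on $\mc{V}^*$-valued sections so that the Kodaira--Spencer coefficient emerges with the right sign, and verifying that the harmonicity cancellation in the first equality indeed produces only an exact vertical form with no leftover horizontal contribution.
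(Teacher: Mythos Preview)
Your route to the first equality differs from the paper's. The paper writes $E(z)=\sqrt{-1}\int_{\mc{X}/\mc{T}}\langle\p u\wedge\b\p u\rangle$ with the \emph{full} $\p u$, $\b\p u$ on $\mc{X}$, uses the automatic commutation of the exterior operator $\p$ with fiber integration to get $\p E=\sqrt{-1}\int_{\mc{X}/\mc{T}}\p\langle\p u\wedge\b\p u\rangle$, and then identifies the $\delta v\wedge\delta\b v$-component of the integrand pointwise in normal coordinates (invoking harmonicity at the point to kill the $u^i_{v\b v}\,\delta v\wedge\delta\b v$ piece). No Leibniz rule for a connection ever enters.

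Your Leibniz formula
\[
\frac{\p E}{\p z^\alpha}=\sqrt{-1}\int_{\mc{X}/\mc{T}}\Big[\langle\n_{\delta/\delta z^\alpha}\p^Vu\wedge\b\p^Vu\rangle+\langle\p^Vu\wedge\n_{\delta/\delta z^\alpha}\b\p^Vu\rangle\Big]
\]
is not automatic, and this is the gap. Differentiating the fiber integral means taking the Lie derivative $L_{\delta/\delta z^\alpha}$ of the top form $g_{ij}u^i_v u^j_{\b v}\sqrt{-1}dv\wedge d\b v$; matching that to your two-term sum requires the identity $(\p\phi)\big(\tfrac{\delta}{\delta z^\alpha}\big)+\p_v a^v_\alpha=0$, i.e.\ the Chern-connection term on $\mc{V}^*$ must exactly cancel the divergence of the horizontal lift. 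This \emph{does} hold because of the normalization $e^\phi=\phi_{v\b v}$, but you neither state nor verify it. The same identity is what makes your ``symmetry'' step go through: torsion-freeness of the pullback Levi-Civita connection alone gives $\n^{TN}_{\delta/\delta z^\alpha}u^i_v-\n^{TN}_v\tfrac{\delta u^i}{\delta z^\alpha}=-(\p_v a^v_\alpha)u^i_v$, and it is precisely the extra $-(\p\phi)(\tfrac{\delta}{\delta z^\alpha})$ piece of the $\mc{V}^*$-connection that absorbs this commutator so that $\n_{\delta/\delta z^\alpha}u^i_v=\n_v\tfrac{\delta u^i}{\delta z^\alpha}$ with no residue. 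You flag ``horizontal--vertical bookkeeping'' as the obstacle but do not resolve it; the paper's exterior-derivative approach sidesteps this check entirely. (Minor: the exact form you obtain is fiberwise $\p$-exact, not $\b\p$-exact.)

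For the second equality your outline agrees with the paper: both expand $\n_{\delta/\delta z^\alpha}u^j_{\b v}=\n_{\b v}\tfrac{\delta u^j}{\delta z^\alpha}-A^v_{\alpha\b v}u^j_v$ and then use Stokes together with $\n_{\b v}u^i_v=0$ to discard the $\n_{\b v}\tfrac{\delta u^j}{\delta z^\alpha}$ contribution, leaving $-\langle A_\alpha,du\rangle$.
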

\begin{thm}\label{Introthm5}
	The second variation of energy is 
	\begin{multline*}
	\frac{\p^2E(z)}{\p z^\alpha\p\b{z}^\beta}=	-2\int_{\mc{X}/\mc{T}}R\left(\frac{\p u}{\p v},\frac{\delta u}{\delta z^\alpha},\frac{\p u}{\p \b{v}},\frac{\delta u}{\delta \b{z}^\beta}\right)\sqrt{-1}\delta v\wedge \delta\b{v}\\
	+2\int_{\mc{X}/\mc{T}}\langle \n_{\frac{\delta}{\delta\b{z}^\beta}}\p^Vu\wedge\n_{\frac{\delta}{\delta z^\alpha}}\b{\p}^Vu\rangle  .
	\end{multline*}
\end{thm}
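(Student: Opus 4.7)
I would compute $\partial_{\bar z^\beta}$ of the first-variation formula in Theorem~\ref{Introthm4},
\[
\frac{\partial E}{\partial z^\alpha}=\sqrt{-1}\int_{\mathcal{X}/\mathcal{T}}\langle \partial^V u\wedge \nabla_{\delta/\delta z^\alpha}\bar\partial^V u\rangle,
\]
treating the integrand as a vertical $(1,1)$-form on $\mathcal{X}$ of top degree along the fiber. Because fiber integration commutes with $\bar\partial$ on the base and the Levi--Civita connection $\nabla$ on $u^*TN$ is metric, differentiation under the integral sign and the Leibniz rule yield
\[
\frac{\partial^2 E}{\partial z^\alpha\partial\bar z^\beta}=\sqrt{-1}\int_{\mathcal{X}/\mathcal{T}}\!\Big\{\langle \nabla_{\delta/\delta\bar z^\beta}\partial^V u\wedge \nabla_{\delta/\delta z^\alpha}\bar\partial^V u\rangle+\langle \partial^V u\wedge \nabla_{\delta/\delta\bar z^\beta}\nabla_{\delta/\delta z^\alpha}\bar\partial^V u\rangle\Big\}.
\]
The first summand already contributes (one copy of) the desired quadratic term; a second copy will emerge from manipulating the second summand, explaining the factor $2$ in the statement.

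For the second summand, I would commute the two covariant derivatives, producing the curvature of $u^*TN$ evaluated on the horizontal lifts plus a Lie-bracket contribution:
\[
\nabla_{\delta/\delta\bar z^\beta}\nabla_{\delta/\delta z^\alpha}=\nabla_{\delta/\delta z^\alpha}\nabla_{\delta/\delta\bar z^\beta}+R^N\!\left(\tfrac{\delta u}{\delta\bar z^\beta},\tfrac{\delta u}{\delta z^\alpha}\right)+\nabla_{[\delta/\delta\bar z^\beta,\,\delta/\delta z^\alpha]}.
\]
Pairing the curvature piece with $\partial^V u=u^i_v\,\delta v\otimes\partial/\partial x^i$ and $\bar\partial^V u=u^j_{\bar v}\,\delta\bar v\otimes\partial/\partial x^j$ immediately gives the pointwise expression $R(\partial u/\partial v,\delta u/\delta z^\alpha,\partial u/\partial\bar v,\delta u/\delta\bar z^\beta)$ against $\sqrt{-1}\,\delta v\wedge\delta\bar v$, up to the curvature sign convention, matching the first line of the theorem.

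The remaining task is to identify the contribution of $\nabla_{\delta/\delta z^\alpha}\nabla_{\delta/\delta\bar z^\beta}\bar\partial^V u$ together with the Lie-bracket term $\nabla_{[\delta/\delta\bar z^\beta,\,\delta/\delta z^\alpha]}\bar\partial^V u$. Here I would exploit (i) the harmonic-map equation, which along the fiber says $\nabla_{\partial/\partial\bar v}\partial^V u=0$ (equivalently $\nabla_{\partial/\partial v}\bar\partial^V u=0$), and (ii) integration by parts along the closed fiber $\mathcal{X}_z$. The Kodaira--Spencer vector $[\delta/\delta\bar z^\beta,\delta/\delta z^\alpha]$ is vertical and should be expressible through the representatives $A_\alpha$, $\overline{A_\beta}$ appearing in Theorem~\ref{Introthm4}; combining this with the harmonic equation lets one rewrite the resulting fiber integral as a second copy of $\int\langle \nabla_{\delta/\delta\bar z^\beta}\partial^V u\wedge \nabla_{\delta/\delta z^\alpha}\bar\partial^V u\rangle$, producing the factor $2$.

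\textbf{Main obstacle.} The delicate step is (iii): untangling the vertical commutator $[\delta/\delta\bar z^\beta,\delta/\delta z^\alpha]$ and the reversed double derivative $\nabla_{\delta/\delta z^\alpha}\nabla_{\delta/\delta\bar z^\beta}\bar\partial^V u$ so that, after integration by parts on $\mathcal{X}_z$ and application of the harmonic-map equation, they collapse exactly to the missing copy of the quadratic term without spurious residues. Keeping the bidegrees of the connection $1$-forms straight and reconciling the sign convention for $R$ so that the combinatorial factors of $2$ in both lines of Theorem~\ref{Introthm5} match will be the main bookkeeping hurdle.
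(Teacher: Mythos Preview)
Your route differs from the paper's, and as written it has a real gap.

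The paper does \emph{not} differentiate the first-variation formula. Instead it goes back to
\[
\p\b{\p}E(z)=\sqrt{-1}\int_{\mc{X}/\mc{T}}\p\b{\p}\langle \p u\wedge \b{\p}u\rangle
\]
and computes the $(\delta v\wedge\delta\b{v})$-component of the $4$-form $\p\b{\p}\langle \p u\wedge \b{\p}u\rangle$ directly (Lemma~\ref{lemma4}). In normal coordinates on $N$ at $u(z_0,v_0)$ one has $\p\b{\p}\langle \p u\wedge \b{\p}u\rangle=\p\b{\p}g_{ij}\wedge\p u^i\wedge\b{\p}u^j-(\p\b{\p}u^i)^2$; the harmonic equation kills the $\delta v\wedge\delta\b{v}$-part of $\p\b{\p}u^i$, and expanding with Lemma~\ref{lemma2} makes both the curvature term and the factor $2$ fall out at once from the cross term of $(\p\b{\p}u^i)^2$ and the four-index symmetrization of $\p_k\p_l g_{ij}$. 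There are no commutators of horizontal covariant derivatives, no fiber integration by parts, and no Lie-bracket bookkeeping.

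In your approach the commutator step is incomplete. The operator $\n$ acts on $\b{\mc{V}}^*\otimes u^*TN$, so
\[
[\n_{\delta/\delta\b{z}^\beta},\n_{\delta/\delta z^\alpha}]-\n_{[\delta/\delta\b{z}^\beta,\delta/\delta z^\alpha]}
= R^{\b{\mc{V}}^*}\!\left(\tfrac{\delta}{\delta\b{z}^\beta},\tfrac{\delta}{\delta z^\alpha}\right)\otimes\mathrm{Id}+\mathrm{Id}\otimes u^*R^N\!\left(\tfrac{\delta}{\delta\b{z}^\beta},\tfrac{\delta}{\delta z^\alpha}\right),
\]
and the line-bundle piece is not zero: by Lemma~\ref{lemma0} its horizontal part is governed by $c(\phi)_{\alpha\b{\beta}}$. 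You only keep $R^N$, so an extra term proportional to $\int_{\mc{X}_z}c(\phi)_{\alpha\b{\beta}}\,g_{ij}u^i_v u^j_{\b{v}}\,\sqrt{-1}\,dv\wedge d\b{v}$ is unaccounted for. It may well cancel against the bracket and reversed-derivative contributions, but nothing in your outline shows this.

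The ``second copy'' mechanism is likewise unjustified. After commuting you are left with $\langle\p^V u\wedge \n_{\delta/\delta z^\alpha}\n_{\delta/\delta\b{z}^\beta}\b{\p}^V u\rangle$ together with a bracket term in a \emph{vertical} direction with both $\p/\p v$ and $\p/\p\b{v}$ components. You assert that harmonicity plus a fiber integration by parts turns this into another copy of $\int\langle\n_{\delta/\delta\b{z}^\beta}\p^V u\wedge\n_{\delta/\delta z^\alpha}\b{\p}^V u\rangle$, but the derivatives you would need to move are horizontal, not vertical, so ordinary Stokes on $\mc{X}_z$ does not apply directly; and the harmonic equation $\n_{\b{v}}u^i_v=0$ controls \emph{vertical} derivatives, not the reversed horizontal pair. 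Without a concrete identity replacing this step, the argument does not close. The paper's normal-coordinate computation sidesteps all of this.
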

By using Cauchy-Schwarz inequality,  
for any $\xi=\xi^\alpha\frac{\p}{\p z^\alpha}\in T_z\mc{T}$, one has
	\begin{align*}
	\left|\xi^\alpha\frac{\p E(z)}{\p z^\alpha}\right|^2\leq E(z)\cdot \int_{\mc{X}/\mc{T}}\langle \n_{\b{\xi}^\beta\frac{\delta}{\delta\b{z}^\beta}}\p^Vu\wedge\n_{\xi^\alpha\frac{\delta}{\delta z^\alpha}}\b{\p}^Vu\rangle, 
	\end{align*}
see Lemma \ref{lemma5}. If $(N,g)$ has non-positive Hermitian sectional curvature, then
\begin{align*}
	\frac{\p^2 E(z)^{-1}}{\p z^\alpha\p\b{z}^\beta}\xi^\alpha\b{\xi}^\beta=-\frac{1}{E^2}\left(\frac{\p^2E(z)}{\p z^\alpha\p\b{z}^\beta}-\frac{2}{E}\frac{\p E(z)}{\p z^\alpha}\frac{\p E(z)}{\p\b{z}^\beta}\right)\xi^\alpha\b{\xi}^\beta\leq 0,
\end{align*}
which completes the proof of Theorem \ref{Introthm1}. By using the following two identities 
\begin{align*}
\sqrt{-1}\p\b{\p}\log E=-E\sqrt{-1}\p\b{\p}E^{-1}+E^{-2}\sqrt{-1}\p E\wedge \b{\p}E	
\end{align*}
and 
\begin{align*}
	\sqrt{-1}\p\b{\p}E =E\sqrt{-1}\p\b{\p}\log E+E^{-1}\sqrt{-1}\p E\wedge \b{\p}E,
	\end{align*} 
we obtain the plurisubharmonicity of $\log E(z)$ and $E(z)$.

As applications  we find that
the second variation of logarithmic  energy function is related to the Weil-Petersson metric. More precisely
\begin{thm}\label{Introthm6}
Let $(N,h)$ be a Hermitian manifold and fix a smooth map $u_0: \Sigma_g\to N$. If there is a unique harmonic map $u(z):(\mc{X}_z,\Phi_z)\to (N, g=\text{Re}\ h)$ in the homotopy class $[u_0]$ for each $z\in\mc{T}$, moreover $u(z_0)$ is  holomorphic (resp. anti-holomorphic) and totally geodesic
on $\mc{X}_{z_0}$, then 
	\begin{align*}
	\sqrt{-1}\p\b{\p}\log E(z)|_{z=z_0}=\frac{\omega_{WP}}{2\pi(g-1)}.	
	\end{align*}
\end{thm}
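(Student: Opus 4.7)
The plan is to apply the first and second variation formulas from Theorems \ref{Introthm4} and \ref{Introthm5} at $z=z_0$ and exploit the holomorphicity and totally geodesic conditions to reduce both variations to a multiple of the Weil--Petersson pairing.

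First I would establish that the first variation vanishes at $z_0$. By Theorem \ref{Introthm4}, $\partial E/\partial z^\alpha=-\langle A_\alpha,du\rangle$, and after unwinding the pairing in local coordinates the integrand reduces to a multiple of $g_{jk}u^j_v u^k_v=g(\partial u/\partial v,\partial u/\partial v)$. Since $u(z_0)$ is holomorphic, $\partial u/\partial v$ lies in $T^{1,0}N$, and the complex-bilinear extension of the Riemannian metric $g=\operatorname{Re}h$ vanishes on $T^{1,0}N$: indeed $J$ is a $g$-isometry, so the $\pm i$-eigenspaces of $J$ are $g$-isotropic. Hence $\partial E(z_0)=0$, which together with the identity $\sqrt{-1}\partial\bar\partial\log E=\sqrt{-1}\partial\bar\partial E/E-\sqrt{-1}\partial E\wedge\bar\partial E/E^2$ gives
\[
\sqrt{-1}\partial\bar\partial\log E(z_0)=\frac{\sqrt{-1}\partial\bar\partial E(z_0)}{E(z_0)}.
\]

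Next I would simplify the second variation in Theorem \ref{Introthm5} at $z_0$. The totally geodesic condition $\nabla du=0$ on $\mathcal{X}_{z_0}$ annihilates the vertical contribution to the covariant derivatives of $\partial^V u$ and $\bar\partial^V u$, so the only surviving part of $\nabla_{\delta/\delta z^\alpha}\bar\partial^V u$ comes from the $z$-dependence of the splitting $T^*\mathcal{X}=\mathcal{H}^*\oplus\mathcal{V}^*$. This shift is precisely controlled by the harmonic Beltrami differential $A_\alpha$ (essentially the structure equation from Subsection 1.3), producing an algebraic identity of the form $\nabla_{\delta/\delta z^\alpha}\bar\partial^V u=-A^v_{\alpha\bar v}(\partial u/\partial v)\,d\bar v$ on $\mathcal{X}_{z_0}$. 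Substituting, the non-curvature integral in Theorem \ref{Introthm5} becomes a multiple of $\int_{\mathcal{X}_{z_0}}A^v_{\alpha\bar v}\overline{A^v_{\beta\bar v}}\,|\partial u/\partial v|^2_g\,idv\wedge d\bar v$, which up to the hyperbolic conformal factor is exactly the WP pairing of $A_\alpha$ and $A_\beta$, weighted by the pointwise energy density $|du|^2$. A parallel simplification of the curvature integrand $R(\partial u/\partial v,\delta u/\delta z^\alpha,\partial u/\partial\bar v,\delta u/\delta\bar z^\beta)$ via the Gauss equation for the totally geodesic complex submanifold $u(\mathcal{X}_{z_0})\subset N$ produces a compatible contribution, so that the full second variation assembles into $\sqrt{-1}\partial\bar\partial E(z_0)=|du|^2\,\omega_{WP}(z_0)$.

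Finally, from $\nabla du=0$ the scalar function $|du|^2$ is constant on the connected surface $\mathcal{X}_{z_0}$, so by Gauss--Bonnet applied to the hyperbolic metric, $E(z_0)=\tfrac{1}{2}|du|^2\operatorname{Area}(\mathcal{X}_{z_0})=2\pi(g-1)\,|du|^2$. Dividing $\sqrt{-1}\partial\bar\partial E(z_0)=|du|^2\,\omega_{WP}(z_0)$ by $E(z_0)=2\pi(g-1)\,|du|^2$ cancels the $|du|^2$ factor and yields $\sqrt{-1}\partial\bar\partial\log E(z_0)=\omega_{WP}/(2\pi(g-1))$, as claimed. The anti-holomorphic case is symmetric, obtained by complex conjugation. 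The main obstacle will be the middle step: precisely identifying $\nabla_{\delta/\delta z^\alpha}\bar\partial^V u$ and the curvature integrand as the expected algebraic expressions in $A_\alpha$ and $du$, since the vertical and horizontal contributions must conspire to give a clean multiple of $\omega_{WP}$ with no extraneous terms. This requires careful bookkeeping of the connection $\nabla$ on $u^*TN\otimes T^*\mathcal{X}$ together with the interplay between harmonicity, holomorphicity, and the totally geodesic structure at $z_0$.
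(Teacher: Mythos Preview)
Your opening and closing steps are correct: the vanishing of $\partial E(z_0)$ via the isotropy of $T^{1,0}N$ for the $\mathbb C$-bilinear extension of $g$ is exactly the paper's argument in complex coordinates, and the constancy of $|du|^2$ together with Gauss--Bonnet gives $E(z_0)=2\pi(g-1)|du|^2$ as you state. The genuine gap is in the middle step.

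The claim that the totally geodesic condition forces the pointwise identity $\nabla_{\delta/\delta z^\alpha}\bar\partial^V u=-A^v_{\alpha\bar v}(\partial u/\partial v)\,\delta\bar v$ is not justified. From (\ref{1.14}) one has $\nabla_{\delta/\delta z^\alpha}u^j_{\bar v}=-A^v_{\alpha\bar v}u^j_v+\nabla_{\bar v}(\delta u^j/\delta z^\alpha)$, and the hypothesis $\nabla du=0$ only says $\nabla_v u^i_v=0$ along the fiber $\mathcal X_{z_0}$; it is a purely tangential condition and says nothing about the horizontal variation $\delta u^j/\delta z^\alpha$ or its $\bar v$-derivative. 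Likewise, the curvature integrand in Theorem~\ref{Introthm5} involves the vector $\delta u/\delta z^\alpha$, which need not be tangent to the image $u(\mathcal X_{z_0})$, so the Gauss equation for a totally geodesic submanifold does not apply. Making this route work would require proving that the curvature term and the residual $\nabla_{\bar v}(\delta u/\delta z^\alpha)$ contributions cancel, which is at least as hard as the original claim.

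The paper bypasses Theorem~\ref{Introthm5} entirely here. In complex coordinates $s^i$ on $N$ the first variation reads $\partial E/\partial z^\alpha=-2\int_{\mathcal X_z}g_{i\bar j}u^i_v\overline{u^j_{\bar v}}\,A^v_{\alpha\bar v}\sqrt{-1}\,dv\wedge d\bar v$, and one differentiates this in $\bar z^\beta$. At $z_0$ holomorphicity gives $u^j_{\bar v}=0$, so only the term where the derivative hits $\overline{u^j_{\bar v}}$ survives. Then (\ref{1.14}) together with $\nabla_v A^v_{\alpha\bar v}=0$ (Lemma~\ref{lemma1}(i)) and an integration by parts produce the two terms in (\ref{1.15}): one is $2\int g_{i\bar j}u^i_v\overline{u^j_v}A^v_{\alpha\bar v}\overline{A^v_{\beta\bar v}}$, the other carries the factor $\nabla_v u^i_v$. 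It is \emph{here} that the totally geodesic condition enters, killing the second term. The first then factors as $|du|^2\,G_{\alpha\bar\beta}$ by the constancy of the energy density, and dividing by $E(z_0)$ yields the result. No curvature term ever appears.
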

\begin{cor}[{\cite[Theorem 2.6]{FT}}]\label{Introcor7}
 If $u(z_0)=Id: (\mc{X}_{z_0}, \Phi_{z_0})\to (\mc{X}_{z_0}, \Phi_{z_0)}$ is identity, then  	
\begin{align*}
\sqrt{-1}\p\b{\p}E(z)|_{z=z_0}=2\omega_{WP}.	
\end{align*}
\end{cor}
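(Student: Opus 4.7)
The plan is to obtain this as a direct corollary of Theorem~\ref{Introthm6} combined with the algebraic identity
\[
\sqrt{-1}\p\b{\p}E = E\,\sqrt{-1}\p\b{\p}\log E + E^{-1}\sqrt{-1}\p E\wedge\b{\p}E
\]
recorded just above the statement. The identity map $u(z_0)=\mathrm{Id}$ is tautologically both holomorphic and totally geodesic, so Theorem~\ref{Introthm6} immediately gives $\sqrt{-1}\p\b{\p}\log E(z)|_{z=z_0}=\omega_{WP}/(2\pi(g-1))$. The task therefore reduces to verifying two things at $z_0$: the value of $E(z_0)$, and the vanishing of $\p E(z_0)$ so that the wedge term in the identity drops out.

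For the value of the energy, the identity map satisfies $|du|^2\equiv 2$ pointwise, so by the definition (\ref{0.1}) and the Gauss--Bonnet theorem for the hyperbolic metric $\Phi_{z_0}$ on a surface of genus $g\geq 2$,
\[
E(z_0)=\int_{\mc{X}_{z_0}}d\mu_{\Phi_{z_0}}=\mathrm{Area}(\mc{X}_{z_0},\Phi_{z_0})=4\pi(g-1).
\]

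To handle the first-order term, I would argue that $z_0$ is a global minimum of $E$ on $\mc{T}$. Every harmonic map $u(z)\colon(\mc{X}_z,\Phi_z)\to(\mc{X}_{z_0},\Phi_{z_0})$ in the homotopy class $[\mathrm{Id}]$ has degree one, and with respect to the decomposition $du=\p u+\b{\p}u$ between Riemann surfaces one has the pointwise identities $\tfrac{1}{2}|du|^2=|\p u|^2+|\b{\p}u|^2$ and $u^*\omega_{\Phi_{z_0}}=(|\p u|^2-|\b{\p}u|^2)\,\omega_{\Phi_z}$, giving
\[
E(u(z))=2\int_{\mc{X}_z}|\b{\p}u|^2\,\omega_{\Phi_z}+\int_{\mc{X}_z}u^*\omega_{\Phi_{z_0}}\ge \deg(u)\cdot\mathrm{Area}(\mc{X}_{z_0},\Phi_{z_0})=4\pi(g-1).
\]
Equality is attained at $z=z_0$ by the identity map itself, so $z_0$ is a critical point of $E$, and in particular $\p E(z_0)=0$.

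Substituting these two facts into the displayed identity gives $\sqrt{-1}\p\b{\p}E|_{z_0}=E(z_0)\cdot\sqrt{-1}\p\b{\p}\log E|_{z_0}=4\pi(g-1)\cdot\omega_{WP}/(2\pi(g-1))=2\omega_{WP}$, which is the claim. The only nontrivial ingredient is the vanishing of $\p E$ at $z_0$; the degree-and-area argument above seems the cleanest route, but as an alternative one could read the same conclusion directly off Theorem~\ref{Introthm4} by substituting $u^j_v=\delta^j_v$ and $\b{\p}u=0$ at $z_0$ into the first variation formula and checking that the resulting integrand vanishes.
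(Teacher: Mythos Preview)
Your argument is correct and follows the same overall scheme as the paper: invoke Theorem~\ref{Introthm6} for the identity map, compute $E(z_0)=4\pi(g-1)$ by Gauss--Bonnet, show that the first variation vanishes at $z_0$, and combine via the identity relating $\p\b\p E$ and $\p\b\p\log E$. The paper carries this out in Section~\ref{sec3}: it verifies explicitly that $(\n_v u^i_v)(o)=0$ (totally geodesic) and reads off $\p E(z_0)=0$ directly from the first variation formula in complex coordinates (equation~(\ref{1.20}), which is precisely the ``alternative'' you sketch at the end).

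The one genuine difference is your primary argument for $\p E(z_0)=0$: you use the Lichnerowicz-type inequality $E(u)\ge \deg(u)\cdot\mathrm{Area}(\mc{X}_{z_0})$, with equality for holomorphic maps, to conclude that $z_0$ is a global minimum of $E$. This is a clean global/topological argument that avoids any local computation, whereas the paper's route is purely local via the first variation formula. Both are valid; your approach has the mild advantage of giving the stronger statement that $z_0$ is actually a minimum, while the paper's route stays entirely within the variational machinery already developed.
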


We consider also the harmonic maps $u: M\to \mathcal X_z$ from a Riemannian manifold
$M $ to Riemann surfaces
 $(\mc{X}_z,\Phi_z)$
as in \cite{KWZ}, but with further assumption that
$M$  is a K\"ahler.
The energy function $E(z)$ is again defined on Teichm\"uller space
\cite{KWZ, Yamada}. We show that the variation is again related to the
Weil-Petersson metric.
\begin{thm}\label{Introthm8}
Let $(M,\omega_g)$ be a compact K\"ahler manifold and fix a smooth map $u_0: M\to \Sigma_g$, let $E(z)$ denote the energy function of harmonic maps from $(M,g)$ to $(\mc{X}_z,\Phi_z)$ in the class $[u_0]$, where $g$ is the  Riemannian metric associated to $\omega_g$.	If $u(z_0)$ is holomorphic or anti-holomorphic for some $z_0\in\mc{T}$, then 
\begin{align*}
\sqrt{-1}\p\b{\p}\log E(z)|_{z=z_0}=\frac{\omega_{WP}}{2\pi(g-1)}.	
\end{align*}
\end{thm}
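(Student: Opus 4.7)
The plan is to apply the first and second variational formulas of \cite{KWZ} for the energy $E(z)$ of harmonic maps $u(z):M\to\mc{X}_z$ with K\"ahler source $(M,\omega_g)$, evaluate them at $z_0$ using the (anti-)holomorphicity hypothesis, and recognize the resulting fiber integral over $M$ as the Weil-Petersson form via fiber integration.

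\textbf{Step 1: First variation vanishes.} The first variational formula of \cite{KWZ} expresses $\p E(z)/\p z^\alpha$ as an integral over $M$ of a pairing involving $\b{\p}u$ and the harmonic Beltrami differential $\mu_\alpha$ representing $\p/\p z^\alpha\in T_{z_0}\mc{T}$. When $u(z_0)$ is holomorphic, $\b{\p}u(z_0)=0$ identically on $M$, so this integral vanishes; the anti-holomorphic case is symmetric. Hence $\p E(z_0)=\b{\p}E(z_0)=0$, and
\[\sqrt{-1}\p\b{\p}\log E=\frac{\sqrt{-1}\p\b{\p}E}{E}-\frac{\sqrt{-1}\p E\wedge\b{\p}E}{E^2}\]
reduces the claim to $\sqrt{-1}\p\b{\p}E(z_0)=\frac{E(z_0)}{2\pi(g-1)}\omega_{WP}$.

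\textbf{Step 2: Clean form of the second variation.} Feeding $\b{\p}u(z_0)=0$ into the second variational formula of \cite{KWZ} collapses the mixed coupling terms and the target-curvature contribution, and the residue takes the shape
\[\frac{\p^2 E}{\p z^\alpha\p\b{z}^\beta}(z_0)=2\int_M \langle\mu_\alpha,\mu_\beta\rangle(u)\,|\p u|^2\,\frac{\omega_g^{\,n}}{n!},\]
where $\langle\mu_\alpha,\mu_\beta\rangle$ denotes the pointwise pairing on $\mc{X}_{z_0}$ of the harmonic Beltrami differentials via the hyperbolic metric, viewed as a scalar function. The numerical factor $2$ is pinned down by specializing to $M=\mc{X}_{z_0}$, $u=\mathrm{Id}$, $n=1$ and matching with Corollary~\ref{Introcor7}.

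\textbf{Step 3: Fiber integration identifies $\omega_{WP}$.} Since $u:M\to\mc{X}_{z_0}$ is holomorphic and non-constant, the pointwise identity
\[u^*\omega_{\mc{X}_{z_0}}\wedge\frac{\omega_g^{\,n-1}}{(n-1)!}=|\p u|^2\,\frac{\omega_g^{\,n}}{n!}\]
combined with the projection formula $u_*(u^*\alpha\wedge\beta)=\alpha\wedge u_*\beta$ yields, for any smooth function $f$ on $\mc{X}_{z_0}$,
\[\int_M f(u)\,|\p u|^2\,\frac{\omega_g^{\,n}}{n!}=V\int_{\mc{X}_{z_0}}f\,\omega_{\mc{X}_{z_0}},\]
where $V:=\int_{u^{-1}(v)}\omega_g^{\,n-1}/(n-1)!$ is constant on the regular locus because the fibers are mutually homologous. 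Taking $f\equiv 1$ and using Gauss-Bonnet $\int_{\mc{X}_{z_0}}\omega_{\mc{X}_{z_0}}=4\pi(g-1)$ gives $V=E(z_0)/(4\pi(g-1))$, and taking $f=\langle\mu_\alpha,\mu_\beta\rangle$ yields
\[\frac{\p^2E}{\p z^\alpha\p\b{z}^\beta}(z_0)=\frac{E(z_0)}{2\pi(g-1)}\,g^{WP}_{\alpha\b{\beta}},\]
which together with Step~1 proves the theorem; the anti-holomorphic case is handled by exchanging $\p u$ and $\b{\p}u$.

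\textbf{Main obstacle.} The crux is Step~2: reducing the general second variation formula of \cite{KWZ} to the clean single integral with the correct constant. This hinges on using $\b{\p}u(z_0)=0$ and the harmonicity of $u(z_0)$ to annihilate every term not proportional to $|\p u|^2\,\langle\mu_\alpha,\mu_\beta\rangle$, and in particular on identifying the infinitesimal deformation $\p u(z)/\p z^\alpha|_{z_0}$ with a canonical section built from $\mu_\alpha$ along $u$, modulo contributions that vanish after integration by parts.
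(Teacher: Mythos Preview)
Your Step~1 and Step~3 are essentially the paper's argument, but Step~2 contains a genuine gap. The second variation formula from \cite{KWZ} actually reads
\[
\frac{\p^2E(z)}{\p z^{\alpha}\p\b{z}^{\beta}}=\frac{1}{2}\int_M c(\phi)_{\alpha\b{\beta}}|du|^2\,d\mu_g+\big\langle\big(Id-\n\big(\mc{L}-\mc{G}\mc{L}^{-1}\o{\mc{G}}\big)^{-1}\n^*\big)A_{\alpha},A_{\beta}\big\rangle,
\]
and feeding in $\b{\p}u(z_0)=0$ does \emph{not} reduce this to a single integral of $\langle\mu_\alpha,\mu_\beta\rangle(u)\,|\p u|^2$. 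Two separate issues arise. First, the leading term carries the geodesic curvature function $c(\phi)_{\alpha\b{\beta}}$ on $\mc{X}_{z_0}$, which is \emph{not} pointwise equal to the Beltrami pairing $A^v_{\alpha\b{v}}\o{A^v_{\beta\b{v}}}$; they are related only by $(\Box+1)c(\phi)_{\alpha\b{\beta}}=A^v_{\alpha\b{v}}\o{A^v_{\beta\b{v}}}$, so they agree only after integration over the fiber $\mc{X}_{z_0}$, not over $M$ a priori. Second, to collapse the operator term to $\langle A_\alpha,A_\beta\rangle$ you must prove $\n^*A_\alpha=0$ at $z_0$, and this does not follow from $\b{\p}u=0$ alone: the computation needs holomorphicity, the harmonic equation, \emph{and} the holomorphicity of the quadratic differential $\p_vA_{\alpha\b{v}\b{v}}=0$ (Lemma~\ref{lemma1}(i)). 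Your ``main obstacle'' paragraph suggests you intend to identify the Jacobi field $\p u/\p z^\alpha$ and integrate by parts, but that is not how the \cite{KWZ} formula simplifies; the Green's operator disappears precisely because $A_\alpha$ is coclosed.

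Once these two points are established, the correct evaluation at $z_0$ is the \emph{two-term} expression $\frac{1}{2}\int_M c(\phi)_{\alpha\b{\beta}}|du|^2\,d\mu_g+\langle A_\alpha,A_\beta\rangle$. Your fiber integration in Step~3 then applies to each term separately (using the two equivalent descriptions of $G_{\alpha\b{\beta}}$ in (\ref{1.19}) and (\ref{1.29})), and each contributes $\frac{\deg_{\omega_g}(u^*K_{\mc{X}_{z_0}})}{(n-1)!(2g-2)}G_{\alpha\b{\beta}}$; this is where the factor $2$ honestly comes from. Pinning down constants by specializing to $u=\mathrm{Id}$ and invoking Corollary~\ref{Introcor7} is a consistency check, not a proof.
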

As a corollary, we obtain
\begin{cor}\label{Introcor9}
If $M$ is a Riemann surface,  and   $u(z_0)$ is holomorphic or anti-holomorphic, then 
\begin{align*}
\sqrt{-1}\p\b{\p}E(z)|_{z=z_0}=	|\deg u(z_0)|\cdot 2\omega_{WP}.
\end{align*}
Here $\deg u(z_0)$ is the degree of $u(z_0)$.
\end{cor}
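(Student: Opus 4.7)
The plan is to derive the corollary directly from Theorem~\ref{Introthm8} combined with the identity
\[
\sqrt{-1}\p\b{\p}E=E\,\sqrt{-1}\p\b{\p}\log E+E^{-1}\sqrt{-1}\p E\wedge\b{\p}E
\]
already recorded in the introduction. The strategy has two independent ingredients: (i) show that $z_0$ is a critical point of $E$ so that the gradient term above vanishes at $z_0$, and (ii) compute $E(z_0)$ in closed form.

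For the first ingredient I would exploit that $M$ is a compact Riemann surface. The topological degree $\deg u(z)$ is constant along the continuous family in the fixed homotopy class, and each fiber $(\mc{X}_z,\Phi_z)$ is hyperbolic of genus $g$, so by Gauss--Bonnet $\mathrm{Area}(\mc{X}_z)=4\pi(g-1)$ is also constant. The classical pointwise bound $|\p u|^2+|\b{\p}u|^2\geq\bigl||\p u|^2-|\b{\p}u|^2\bigr|$, integrated against $u^*\omega_{\mc{X}_z}=(|\p u|^2-|\b{\p}u|^2)\,d\mu_M$, yields
\[
E(u(z))\;\geq\;|\deg u(z)|\cdot\mathrm{Area}(\mc{X}_z)\;=\;|\deg u(z_0)|\cdot 4\pi(g-1),
\]
with equality iff $u(z)$ is holomorphic or anti-holomorphic. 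By assumption equality holds at $z_0$, so $z_0$ is a global minimum of $E$; in particular $dE(z_0)=0$, i.e.\ $\p E(z_0)=\b{\p}E(z_0)=0$, and $E(z_0)=|\deg u(z_0)|\cdot 4\pi(g-1)$.

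Then I would evaluate the displayed identity at $z_0$: the gradient--wedge term drops out, and invoking Theorem~\ref{Introthm8} I compute
\[
\sqrt{-1}\p\b{\p}E(z)\big|_{z_0}=E(z_0)\cdot\sqrt{-1}\p\b{\p}\log E(z)\big|_{z_0}=|\deg u(z_0)|\cdot 4\pi(g-1)\cdot\frac{\omega_{WP}}{2\pi(g-1)}=|\deg u(z_0)|\cdot 2\omega_{WP},
\]
which is the asserted formula. I do not anticipate a serious obstacle; the main delicate step is bookkeeping of conventions, namely verifying that the paper's energy density $\tfrac12|du|^2$ is normalized so that a holomorphic $u\colon M\to\mc{X}_z$ between Riemann surfaces satisfies $\tfrac12|du|^2\,d\mu_M=u^*\omega_{\mc{X}_z}$ pointwise. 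Once this is confirmed the equality case of the area bound gives exactly $E(z_0)=|\deg u(z_0)|\cdot 4\pi(g-1)$, and the factors $4\pi(g-1)$ and $2\pi(g-1)$ cancel to leave the required factor of $2$.
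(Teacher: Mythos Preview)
Your argument is correct, but it takes a different route from the paper's proof. The paper does not pass back through Theorem~\ref{Introthm8}; instead it invokes directly the intermediate second-variation formula (which in the paper is equation~(\ref{1.33}))
\[
\frac{\p^2E(z)}{\p z^{\alpha}\p\b{z}^{\beta}}\Big|_{z=z_0}=\frac{1}{(n-1)!}\frac{\deg_{\omega_g}(u^*K_{\mc{X}_{z_0}})}{g-1}G_{\alpha\b{\beta}},
\]
specializes to $n=1$, and uses $\deg_{\omega_g}(u^*K_{\mc{X}_{z_0}})=\int_M u^*c_1(K_{\mc{X}_{z_0}})=\deg u(z_0)\cdot(2g-2)$ to read off the factor $2|\deg u(z_0)|$. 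In particular the paper never needs to argue that $z_0$ is a minimum of $E$, nor to compute $E(z_0)$ separately: the second variation of $E$ is obtained directly, without unpacking $\log E$.

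Your route is nonetheless valid and arguably more in the spirit of a ``corollary'' of Theorem~\ref{Introthm8}: you use only the statement of that theorem together with the classical energy--degree inequality $E(u)\geq |\deg u|\cdot\mathrm{Area}(\mc{X}_z)$ (equality iff $u$ is $\pm$holomorphic), which is not stated in the paper but is standard. The paper instead gets $\p E(z_0)=0$ from its first-variation formula~(\ref{1.24}) and gets $E(z_0)$ from the pullback computation~(\ref{1.34}); your minimization argument recovers both facts simultaneously by an elementary geometric argument specific to Riemann-surface domains. The tradeoff: the paper's proof is a one-line specialization of an already-established formula, while yours avoids referring to that intermediate formula at the cost of importing the energy--degree bound. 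Your bookkeeping is consistent with the paper's conventions, and the cancellation $4\pi(g-1)/\bigl(2\pi(g-1)\bigr)=2$ is exactly as you anticipated.
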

In particular, if $u(z_0)$ is the identity map, then 
$$ \sqrt{-1}\p\b{\p}E(z)|_{z=z_0}=2\omega_{WP},$$
which was proved by M. Wolf  \cite[Theorem 5.7]{Wolf1}.

\begin{rem}
	In many situations, the harmonic maps are $\pm$ holomorphic (i.e. holomorphic or anti-holomorphic) automatically. For example, 
	\begin{itemize}
	\item[(i)]	(Eells and Wood \cite{EW}) Let $X$ and $Y$ be compact Riemann surfaces and $f$ a harmonic map from $X$ to $Y$ with respect to some K\"ahler metrics. If $f$ satisfies the following condition then $f$ is $\pm$ holomorphic :
$$e(X)+|\deg f\cdot e(Y)|>0$$ 
where $e(X)$ and $e(Y)$ are the Euler numbers of $X$ and $Y$ respectively and $\deg(f)$ is the degree of the map $f:X\to Y$.
\item[(ii)] (Ono \cite{Ono}) If $(M^n,\omega)$ is a compact K\"ahler manifold with negative first Chern class and satisfies 
 \begin{align*}
 n|f^*c_1(N)\cdot c_1(M)^{n-1}[M]|>|c_1(M)^n[M]|,	
 \end{align*}
and $f$ is a harmonic from $M$ to  a compact hyperbolic Riemann $N$, then $f$ is $\pm$ holomorphic.
\item[(iii)] (Siu \cite{Siu}) Let $M$ and $N$ be compact K\"a{}hler manifolds and assume that $N$ has strongly negative curvature in the sense of Siu. Let $f$ be a harmonic map from $M$ to $N$ with respect to the K\"ahler metrics. If there is a point in $M$ where the rank of $df$ is greater than or equal to four, then $f$ is $\pm$ holomorphic.
\item[(iv)] (Siu and Yau \cite{SY}) Let $(M, h)$ be a compact K\"ahler manifold of dimension $n\geq 2$ with positive
holomorphic bisectional curvature. Then any energy minimizing map $f:\mb{P}^1\to M$ must be $\pm$ holomorphic.
	\end{itemize}
 \end{rem}
  
This article is organized as follows. In Section \ref{sec1}, we fix notations and recall
some basic facts on Teichm\"uller curve and harmonic maps.
In Section \ref{sec2}, we compute the first and the second variations
of the energy function (\ref{0.1}) and prove Theorem \ref{Introthm4},
\ref{Introthm5}. In Subsection \ref{subsec2.3} we  show the
plurisuperharmonicity of reciprocal energy and prove Theorem
\ref{Introthm1}, \ref{Introthm2} and Corollary \ref{Introcor3}. In the
last two sections, we study the relationship between the energy
function and the
Weil-Petersson metric, and prove Theorem \ref{Introthm6}, \ref{Introthm8} and Corollary \ref{Introcor7}, \ref{Introcor9}.

\section{Preliminaries}\label{sec1}
In this section, we shall fix the notations and recall some basic facts on Teichm\"uller curve and harmonic maps. The results in this section are well-known.
\subsection{Teichm\"uller curve}\label{subsec1.1}
Let $\mc{T}$ be Teichm\"uller space of a fixed surface of genus
$g\geq 2$. Let $\pi:\mc{X}\to \mc{T}$ be Teichm\"uller curve over Teichm\"uller space $\mc{T}$, namely the holomorphic family
of Riemann surfaces over $\mc{T}$,  the fiber $\mc{X}_z:=\pi^{-1}(z)$ being exactly the Riemann surface given by the complex structure $z\in\mc{T}$; see e.g. \cite[Section 5]{Ahlfors}. Denote by
$$(z;v)=(z^1,\cdots, z^m; v)$$
the local holomorphic coordinates of $\mc{X}$ with $\pi(z,v)=z$, where
$z=(z^1,\cdots z^m)$ denotes the local coordinates of $\mc{T}$ and $v$
denotes the local coordinates of Riemann surface $\mc{X}_z$,
$m=3g-3=\dim_{\mathbb C} \mc{T}$.  Let $K_{\mc{X}/\mc{T}}$ denote the
relative canonical line bundle over $\mc{X}$, i.e.,
$K_{\mc{X}/\mc{T}}|_{\mc{X}_z}
=K_{\mc{X}_z}$. The fibers $\mc{X}_z$ are equipped with hyperbolic metric 
\begin{align*}
\sqrt{-1}\phi_{v\b{v}}dv\wedge d\b{v}
\end{align*}
depending smoothly on the parameter $z$ and having negative constant curvature $-1$, namely, 
\begin{align}\label{1.1}
	\p_v\p_{\b{v}}\log\phi_{v\b{v}}=\phi_{v\b{v}},
\end{align}
where $\phi_{v\b{v}}:=\p_v\p_{\b{v}}\phi$.
From (\ref{1.1}), up to a scaling
function on $\mc{T}$  a metric (weight) $\phi$ on $K_{\mc{X}/\mc{T}}$ can
be chosen such that 
\begin{align}\label{1.2}
	e^{\phi}=\phi_{v\b{v}}.
\end{align}
For convenience,  we denote
$$\phi_{\alpha}:=\frac{\p \phi}{\p z^{\alpha}},\quad \phi_{\b{\beta}}:=\frac{\p \phi}{\p \b{z}^{\beta}},
\quad \phi_{v}:=\frac{\p \phi}{\p v},\quad \phi_{\b{v}}:=\frac{\p \phi}{\p \b{v}},$$
where $1\leq \alpha,\beta\leq m$.
 With respect to the $(1,1)$-form $\sqrt{-1}\p\b{\p}\phi$,
we have a canonical horizontal-vertical decomposition of $T\mc{X}$, $T\mc{X}=\mc{H}\oplus \mc{V}$, where
\begin{align}\label{HV}
\mc{H}=\text{Span}\left\{\frac{\delta}{\delta z^{\alpha}}=\frac{\p}{\p z^{\alpha}}+a^v_\alpha \frac{\p}{\p v}, 1\leq \alpha\leq m\right\},\quad \mc{V}=\text{Span}\left\{\frac{\p}{\p v}\right\},
\end{align}
where \begin{align}\label{1.3}
a^v_{\alpha}=-\phi_{\alpha\b{v}}\phi^{v\b{v}},
\end{align}
and $\phi^{v\b{v}}=(\phi_{v\b{v}})^{-1}$.
By duality,  $T^*\mc{X}=\mc{H}^*\oplus \mc{V}^*$, where 
\begin{align}\label{HV*}
\mc{H}^*=\text{Span}\left\{dz^{\alpha}, 1\leq \alpha\leq m\right\},\quad \mc{V}^*=\text{Span}\left\{\delta v=dv-a^v_{\alpha}dz^{\alpha}\right\}.
\end{align}
Moreover, the differential operators
\begin{align*}
\p^V=\frac{\p}{\p v}\otimes \delta v,\quad \p^H=\frac{\delta}{\delta z^{\alpha}}\otimes dz^{\alpha}, \quad \b{\p}^V=\frac{\p}{\p\bar v}\otimes \delta \bar v, \quad \b{\p}^H=\frac{\delta}{\delta \bar z^{\alpha}}\otimes d\bar z^{\alpha}
\end{align*}
are well-defined and satisfy 
\begin{align*}
d=\p+\b{\p},\quad \p=\p^H+\p^V,\quad \b{\p}=\b{\p}^V+\b{\p}^H	
\end{align*}
when acting on smooth functions of $\mc{X}$.
 The following two lemmas can be proved by direct computations.
\begin{lemma}[{\cite[Lemma 1.1]{Wan1}}]\label{lemma0}
  The $(1, 1)$-form $\sqrt{-1}\p\b{\p}\phi$ on $\mathcal X$
  has  the following horizontal-vertical decomposition:
\begin{align*}
  \sqrt{-1}\p\b{\p}\phi
  =c(\phi)+\sqrt{-1}\phi_{v\b{v}}\delta v\wedge \delta\b{v},	
\end{align*}
where $c(\phi)=\sqrt{-1}c(\phi)_{\alpha\b{\beta}}dz^{\alpha}\wedge d\b{z}^{\beta}$, $c(\phi)_{\alpha\b{\beta}}=\phi_{\alpha\b{\beta}}-\phi^{v\b{v}}\phi_{\alpha\b{v}}\phi_{v\b{\beta}}$. 
\end{lemma}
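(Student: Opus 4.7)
My plan is to prove this lemma by a direct change of basis from the coordinate one-forms $\{dz^\alpha,dv,d\bar z^\beta,d\bar v\}$ to the horizontal-vertical ones $\{dz^\alpha,\delta v,d\bar z^\beta,\delta\bar v\}$ defined in (\ref{HV*}). I would begin by writing
\[
\sqrt{-1}\p\b{\p}\phi = \sqrt{-1}\bigl(\phi_{\alpha\b{\beta}}\,dz^\alpha\w d\b z^\beta + \phi_{\alpha\b v}\,dz^\alpha\w d\b v + \phi_{v\b\beta}\,dv\w d\b z^\beta + \phi_{v\b v}\,dv\w d\b v\bigr),
\]
then substitute $dv = \delta v + a^v_\alpha\,dz^\alpha$ and $d\b v = \delta\b v + \overline{a^v_\beta}\,d\b z^\beta$, using (\ref{1.3}) so that $a^v_\alpha=-\phi_{\alpha\b v}\phi^{v\b v}$ and $\overline{a^v_\beta}=-\phi_{v\b\beta}\phi^{v\b v}$. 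Expanding and collecting by the four basis types reduces the claim to checking four coefficients.

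The vertical piece is immediate: only the term $\phi_{v\b v}\,dv\w d\b v$ contributes to $\delta v\w\delta\b v$, producing the coefficient $\phi_{v\b v}$ as asserted. The essential step is the vanishing of the mixed components. The coefficient of $dz^\alpha\w\delta\b v$ equals $\phi_{\alpha\b v}+a^v_\alpha\phi_{v\b v}$, and by the very definition of $a^v_\alpha$ this collapses to $\phi_{\alpha\b v}-\phi_{\alpha\b v}=0$. A symmetric computation gives $0$ for the $\delta v\w d\b z^\beta$ coefficient. This cancellation is in fact the geometric meaning of the horizontal lift in (\ref{HV}): $\delta/\delta z^\alpha$ was chosen precisely so as to be orthogonal to $\p/\p v$ under $\sqrt{-1}\p\b{\p}\phi$.

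Finally, the coefficient of $dz^\alpha\w d\b z^\beta$ accumulates four contributions, namely $\phi_{\alpha\b\beta}+\overline{a^v_\beta}\phi_{\alpha\b v}+a^v_\alpha\phi_{v\b\beta}+a^v_\alpha\overline{a^v_\beta}\phi_{v\b v}$. Substituting the expressions for $a^v_\alpha$ and $\overline{a^v_\beta}$ yields $\phi_{\alpha\b\beta}-2\phi^{v\b v}\phi_{\alpha\b v}\phi_{v\b\beta}+\phi^{v\b v}\phi_{\alpha\b v}\phi_{v\b\beta}=\phi_{\alpha\b\beta}-\phi^{v\b v}\phi_{\alpha\b v}\phi_{v\b\beta}$, which is exactly $c(\phi)_{\alpha\b\beta}$.

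There is no conceptual obstacle here: the lemma is a routine verification whose entire content is the choice of $a^v_\alpha$ in (\ref{1.3}). The only thing to watch is conjugation bookkeeping, in particular that the coefficient of $d\b v$ in the decomposition of the vertical coframe comes out to $\overline{a^v_\beta}=-\phi_{v\b\beta}\phi^{v\b v}$ rather than the index-swapped expression one might write by analogy, and that the repeated-index summations run only over the base indices $\alpha,\beta$.
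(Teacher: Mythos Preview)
Your proof is correct and is precisely the direct computation the paper has in mind: the paper does not actually spell out a proof of this lemma but cites \cite{Wan1} and remarks that it ``can be proved by direct computations,'' which is exactly the change-of-coframe calculation you carried out.
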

\begin{lemma}\label{lemma2}
For any smooth function $f$ on $\mc{X}$ we have 
\begin{multline*}
\p\b{\p}f=(f_{\alpha\b{\beta}}+f_{\alpha\b{v}}\o{a^v_\beta}+f_{v\b{\beta}}a^v_{\alpha}+f_{v\b{v}}a^v_{\alpha}\o{a^v_{\beta}})dz^{\alpha}\wedge d\b{z}^{\beta}+f_{v\b{v}}\delta v\wedge \delta \b{v}	\\
+\frac{\delta}{\delta z^{\alpha}}\left(\frac{\p f}{\p \b{v}}\right)dz^{\alpha}\wedge \delta\b{v}+\frac{\delta}{\delta\b{z}^{\beta}}\left(\frac{\p f}{\p v}\right)\delta v\wedge d\b{z}^{\beta}.
\end{multline*}
	
\end{lemma}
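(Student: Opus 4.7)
The plan is a direct computation: one simply rewrites $\p\b\p f$ in the adapted coframe $\{dz^\alpha,\delta v,d\b z^\beta,\delta\b v\}$ starting from the expression in the coordinate coframe $\{dz^\alpha,dv,d\b z^\beta,d\b v\}$, using the change-of-basis supplied by (\ref{HV*}).

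The first step is to write out
$$\p\b\p f = f_{\alpha\b\beta}\,dz^\alpha\wedge d\b z^\beta + f_{\alpha\b v}\,dz^\alpha\wedge d\b v + f_{v\b\beta}\,dv\wedge d\b z^\beta + f_{v\b v}\,dv\wedge d\b v,$$
which is just the expansion of $\p\b\p f$ in the ambient holomorphic coordinates. The second step is to substitute the inversions of the relations in (\ref{HV*}), namely $dv=\delta v+a^v_\alpha dz^\alpha$ and $d\b v=\delta\b v+\o{a^v_\beta}\,d\b z^\beta$, everywhere $dv$ or $d\b v$ appears. This produces nine wedge products which I would collect according to the four basis two-forms $dz^\alpha\wedge d\b z^\beta$, $dz^\alpha\wedge\delta\b v$, $\delta v\wedge d\b z^\beta$, and $\delta v\wedge\delta\b v$. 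The $dz^\alpha\wedge d\b z^\beta$ coefficient is then immediately $f_{\alpha\b\beta}+f_{\alpha\b v}\o{a^v_\beta}+f_{v\b\beta}a^v_\alpha+f_{v\b v}a^v_\alpha\o{a^v_\beta}$, and the pure vertical coefficient is $f_{v\b v}$, matching the claimed expression.

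The last step is to identify the two mixed coefficients with horizontal derivatives. Using the definition $\frac{\delta}{\delta z^\alpha}=\frac{\p}{\p z^\alpha}+a^v_\alpha\frac{\p}{\p v}$ from (\ref{HV}), one sees that the coefficient of $dz^\alpha\wedge\delta\b v$ is $f_{\alpha\b v}+a^v_\alpha f_{v\b v}=\frac{\delta}{\delta z^\alpha}\bigl(\frac{\p f}{\p\b v}\bigr)$, and symmetrically the coefficient of $\delta v\wedge d\b z^\beta$ is $\frac{\delta}{\delta\b z^\beta}\bigl(\frac{\p f}{\p v}\bigr)$.

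There is no genuine obstacle here: the computation is algebraic and linear in the substitution, with no derivatives falling on the coefficients $a^v_\alpha$. The only point requiring care is the bookkeeping of the nine terms generated by the two independent substitutions, together with keeping track of which indices are horizontal and which are vertical; Lemma \ref{lemma0}, which gives the analogous horizontal-vertical decomposition for $\p\b\p\phi$ (the case $f=\phi$), can serve as a sanity check on the coefficients.
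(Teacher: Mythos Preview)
Your proposal is correct and is exactly the direct computation the paper has in mind; the paper itself does not spell out the proof, merely noting that Lemmas \ref{lemma0} and \ref{lemma2} ``can be proved by direct computations.'' Your substitution-and-collect argument is the intended one, and your observation that no derivatives of $a^v_\alpha$ appear (so the bookkeeping is purely algebraic) is the key reason the calculation closes cleanly.
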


Consider the following tensor
 \begin{align}\label{A-pre}
\b{\p}^V\frac{\delta}{\delta z^{\alpha}}=(\p_{\b{v}}a^v_{\alpha}) \frac{\p}{\p v}\otimes \delta\b{v}\in A^0(\mc{X}, \text{End}(\mc{V})).
\end{align}
We denote its component and its dual 
with respect to the metric 
$\sqrt{-1}\phi_{v\b{v}}\delta v\wedge \delta \b{v}$ as  
\begin{align}\label{1.4}
A^v_{\alpha\b{v}}=\p_{\b{v}}a^v_\alpha=\p_{\b{v}}(-\phi^{v\b{v}}\phi_{\alpha\b{v}}),\quad A_{\alpha\b{v}\b{v}}=A^v_{\alpha\b{v}}\phi_{v\b{v}}.	 
\end{align}
Lemma \ref{lemma1} (i) below shows
that its restriction to each fiber
is a harmonic element
representing  the Kodaira-Spencer class $\rho(\frac{\p}{\p z^{\alpha}})$, where
$$\rho: T_z\mc{T}\to H^1(\mc{X}_z,T_{\mc{X}_z})$$ is
the Kodaira-Spencer map.

\begin{lemma}\label{lemma1}
\cite[Proposition 2, 3]{Sch}
The following identities hold:
\begin{enumerate}
  \item[(i)] $\p_vA_{\alpha\b{v}\b{v}}=0$;
  \item[(ii)] $(\Box+1)c(\phi)_{\alpha\b{\beta}}=A^{v}_{\alpha\b{v}}A^{\b{v}}_{\b{\beta}v}$ where $\Box=-\phi^{v\b{v}}\p_v\p_{\b{v}}$. 
\end{enumerate}
\end{lemma}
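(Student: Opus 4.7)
The plan is to derive both identities by direct computation using only the hyperbolic equation (\ref{1.1})--(\ref{1.2}) and its differential consequences. From $\phi_{v\b{v}}=e^{\phi}$ one obtains, by differentiating in each of $v,\b{v},z^{\alpha},\b{z}^{\beta}$ and commuting partial derivatives, the basic identities
\[
\phi_{v\b{v}v}=\phi_{v\b{v}}\phi_{v},\quad \phi_{v\b{v}\b{v}}=\phi_{v\b{v}}\phi_{\b{v}},\quad \phi_{v\b{v}\alpha}=\phi_{v\b{v}}\phi_{\alpha},\quad \phi_{v\b{v}\b{\beta}}=\phi_{v\b{v}}\phi_{\b{\beta}},
\]
together with their consequences $\p_{v}\phi^{v\b{v}}=-\phi^{v\b{v}}\phi_{v}$ and $\p_{\b{v}}\phi^{v\b{v}}=-\phi^{v\b{v}}\phi_{\b{v}}$. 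These are the only structural inputs needed.

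For (i), I would first reduce $A_{\alpha\b{v}\b{v}}=A^{v}_{\alpha\b{v}}\phi_{v\b{v}}$ to closed form. Expanding $A^{v}_{\alpha\b{v}}=\p_{\b{v}}(-\phi^{v\b{v}}\phi_{\alpha\b{v}})$ and substituting $\p_{\b{v}}\phi^{v\b{v}}=-\phi^{v\b{v}}\phi_{\b{v}}$ yields
\[
A_{\alpha\b{v}\b{v}}=\phi_{\b{v}}\phi_{\alpha\b{v}}-\phi_{\alpha\b{v}\b{v}}.
\]
Applying $\p_{v}$ to this and using $\phi_{\alpha v\b{v}}=\phi_{v\b{v}}\phi_{\alpha}$ together with $\phi_{\alpha v\b{v}\b{v}}=\p_{\b{v}}(\phi_{v\b{v}}\phi_{\alpha})=\phi_{v\b{v}}\phi_{\b{v}}\phi_{\alpha}+\phi_{v\b{v}}\phi_{\alpha\b{v}}$, the four resulting terms cancel in pairs, which gives (i).

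For (ii), after observing that complex conjugation yields the analogous closed form $A_{\b{\beta}vv}=\phi_{v}\phi_{v\b{\beta}}-\phi_{vv\b{\beta}}$, the claimed identity is equivalent (after multiplying by $-\phi_{v\b{v}}$ and using $A^{v}_{\alpha\b{v}}A^{\b{v}}_{\b{\beta}v}=(\phi^{v\b{v}})^{2}A_{\alpha\b{v}\b{v}}A_{\b{\beta}vv}$) to
\[
\p_{v}\p_{\b{v}}c(\phi)_{\alpha\b{\beta}}=\phi_{v\b{v}}\,c(\phi)_{\alpha\b{\beta}}-\phi^{v\b{v}}A_{\alpha\b{v}\b{v}}A_{\b{\beta}vv}.
\]
The plan is then to expand the left-hand side from $c(\phi)_{\alpha\b{\beta}}=\phi_{\alpha\b{\beta}}-\phi^{v\b{v}}\phi_{\alpha\b{v}}\phi_{v\b{\beta}}$ and apply $\p_{v}\p_{\b{v}}$ term by term, replacing every derivative of $\phi^{v\b{v}}$ or $\phi_{v\b{v}}$ via the identities listed above (in particular $\p_{\b{\beta}}(\phi_{v\b{v}}\phi_{\alpha})=\phi_{v\b{v}}\phi_{\b{\beta}}\phi_{\alpha}+\phi_{v\b{v}}\phi_{\alpha\b{\beta}}$ handles the cross term $\phi_{\alpha v\b{v}\b{\beta}}$). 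Every term produced will carry either a factor matching one in $\phi_{v\b{v}}c(\phi)_{\alpha\b{\beta}}=\phi_{v\b{v}}\phi_{\alpha\b{\beta}}-\phi_{\alpha\b{v}}\phi_{v\b{\beta}}$ or one matching the expansion of $\phi^{v\b{v}}(\phi_{\b{v}}\phi_{\alpha\b{v}}-\phi_{\alpha\b{v}\b{v}})(\phi_{v}\phi_{v\b{\beta}}-\phi_{vv\b{\beta}})$, so the regrouping is forced; dividing through by $-\phi_{v\b{v}}$ then yields the stated identity. The main obstacle is purely bookkeeping: eight or so intermediate terms arise from $\p_{v}\p_{\b{v}}(\phi^{v\b{v}}\phi_{\alpha\b{v}}\phi_{v\b{\beta}})$, and care must be taken to track the signs and the contractions with $\phi^{v\b{v}}$ so that the correct pairing with $A_{\alpha\b{v}\b{v}}A_{\b{\beta}vv}$ emerges.
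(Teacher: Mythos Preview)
The paper does not prove this lemma at all; it simply cites Schumacher \cite[Proposition 2, 3]{Sch} and moves on. So there is no proof in the paper to compare yours against, and your direct-computation approach is precisely the kind of argument one would expect if the result were to be verified in place.

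Your treatment of (i) is complete and correct: the reduction $A_{\alpha\b v\b v}=\phi_{\b v}\phi_{\alpha\b v}-\phi_{\alpha\b v\b v}$ follows from $\p_{\b v}\phi^{v\b v}=-\phi^{v\b v}\phi_{\b v}$, and applying $\p_v$ together with $\phi_{\alpha v\b v}=\phi_{v\b v}\phi_\alpha$ and $\phi_{\alpha v\b v\b v}=\phi_{v\b v}\phi_{\b v}\phi_\alpha+\phi_{v\b v}\phi_{\alpha\b v}$ indeed makes everything cancel. For (ii) your reformulation
\[
\p_v\p_{\b v}c(\phi)_{\alpha\b\beta}=\phi_{v\b v}\,c(\phi)_{\alpha\b\beta}-\phi^{v\b v}A_{\alpha\b v\b v}A_{\b\beta vv}
\]
is correct, and the expansion you describe does go through; the identity $\phi_{\alpha\b\beta v\b v}=\phi_{v\b v}(\phi_\alpha\phi_{\b\beta}+\phi_{\alpha\b\beta})$ handles the first term, and the remaining terms from $\p_v\p_{\b v}(\phi^{v\b v}\phi_{\alpha\b v}\phi_{v\b\beta})$ regroup exactly as you predict. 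The only caveat is that you have stopped short of actually carrying out this regrouping, so as written (ii) is a correct plan rather than a completed proof; but there is no conceptual gap, only the bookkeeping you flagged.
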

\begin{defn}
The Weil-Petersson metric $\omega_{WP}$ on Teichm\"uller space $\mc{T}$ is defined by 
\begin{align}\label{1.19}
\omega_{WP}=\sqrt{-1}G_{\alpha\b{\beta}}dz^{\alpha}\wedge d\b{z}^{\beta},\quad G_{\alpha\b{\beta}}(z)=\int_{\mc{X}_z}A^v_{\alpha\b{v}}\o{A^v_{\beta\b{v}}}\sqrt{-1}\phi_{v\b{v}}dv\wedge d\b{v}.	
\end{align}
\end{defn}
By Lemma \ref{lemma1} (ii) and Stokes' theorem, the Weil-Petersson
metric  can
also be expressed as 
\begin{align}\label{1.29}
	G_{\alpha\b{\beta}}(z)=\int_{\mc{X}_z}c(\phi)_{\alpha\b{\beta}}\sqrt{-1}\phi_{v\b{v}}dv\wedge d\b{v}.
\end{align}

\subsection{Harmonic maps
  from Riemann surfaces to
a Riemannian manifold}\label{subsec1.3}
 Let 
\begin{align*}
\Phi_z=\phi_{v\b{v}}(dv\otimes d\b{v}+d\b{v}\otimes dv)	
\end{align*}
denote the Riemannian metric on $\mc{X}_z$ associated to the fundamental $(1,1)$-form $$\sqrt{-1}\phi_{v\b{v}}dv\wedge d\b{v}=\sqrt{-1}\phi_{v\b{v}}(dv\otimes d\b{v}-d\b{v}\otimes dv).$$
Let $T\mc{X}_z$ be the holomorphic tangent bundle of $\mc{X}_z$ and $T_{\mb{C}}\mc{X}_z=T\mc{X}_z\oplus\o{T\mc{X}_z}$  the complex tangent bundle.
For any smooth map $u:  (\mc{X}_z,\Phi_z)\to (N,g)$
the differential
 $du$ is a section of the
bundle $T^*_{\mb{C}}\mc{X}_z\otimes u^*TN$. 
 Let $\{x^i\}_{1\leq i\leq \dim N}$ denote a local coordinate system of $N$
and $v$ the local complex coordinate on $\mc{X}_z$. Then
$du\in A^0(\mc{X}_z, T^*_{\mb{C}}\mc{X}_z\otimes u^*TN)$ is locally expressed as 
 \begin{align*}
 du=\frac{\p u^i}{\p v}dv\otimes \frac{\p}{\p x^i}+ \frac{\p u^i}{\p \b{v}}d\b{v}\otimes \frac{\p}{\p x^i}.	
 \end{align*}
 The energy density
and the energy are  defined by
\begin{align*}
	|du|^2:=(du,du)=2g_{ij}u^i_v u^j_{\b{v}}\phi^{v\b{v}},
\end{align*}
\begin{aligns}\label{energy}
E(u)&:=\frac{1}{2}\|du\|^2:=\frac{1}{2}\int_{\mc{X}_z} |du|^2d\mu_{\Phi_z}\\
&=\int_{\mc{X}_z} (g_{ij}u^i_v u^j_{\b{v}}\phi^{v\b{v}})\sqrt{-1}\phi_{v\b{v}}dv\wedge d\b{v}\\
&=\int_{\mc{X}_z} g_{ij}u^i_v u^j_{\b{v}}\sqrt{-1}dv\wedge d\b{v},
\end{aligns}
where $u_v^i:=\frac{\p u^i}{\p v}$
and
$d\mu_{\Phi_z}=\sqrt{-1}\phi_{v\b{v}}dv\wedge d\b{v}$ is the Riemannian volume form of $\Phi_z$.
The harmonic equation  is 
\begin{aligns}\label{harmonic}
\p_{\b{v}}u^i_v+\Gamma^i_{jk}u^j_v u^k_{\b{v}}=0;	
\end{aligns}
see e.g. \cite[(1.2.10)]{Xin}. Here
$$\Gamma^{k}_{ij}=\frac{1}{2}g^{kl}\left(\p_jg_{il}+ \p_i g_{jl}- \p_lg_{ij}\right)$$
denotes the Christoffel symbols on $(N, g)$.

Let $\{u(z)\}_{z\in\mc{T}}$ be a smooth family of harmonic maps
$u(z):(\mc{X}_z,\Phi_z)\to (N,g)$, $z\in \mc{T}$. We shall treat
it as  a smooth map $u$,
\begin{align*}
u: \mc{X}\to N,\quad (z,v)\mapsto u(z,v):=(u(z))(v).	
\end{align*}
 Note that $\mc{V}^*$ is a holomorphic line bundle over $\mc{X}$ with
 holomorphic frame $\{\delta v\}$, which is equipped with a Hermitian
 metric $(\phi_{v\b{v}}^{-1}=e^{-\phi})$, thus there is a natural
 induced connection $\n$ on $\mc{V}^*\otimes u^*TN$ from the Chern
 connection of $\mc{V}^*$ and the
 Levi-Civita connection of $TN$, i.e. for $X\in T_{\mathbb C}X$,
\begin{align*}
\n_X: 	A^0(\mc{X}, \mc{V}^*\otimes u^*TN)\to A^1(\mc{X}, \mc{V}^*\otimes u^*TN),
\end{align*}
By conjugation, we obtain a connection $\n$ on $\o{\mc{V}}^*\otimes u^*TN$.
More precisely for any $f=f^i_v\delta v\otimes \frac{\p}{\p x^i}+f^i_{\b{v}}\delta\b{v}\otimes\frac{\p}{\p x^i}\in A^0(\mc{X}, (\mc{V}^*\oplus\o{\mc{V}}^*)\otimes u^*TN)$ and any vector $X\in T_{\mb{C}}\mc{X}$
\begin{align*}
\n_Xf=(\n_X f^i_v)\delta v\otimes \frac{\p}{\p x^i}	+(\n_X f^i_{\b{v}})\delta\b{v}\otimes \frac{\p}{\p x^i},
\end{align*}
where
\begin{align*}
\n_X f^i_v:=X(f^i_v)+\Gamma^i_{kl}f_v^k X(u^l)-(\p\phi)(X)f^i_v	
\end{align*}
and 
\begin{align*}
\n_X f^i_{\b{v}}:=X(f^i_{\b{v}})+\Gamma^i_{kl}f^k_{\b{v}} X(u^l)-(\b{\p}\phi)(X)f^i_{\b{v}}.	
\end{align*}
Denote $\n_v:=\n_{\frac{\p}{\p  
    v}}$,$\n_{\b{v}}:=\n_{\frac{\p}{\p \b{v}}}$ for notational convinience.
In particular for $u^i_v\delta v\otimes \frac{\p}{\p x^i}\in A^0(\mc{X},
\mc{V}^*\otimes u^*TN)$ we have
\begin{align*}
\n_{\b{v}}(u^i_v\delta v\otimes \frac{\p}{\p x^i})=	(\n_{\b{v}}u^i_v)\delta v\otimes\frac{\p}{\p x^i},\quad \n_{\b{v}}u^i_v:=\p_{\b{v}}u^i_v+\Gamma^i_{jk}u^j_v u^k_{\b{v}}.
\end{align*}
By (\ref{harmonic}), $u$ is a harmonic map if and only if
\begin{align}\label{harmonic1}
\n_{\b{v}}u^i_v=0.	
\end{align}

\section{Variations of energy on Teichm\"uller space}\label{sec2}

In this section we will calculate the first and the second variations of 
the energy $E(u(z))$ for harmonic maps $u(z):\mc{X}_z\to N$.
 Fix a smooth map $u_0:\Sigma\to N$ from a surface $\Sigma$ of genus $g$ to $N$. We assume that $u:\mc{X}\to N$ is a smooth map such that $u(z):\mc{X}_z\to N$ is a harmonic map in the homotopy class $[u_0]$. Then the following 
 function
\begin{align*}
E(z):=E(u(z))	
\end{align*}
is smooth 
on Teichm\"uller space $\mc{T}$. 

\subsection{The first variation}\label{subsec2.1}
Consider a smooth map 
\begin{align*}
u:\mc{X}\to N,\quad (z,v)\mapsto u(z,v).
\end{align*}
Recall that
\begin{align*}
du=\p u+\b{\p}u\in A^1(\mc{X}, u^*TN),	
\end{align*}
with 
\begin{align*}
\p u:=\p u^i\otimes \frac{\p}{\p x^i}=(u^i_z dz+u^i_vdv)\otimes \frac{\p}{\p x^i}\in A^{1,0}(\mc{X}, u^*TN)	
\end{align*}
 the $(1,0)$-component of $du$, and $\b{\p}u=\o{\p u}$  the $(0,1)$-component of $du$. Let 
\begin{align}\label{1.6}
	\langle \p u\wedge \b{\p}u\rangle=g_{ij}(u(z,v)) \p u^i\wedge \b{\p} u^j\in A^{1,1}(\mc{X}) 
\end{align}
 denote the two-form on $\mc{X}$ obtained by combining the wedge product in $\mc{X}$ with the Riemannian metric $\langle ,\rangle$ on $u^*TN$. 
The corresponding 
the energy $E(z)$ function (\ref{energy}) can be written as 
\begin{align}\label{2.1}
E(z)=\sqrt{-1}\int_{\mc{X}/\mc{T}}\langle \p u\wedge \b{\p}u\rangle. 	
\end{align}
Here  we view
$\int_{\mc{X}/\mc{T}}$ as
\begin{align*}
	\int_{\mc{X}/\mc{T}}: A^{2+k}(\mc{X})\to A^k(\mc{T})
\end{align*}
 denotes the integral along fibers (see e.g. \cite[Section 2.1]{Sch}),
 and $\p$, $\b{\p}$-operators commute with $\int_{\mc{X}/\mc{T}}$. 

The variations of $E(z)$ are
\begin{align}\label{1.5}
\p E(z)=\sqrt{-1}\int_{\mc{X}/\mc{T}}\p\langle \p u\wedge \b{\p}u\rangle,\quad \p \b{\p}E(z)=\sqrt{-1}\int_{\mc{X}/\mc{T}}\p\b{\p}\langle \p u\wedge \b{\p}u\rangle.
\end{align}
Note that $\p\langle \p u\wedge \b{\p}u\rangle\in A^3(\mc{X})$, which
can be decomposed in terms of the frame $\wedge^3\{dz^{\alpha},
d\b{z}^{\beta},\delta v, \delta\b{v}\}$, we denote by $\left[\p\langle
  \p u\wedge \b{\p}u\rangle\right]^{(\delta v\wedge \delta \b{v})}$
the component of $\p\langle \p u\wedge \b{\p}u\rangle$
containing $\delta
v\wedge\delta\b{v}$. We shall need the following
\begin{lemma}\label{lemma3}
The  $\delta
v\wedge\delta\b{v}$-component is
\begin{align*}
	\left[\p\langle \p u\wedge \b{\p}u\rangle\right]^{(\delta v\wedge \delta \b{v})}=\langle \p^Vu\wedge \n_{\frac{\delta}{\delta z^\alpha}}\b{\p}^Vu\rangle\wedge dz^\alpha,	
	\end{align*}
	where 
	\begin{align*}
	\p^V u:=u^i_v\delta v\otimes\frac{\p}{\p x^i}\in A^0(\mc{X},\mc{V}^*\otimes u^*TN)\subset A^{1,0}(\mc{X}, u^*TN)	
	\end{align*}
and 
\begin{align*}
	\b{\p}^Vu:= u^i_{\b{v}}\delta\b{v}\otimes \frac{\p}{\p x^i} \in A^0(\mc{X},\o{\mc{V}}^*\otimes u^*TN)\subset A^{0,1}(\mc{X}, u^*TN).
\end{align*}
\end{lemma}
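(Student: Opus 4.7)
The plan is a direct local computation using the horizontal-vertical decomposition $T^*\mathcal X = \mathcal H^* \oplus \mathcal V^*$. Writing $dv = \delta v + a^v_\alpha dz^\alpha$ gives
\begin{align*}
\p u^i = u^i_v\, \delta v + \tfrac{\delta u^i}{\delta z^\alpha}\, dz^\alpha,\qquad \b{\p} u^j = u^j_{\b{v}}\,\delta \b{v} + \tfrac{\delta u^j}{\delta \b{z}^\beta}\, d\b{z}^\beta,
\end{align*}
so $\omega := \langle \p u \wedge \b{\p}u\rangle = g_{ij}\,\p u^i \wedge \b{\p}u^j$. Applying $\p$ with Leibniz and using $\p\p u^i = 0$ yields
\begin{align*}
\p\omega = (\p_k g_{ij})\, \p u^k \wedge \p u^i \wedge \b{\p}u^j - g_{ij}\, \p u^i \wedge \p\b{\p}u^j.
\end{align*}

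Next I would expand in the basis $\{dz^\alpha, \delta v, d\b{z}^\beta, \delta\b{v}\}$ and read off the $\delta v\wedge \delta\b{v}$ component. For $\p\b{\p}u^j$, Lemma \ref{lemma2} applied to $f = u^j$ provides the needed decomposition, in particular identifying the pure-vertical coefficient as $u^j_{v\b{v}}$ and the $dz^\alpha\wedge\delta\b{v}$ coefficient as $\delta u^j_{\b{v}}/\delta z^\alpha$. Retaining only wedge products that contain exactly one $\delta v$ and one $\delta \b{v}$, and tracking the antisymmetry of $\wedge$ carefully, produces an explicit coefficient of $dz^\alpha \wedge \delta v \wedge \delta\b{v}$ that is a sum of three terms involving $\p_k g_{ij}$, $\delta u^j_{\b{v}}/\delta z^\alpha$, and $u^j_{v\b{v}}$.

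On the other side, since $\delta/\delta z^\alpha$ is of type $(1,0)$ one has $(\b{\p}\phi)(\delta/\delta z^\alpha) = 0$, so by the definition of $\nabla$ in Subsection \ref{subsec1.3},
\begin{align*}
\nabla_{\delta/\delta z^\alpha} u^j_{\b{v}} = \tfrac{\delta u^j_{\b{v}}}{\delta z^\alpha} + \Gamma^j_{kl}\, u^k_{\b{v}}\, \tfrac{\delta u^l}{\delta z^\alpha},
\end{align*}
and the claimed right-hand side has coefficient $g_{ij} u^i_v\, \nabla_{\delta/\delta z^\alpha} u^j_{\b{v}}$. I would then verify the equality by substituting the metric compatibility relation $\p_k g_{ij} = g_{lj}\Gamma^l_{ki} + g_{il}\Gamma^l_{kj}$ together with the harmonic map equation (\ref{harmonic1}) in the form $u^j_{v\b{v}} = -\Gamma^j_{kl}\, u^k_v u^l_{\b{v}}$, and by relabelling dummy indices using the symmetry $\Gamma^l_{ki} = \Gamma^l_{ik}$.

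The main obstacle I expect is the index bookkeeping at the end: after inserting the compatibility relation, six Christoffel-containing terms appear and must be shown to organise into three cancelling pairs via renaming of summation indices. A quick sanity check on the flat target $N = \mathbb R$ with $\Gamma\equiv 0$ shows that harmonicity is genuinely needed in this step, which is consistent with the standing assumption of Section \ref{sec2} that $u(z)$ is harmonic on every fibre.
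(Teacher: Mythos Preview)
Your proposal is correct but takes a genuinely different route from the paper. The paper's proof fixes an arbitrary point $(z_0,v_0)\in\mc{X}$, chooses normal coordinates on $N$ around $u(z_0,v_0)$ so that $g_{ij}=\delta_{ij}$ and $dg_{ij}=0$ there, and computes both sides at that single point; with $\p_k g_{ij}$ and $\Gamma^l_{ij}$ vanishing, the only surviving piece of $\p\omega$ is $-\p u^i\wedge\p\b\p u^i$, and Lemma~\ref{lemma2} together with the harmonic equation (which now reads $u^i_{v\b v}=0$ at the point) gives the result immediately. Since both sides are globally defined and independent of the choice of normal coordinates, the identity holds on all of $\mc{X}$. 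Your approach instead works globally from the start, keeping the $\p_k g_{ij}$ and $u^j_{v\b v}$ terms and eliminating them via the metric-compatibility relation and the harmonic equation. This is more laborious---the index bookkeeping you flag is real, though it does close---but it makes the dependence on harmonicity fully explicit and avoids the two-step ``compute at a point in special coordinates, then invoke global well-definedness'' argument.
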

\begin{proof}
	For any fixed point $(z_0,v_0)\in \mc{X}$, we choose a normal coordinate system $\{x^i\}$ around $u(z_0,v_0)$ such that 
\begin{align}\label{normal}
g_{ij}(u(z_0,v_0))=\delta_{ij},\quad (dg_{ij})(u(z_0,v_0))=0.
\end{align}
From (\ref{harmonic}), one has $\p_{\b{v}}u^i_{v}(z_0,v_0)=0$.
  Lemma \ref{lemma2} implies that
\begin{multline}\label{1.7}
	\p\b{\p}u^i=(u_{\alpha\b{\beta}}+u_{\alpha\b{v}}\o{a^v_\beta}+u_{v\b{\beta}}a^v_{\alpha})dz^\alpha\wedge d\b{z}^\beta\\+\frac{\delta}{\delta z^\alpha}u^i_{\b{v}}dz^\alpha\wedge \delta\b{v}+\frac{\delta}{\delta\b{z}^\beta}u^i_v\delta v\wedge d\b{z}^\beta.
\end{multline}
	Thus at $(z_0,v_0)\in \mc{X}$, one has
	\begin{align*}
	\left[\p\langle \p u\wedge \b{\p}u\rangle\right]^{(\delta v\wedge \delta \b{v})} &=	\left[\p g_{ij}\wedge \p u^i\wedge \b{\p}u^j-g_{ij}\p u^i\wedge \p\b{\p}u^j\right]^{(\delta v\wedge \delta \b{v})}\\
	&=\left[-\p u^i\wedge \p\b{\p}u^i\right]^{(\delta v\wedge \delta \b{v})}\\
	&=\left[-\p u^i\wedge (\frac{\delta}{\delta z^\alpha}u^i_{\b{v}})dz^\alpha\wedge \delta\b{v}\right]^{(\delta v\wedge \delta \b{v})}\\
	&=g_{ij}u^i_{v} \o{\n_{\frac{\delta}{\delta \b{z}^\alpha}}u^j_{v}}\delta v\wedge \delta \b{v}\wedge dz^\alpha\\
	&=\langle \p^Vu\wedge \n_{\frac{\delta}{\delta z^\alpha}}\b{\p}^Vu\rangle\wedge dz^\alpha.
	\end{align*}
	Since both $\left[\p\langle \p u\wedge \b{\p}u\rangle\right]^{(\delta v\wedge \delta \b{v})}$ and $\langle \p^Vu\wedge \n_{\frac{\delta}{\delta z^\alpha}}\b{\p}^Vu\rangle\wedge dz^\alpha$ are globally defined,  independent of the normal coordinate system $\{x^i\}$, and the point $(z_0,v_0)$ is arbitrary, 
	\begin{align*}
		\left[\p\langle \p u\wedge \b{\p}u\rangle\right]^{(\delta v\wedge \delta \b{v})}=\langle \p^Vu\wedge \n_{\frac{\delta}{\delta z^\alpha}}\b{\p}^Vu\rangle\wedge dz^\alpha
	\end{align*}
on $\mc{X}$.
\end{proof}
\begin{thm}\label{thm2}
The first variation of energy is 
\begin{align*}
\frac{\p E(z)}{\p z^\alpha}=\int_{\mc{X}/\mc{T}}\sqrt{-1}\langle \p^Vu\wedge \n_{\frac{\delta}{\delta z^\alpha}}\b{\p}^Vu\rangle.
\end{align*}
\end{thm}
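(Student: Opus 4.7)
The plan is to start from the formula $\p E(z) = \sqrt{-1}\int_{\mc{X}/\mc{T}}\p\langle \p u\wedge \b{\p}u\rangle$ in (\ref{1.5}) and exploit Lemma \ref{lemma3}, which has already identified the ``vertical'' component of the $3$-form $\p\langle \p u\wedge \b{\p}u\rangle$ on $\mc{X}$. The key observation is that the fiber integration $\int_{\mc{X}/\mc{T}}\colon A^{k+2}(\mc{X})\to A^{k}(\mc{T})$ only sees components of the integrand that contain the vertical top form $\delta v\wedge \delta\b{v}$; all other terms in the decomposition of a $3$-form with respect to the global frame $\{dz^\alpha, d\b{z}^\beta, \delta v, \delta\b{v}\}$ push forward to zero, since the fiber is real two-dimensional.

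With this observation, the argument reduces to three short steps. First, decompose $\p\langle \p u\wedge \b{\p}u\rangle$ by type and discard every summand that does not contain the factor $\delta v\wedge\delta\b{v}$. Second, invoke Lemma \ref{lemma3} to replace the surviving piece by $\langle \p^V u\wedge \n_{\delta/\delta z^\alpha}\b{\p}^V u\rangle\wedge dz^\alpha$. Third, pull the horizontal factor $dz^\alpha$ out of the fiber integral to obtain
\[
\p E(z) \;=\; \left(\sqrt{-1}\int_{\mc{X}/\mc{T}}\langle \p^V u\wedge \n_{\delta/\delta z^\alpha}\b{\p}^V u\rangle\right)dz^\alpha,
\]
and then read off the coefficient of $dz^\alpha$ to arrive at the claimed expression for $\p E(z)/\p z^\alpha$.

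There is essentially no obstacle remaining at this stage: all of the analytic content has been absorbed into Lemma \ref{lemma3}, whose proof combines a pointwise normal-coordinate computation on $N$ with the harmonic map equation $\n_{\b{v}}u^i_v = 0$ (which is exactly what kills the pure $\delta v\wedge \delta\b{v}$ term of $\p\b{\p}u^i$ appearing in (\ref{1.7})). The present theorem is therefore the bookkeeping step that converts that local pointwise identity into a global formula for $\p E/\p z^\alpha$ via fiber integration over the Teichm\"uller curve; the only thing worth a brief comment in the write-up is the independence of $dz^\alpha$ from the fiber variable, which justifies moving it outside $\int_{\mc{X}/\mc{T}}$.
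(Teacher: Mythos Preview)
Your proposal is correct and follows essentially the same route as the paper's own proof: start from (\ref{1.5}), observe that fiber integration over the curve retains only the $\delta v\wedge\delta\b{v}$ component of the $3$-form, apply Lemma~\ref{lemma3} to identify that component, and then read off the coefficient of $dz^\alpha$. The paper presents this in three lines without the extra commentary you added on why only the vertical component survives and why $dz^\alpha$ can be pulled out, but those remarks are harmless clarifications rather than a different argument.
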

	\begin{proof}
	By (\ref{1.5}) and 	Lemma \ref{lemma3}, 
	\begin{align*}
		\p E(z)&=\int_{\mc{X}/\mc{T}}\sqrt{-1}\p\langle \p u\wedge \b{\p}u\rangle\\
		&=\int_{\mc{X}/\mc{T}}\sqrt{-1}\left[\p\langle \p u\wedge \b{\p}u\rangle\right]^{(\delta v\wedge \delta \b{v})}\\
		&=\left(\int_{\mc{X}/\mc{T}}\sqrt{-1}\langle \p^Vu\wedge \n_{\frac{\delta}{\delta z^\alpha}}\b{\p}^Vu\rangle\right) dz^\alpha,
	\end{align*}
	which completes the proof.
	\end{proof}
Now we will give another formula on the first variation of energy function. Denote 
\begin{align}\label{A-a-1}
A_{\alpha}=A^v_{\alpha\b{v}}u^j_vd\b{v}\otimes \frac{\p}{\p x^j}\in A^1(\mc{X}_z, u^*TN),\quad A^v_{\alpha\b{v}}=\p_{\b{v}}(-\phi_{\alpha\b{v}}\phi^{v\b{v}}).
\end{align}
Then
\begin{align*}
\langle A_{\alpha}, du\rangle & =	\langle A^v_{\alpha\b{v}}u^j_vd\b{v}\otimes \frac{\p}{\p x^j}, u^i_v dv\otimes \frac{\p}{\p x^i}+u^i_{\b{v}}d\b{v}\otimes \frac{\p}{\p x^i}\rangle\\
&=\int_{\mc{X}_z}g_{ij}u^i_vu^j_vA^v_{\alpha\b{v}}\sqrt{-1}dv\wedge d\b{v}.
\end{align*}
\begin{thm}\label{thm1}
	The first variation of energy function is 
\begin{align*}
	\frac{\p E(z)}{\p z^\alpha}=-\langle A_{\alpha},du\rangle .
\end{align*}
\end{thm}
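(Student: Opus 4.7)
The plan is to derive Theorem \ref{thm1} from Theorem \ref{thm2} by means of a tensorial identity that extracts the Kodaira--Spencer coefficient $A^v_{\alpha\b v}$, followed by an integration-by-parts step that kills the remaining piece using harmonicity. Since $\delta v|_{\mc X_z}=dv|_{\mc X_z}$ on each fiber, Theorem \ref{thm2} reduces to
\begin{align*}
\frac{\p E(z)}{\p z^\alpha}=\int_{\mc X_z}\sqrt{-1}\,g_{ij}\,u^i_v\,\n_{\frac{\delta}{\delta z^\alpha}}u^j_{\b v}\,dv\wedge d\b v,
\end{align*}
so the task is to rewrite $\n_{\frac{\delta}{\delta z^\alpha}}u^j_{\b v}$ so as to expose the coefficient $A^v_{\alpha\b v}u^j_v$ modulo a term integrating to zero.

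First I would establish the pointwise identity
\begin{align*}
\n_{\frac{\delta}{\delta z^\alpha}}u^j_{\b v}=\n_{\b v}\!\left(\frac{\delta u}{\delta z^\alpha}\right)^{\!j}-A^v_{\alpha\b v}\,u^j_v.
\end{align*}
To verify it I would split $\frac{\delta}{\delta z^\alpha}=\frac{\p}{\p z^\alpha}+a^v_\alpha\frac{\p}{\p v}$ and apply the definitions of $\n$ from Subsection \ref{subsec1.3}. The contribution $\n_{\p/\p v}u^j_{\b v}=\p_v u^j_{\b v}+\Gamma^j_{kl}u^k_{\b v}u^l_v$ vanishes by the harmonic equation (\ref{harmonic1}) combined with $\p_v u^j_{\b v}=\p_{\b v}u^j_v$. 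A direct expansion of $\n_{\b v}(u^j_{z^\alpha}+a^v_\alpha u^j_v)$ then matches the claim, using $\p_{\b v}a^v_\alpha=A^v_{\alpha\b v}$ and the commutativity $\p_{z^\alpha}u^j_{\b v}=\p_{\b v}u^j_{z^\alpha}$.

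Next I would eliminate the $\n_{\b v}(\delta u/\delta z^\alpha)^j$ contribution by Stokes' theorem. Set $\Psi:=g_{ij}u^i_v(\delta u/\delta z^\alpha)^j$, a smooth function on $\mc X_z$. Metric compatibility of $\n$ gives $\p_{\b v}\Psi=\langle\n_{\b v}\p^Vu,\,\delta u/\delta z^\alpha\rangle+\langle\p^Vu,\n_{\b v}(\delta u/\delta z^\alpha)\rangle$, and the first inner product vanishes by harmonicity in the form $\n_{\b v}\p^Vu=0$. Since $\mc X_z$ is a closed surface and $d(\Psi\,dv)=-\p_{\b v}\Psi\,dv\wedge d\b v$,
\begin{align*}
\int_{\mc X_z}\sqrt{-1}\,g_{ij}u^i_v\,\n_{\b v}\!\left(\frac{\delta u}{\delta z^\alpha}\right)^{\!j}dv\wedge d\b v=0.
\end{align*}
Substituting the identity of the previous paragraph into the starting formula and comparing with the expression for $\langle A_\alpha,du\rangle$ recalled immediately before the theorem statement yields $\frac{\p E(z)}{\p z^\alpha}=-\langle A_\alpha,du\rangle$.

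The hard part of the argument is the tensorial identity in the second paragraph: one must carefully isolate horizontal from vertical pieces of $\n_{\delta/\delta z^\alpha}$ and invoke harmonicity precisely so as to kill $\n_{\p/\p v}u^j_{\b v}$, at which point the Kodaira--Spencer coefficient $A^v_{\alpha\b v}$ emerges naturally as $\p_{\b v}a^v_\alpha$. The subsequent Stokes step is routine, once one observes that harmonicity turns the integrand into an exact $(1,1)$-form on each fiber.
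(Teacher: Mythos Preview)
Your proof is correct and follows the same route as the paper: reduce via Theorem~\ref{thm2}, establish the pointwise identity $\n_{\delta/\delta z^\alpha}u^j_{\b v}=\n_{\b v}\frac{\delta u^j}{\delta z^\alpha}-A^v_{\alpha\b v}u^j_v$ (this is exactly the paper's display (\ref{1.14})), and then eliminate the first term by Stokes' theorem together with the harmonic equation $\n_{\b v}u^i_v=0$. The only minor difference is that the paper derives (\ref{1.14}) purely algebraically, without harmonicity --- your two invocations of the harmonic equation in that step in fact cancel each other, since the term $a^v_\alpha\n_{\p/\p v}u^j_{\b v}$ you kill on the left reappears as $a^v_\alpha\n_{\b v}u^j_v$ when you expand $\n_{\b v}(u^j_{z^\alpha}+a^v_\alpha u^j_v)$ on the right.
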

\begin{proof}
	From Theorem \ref{thm2} we find
	\begin{align*}
	\frac{\p E(z)}{\p z^\alpha} &=	\int_{\mc{X}/\mc{T}}\sqrt{-1}\langle \p^Vu\wedge \n_{\frac{\delta}{\delta z^\alpha}}\b{\p}^Vu\rangle\\
	&=\int_{\mc{X}_z}g_{ij}u^i_{v} \n_{\frac{\delta}{\delta z^\alpha}}u^j_{\b{v}}\sqrt{-1}dv\wedge d\b{v}\\
	&=\int_{\mc{X}_z}g_{ij}u^i_v\left(\n_{\b{v}}\frac{\delta u^j}{\delta z^\alpha}-A^v_{\alpha\b{v}}u^j_v\right)\sqrt{-1}dv\wedge d\b{v}\\
		&=-\int_{\mc{X}_z} g_{ij}u^i_v u^j_v A_{\alpha\b{v}}^v \sqrt{-1}dv\wedge d\b{v}\\
	&=-\langle A_\alpha,du\rangle,
	\end{align*}
where the fourth equality follows from Stokes' theorem and the harmonic equation (\ref{harmonic1}), and the third equality holds by 
\begin{aligns}\label{1.14}
	\n_{\frac{\delta}{\delta z^\alpha}}u^j_{\b{v}}&=\frac{\delta}{\delta z^\alpha}u^j_{\b{v}}+\Gamma^j_{kl}u^l_{\b{v}}\frac{\delta u^k}{\delta z^\alpha}\\
	&=\frac{\p}{\p\b{v}}(\phi_{\alpha\b{v}}\phi^{\b{v}v})u^j_v+\frac{\p}{\p\b{v}}\left(\frac{\delta u^j}{\delta z^\alpha}\right)+\Gamma^j_{kl}u^l_{\b{v}}\frac{\delta u^k}{\delta z^\alpha}\\
	&=-A^v_{\alpha\b{v}}u^j_v+\n_{\b{v}}\frac{\delta u^j}{\delta z^\alpha}.
\end{aligns}
\end{proof}

\subsection{The second variation}\label{subsec2.2}

We first recall the definition of Hermitian sectional curvature on a
Riemannian manifold $(N, g)$. Let $\n^N$ be the Levi-Civita connection
of Riemannian manifold $(N,g)$.
Recall that the Riemann curvature endomorphism $R\in
A^2(N,\text{End}(TN))$ is
\begin{align*}
R(X,Y)Z=\n^N_X\n^N_YZ-\n^N_Y\n^N_XZ-\n^N_{[X,Y]}Z.
\end{align*} Recall also the notation
\begin{align*}
R(X,Y,Z,W)=-\langle R(X,Y)Z,W\rangle, 	
\end{align*}
and
\begin{align*}
R_{ikjl}:=R\left(\frac{\p}{\p x^i}, \frac{\p}{\p x^k},\frac{\p}{\p x^j},\frac{\p}{\p x^l}\right).	
\end{align*}
By a direct calculation, one has
\begin{multline*}
R_{ikjl}=-\frac{1}{2}\left(\frac{\p^2 g_{ij}}{\p x^k\p x^l}+\frac{\p^2 g_{kl}}{\p x^i\p x^j}-\frac{\p^2 g_{il}}{\p x^k\p x^j}-\frac{\p^2 g_{kj}}{\p x^i\p x^l}\right)\\-g^{mn}\left(\Gamma^m_{ij}\Gamma^n_{kl}-\Gamma^m_{il}\Gamma^n_{kj}\right).
\end{multline*}
The sectional curvature is defined by 
\begin{align*}
K(X\wedge Y)=\frac{R(X,Y,X,Y)}{\|X\|^2\|Y\|^2-\langle X,Y\rangle^2}.	
\end{align*}
The Riemann curvature tensor $R$ can be
extended  on the complexified tangent bundle $TN\otimes\mb{C}$.
We recall the following  curvature condition
of Siu \cite{Siu}
and Sampson \cite{Sampson}.

\begin{defn}[\cite{Siu, Sampson, Toledo}]\label{HSC}
 For any $X,Y\in TN\otimes \mb{C}$, the {\it Hermitian sectional curvature} on the plane $X\wedge Y$ is defined by 
\begin{align*}
K_{\mb{C}}(X\wedge Y):=\frac{R(X,Y,\o{X},\o{Y})}{\|X\|^2\|Y\|^2-|\langle X,\o{Y}\rangle|^2}.	
\end{align*}
The Riemannian manifold $(N,g)$ is said to have {\it non-positive (resp. strictly negative) Hermitian sectional curvature} if 
\begin{align*}
K_{\mb{C}}(X\wedge Y)\leq 0	\quad (resp. <0)
\end{align*}
	for any $X, Y\in TN\otimes \mb{C}$ with $X\wedge Y\neq 0$.
\end{defn}

Recall the notation $\left[\p\b{\p}\langle \p u\wedge
  \b{\p}u\rangle\right]^{(\delta v\wedge \delta \b{v})}$, 
 the part of $\p\b{\p}\langle \p u\wedge \b{\p}u\rangle$ containing $\delta v\wedge \delta \b{v}$.
Then 
\begin{lemma}\label{lemma4} It holds
\begin{multline*}
\left[\p\b{\p}\langle \p u\wedge \b{\p}u\rangle\right]^{(\delta v\wedge \delta \b{v})}=-2R\left(\frac{\p u}{\p v},\frac{\delta u}{\delta z^\alpha},\frac{\p u}{\p \b{v}},\frac{\delta u}{\delta \b{z}^\beta}\right)\delta v\wedge \delta\b{v}\wedge dz^\alpha\wedge d\b{z}^\beta\\+2\langle \n_{\frac{\delta}{\delta\b{z}^\beta}}\p^Vu\wedge\n_{\frac{\delta}{\delta z^\alpha}}\b{\p}^Vu\rangle\wedge dz^\alpha\wedge d\b{z}^\beta,
\end{multline*}	
where 
\begin{align*}
\frac{\p u}{\p v}=u^i_v\frac{\p}{\p x^i},\quad 	\frac{\delta u}{\delta z^\alpha}=\frac{\delta u^i}{\delta z^{\alpha}}\frac{\p}{\p x^i},\quad \frac{\p u}{\p\b{v}}=u^i_{\b{v}}\frac{\p}{\p x^i},\quad 	\frac{\delta u}{\delta \b{z}^\beta}=\frac{\delta u^i}{\delta \b{z}^{\beta}}\frac{\p}{\p x^i}.
\end{align*}
\end{lemma}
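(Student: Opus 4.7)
The plan is to verify the identity pointwise at an arbitrary base point $(z_0,v_0)\in\mc{X}$, since both sides are manifestly globally defined tensors. Fix such a point and choose normal coordinates $\{x^i\}$ on $N$ centered at $u(z_0,v_0)$ as in (\ref{normal}); then $\Gamma^i_{jk}(u_0)=0$ and the harmonic equation (\ref{harmonic}) forces $u^i_{v\b{v}}=\p_{\b{v}}u^i_v=0$ at $(z_0,v_0)$. Applying the Leibniz rule to $\p\b{\p}(g_{ij}\p u^i\w\b{\p}u^j)$, using $\p^2=\b{\p}^2=0$ and $\p\b{\p}=-\b{\p}\p$, and discarding the terms proportional to $\p g_{ij}$ or $\b{\p}g_{ij}$ (which vanish at the base point), yields
\begin{align*}
\p\b{\p}\langle\p u\w\b{\p}u\rangle\big|_{(z_0,v_0)}=\p\b{\p}g_{ij}\w\p u^i\w\b{\p}u^j-\sum_i\p\b{\p}u^i\w\p\b{\p}u^i.
\end{align*}
The $\delta v\w\delta\b{v}\w dz^\alpha\w d\b{z}^\beta$-coefficients of these two summands will be shown to give the two terms on the right-hand side of the claim.

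For the first summand, the chain rule reduces $\p\b{\p}g_{ij}$ to $\p_k\p_l g_{ij}\,\p u^k\w\b{\p}u^l$ at $u_0$. Writing $\p u^i=u^i_v\,\delta v+\frac{\delta u^i}{\delta z^\alpha}dz^\alpha$ and $\b{\p}u^i=u^i_{\b{v}}\,\delta\b{v}+\frac{\delta u^i}{\delta\b{z}^\beta}d\b{z}^\beta$ (immediate from $\delta v=dv-a^v_\alpha dz^\alpha$ and the definition of $\delta/\delta z^\alpha$), the desired coefficient picks up contributions from four different assignments of which of the two $(1,0)$-factors yields $\delta v$ and which of the two $(0,1)$-factors yields $\delta\b{v}$, each with its own permutation sign. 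Relabeling dummy indices, the coefficient collects into the bracket $(\p_k\p_l g_{ij}+\p_i\p_j g_{kl}-\p_k\p_j g_{il}-\p_i\p_l g_{kj})(u_0)$ contracted with $u^k_v u^l_{\b{v}}\frac{\delta u^i}{\delta z^\alpha}\frac{\delta u^j}{\delta\b{z}^\beta}$. Since $\Gamma(u_0)=0$, the coordinate formula for the Riemann curvature recalled in Section \ref{sec2} gives that this bracket equals $-2R_{ikjl}(u_0)$; the pair antisymmetries $R(X,Y,Z,W)=-R(Y,X,Z,W)=-R(X,Y,W,Z)$ then identify the result with $-2R(\p u/\p v,\delta u/\delta z^\alpha,\p u/\p\b{v},\delta u/\delta\b{z}^\beta)\,\delta v\w\delta\b{v}\w dz^\alpha\w d\b{z}^\beta$.

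For the second summand, Lemma \ref{lemma2} decomposes $\p\b{\p}u^i$ in the basis $\{dz^\alpha,d\b{z}^\beta,\delta v,\delta\b{v}\}$; harmonicity at $(z_0,v_0)$ kills the $\delta v\w\delta\b{v}$-coefficient $u^i_{v\b{v}}$, so the $\delta v\w\delta\b{v}\w dz\w d\b{z}$ part of the wedge square $\p\b{\p}u^i\w\p\b{\p}u^i$ arises only as the cross product between the $\frac{\delta}{\delta z^\alpha}u^i_{\b{v}}\,dz^\alpha\w\delta\b{v}$ and $\frac{\delta}{\delta\b{z}^\beta}u^i_v\,\delta v\w d\b{z}^\beta$ components. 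A sign check gives $-2\sum_i\bigl[\frac{\delta}{\delta z^\alpha}u^i_{\b{v}}\bigr]\bigl[\frac{\delta}{\delta\b{z}^\beta}u^i_v\bigr]\delta v\w\delta\b{v}\w dz^\alpha\w d\b{z}^\beta$, and the outer minus sign in front of $\sum_i\p\b{\p}u^i\w\p\b{\p}u^i$ flips this to $+2$. At $(z_0,v_0)$ one has $\Gamma(u_0)=0$ and $(\b{\p}\phi)(\delta/\delta z^\alpha)=(\p\phi)(\delta/\delta\b{z}^\beta)=0$, so from the definition of $\n$ in Subsection \ref{subsec1.3}, $\n_{\delta/\delta z^\alpha}u^i_{\b{v}}=\frac{\delta}{\delta z^\alpha}u^i_{\b{v}}$ and $\n_{\delta/\delta\b{z}^\beta}u^i_v=\frac{\delta}{\delta\b{z}^\beta}u^i_v$; the expression thus equals $2\langle\n_{\delta/\delta\b{z}^\beta}\p^Vu\w\n_{\delta/\delta z^\alpha}\b{\p}^Vu\rangle\w dz^\alpha\w d\b{z}^\beta$, matching the second term of the claim.

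The main obstacle will be the combinatorial bookkeeping in the first summand: enumerating the four wedge-factor assignments, carefully tracking the permutation signs from reordering products like $dz^\alpha\w\delta\b{v}\w\delta v\w d\b{z}^\beta$ into the standard order $\delta v\w\delta\b{v}\w dz^\alpha\w d\b{z}^\beta$, and re-indexing the sums so that the resulting four-term combination of second derivatives of $g_{ij}$ precisely matches the coordinate expression for $R_{ikjl}(u_0)$.
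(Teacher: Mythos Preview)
Your proof is correct and follows essentially the same approach as the paper's own proof: both fix a point, take normal coordinates on $N$, reduce $\p\b{\p}\langle\p u\w\b{\p}u\rangle$ at that point to $\p\b{\p}g_{ij}\w\p u^i\w\b{\p}u^j-(\p\b{\p}u^i)^2$, use Lemma~\ref{lemma2} and harmonicity to extract the $\delta v\w\delta\b{v}$-component, and identify the two resulting terms with the curvature and covariant-derivative expressions. Your treatment of the four-term combinatorics and the sign bookkeeping is a bit more explicit than the paper's, but there is no substantive difference.
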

\begin{proof}
	From (\ref{1.6}), one has
	\begin{multline*}
\p\b{\p}\langle \p u\wedge \b{\p}u\rangle =	(\p\b{\p} g_{ij}\wedge \p u^i-\p g_{ij}\wedge \p\b{\p} u^i)\wedge \b{\p}u^j\\+(\b{\p} g_{ij}\wedge \p u^i-g_{ij}\p\b{\p}u^i)	\wedge \p\b{\p}u^j.	
	\end{multline*}
By taking a normal coordinates system $\{x^i\}$ around $u(z_0,v_0)$
for any fixed point $(z_0,v_0)\in\mc{X}$ as in (\ref{normal}), we get that, at the point $(z_0,v_0)$,
\begin{align*}
	\p\b{\p}\langle \p u\wedge \b{\p}u\rangle=\p\b{\p}g_{ij}\wedge \p u^i\wedge \b{\p} u^j-(\p\b{\p}u^i)^2.
\end{align*}
By (\ref{1.7}) we have further
\begin{align*}
&\quad \left[\p\b{\p}\langle \p u\wedge \b{\p}u\rangle\right]^{(\delta v\wedge \delta \b{v})}\\
&=\left[\p\b{\p}g_{ij}\wedge \p u^i\wedge \b{\p} u^j-(\p\b{\p}u^i)^2\right]^{(\delta v\wedge \delta \b{v})}\\
&=\left[(\p_k\p_{l}g_{ij})\p u^k\wedge \b{\p}u^l\wedge \p u^i\wedge \b{\p} u^j -(\frac{\delta}{\delta z^\alpha}u^i_{\b{v}}dz^\alpha\wedge \delta\b{v}+\frac{\delta}{\delta\b{z}^\beta}u^i_v\delta v\wedge d\b{z}^\beta)^2\right]^{(\delta v\wedge \delta \b{v})}\\
&=\left((\p_k\p_{l}g_{ij})(\frac{\delta u^k}{\delta z^\alpha}\frac{\delta u^l}{\delta\b{z}^\beta}u^i_v u^j_{\b{v}}-\frac{\delta u^k}{\delta z^\alpha}\frac{\delta u^j}{\delta\b{z}^\beta}u^i_v u^l_{\b{v}}\right.\\
&\quad\left.-\frac{\delta u^i}{\delta z^\alpha}\frac{\delta u^l}{\delta\b{z}^\beta}u^k_v u^j_{\b{v}}+\frac{\delta u^i}{\delta z^\alpha}\frac{\delta u^j}{\delta\b{z}^\beta}u^k_v u^l_{\b{v}})
+2(\frac{\delta }{\delta z^\alpha}u^i_{\b{v}})(\frac{\delta}{\delta\b{z}^\beta}u^i_v)\right)
dz^\alpha\wedge d\b{z}^\beta\wedge \delta v\wedge \delta\b{v}\\
&=2\left(-R_{ikjl}\frac{\delta u^k}{\delta z^\alpha}\frac{\delta u^l}{\delta\b{z}^\beta}u^i_v u^j_{\b{v}}+g_{ij}\n_{\frac{\delta}{\delta \b{z}^\beta}}u^j_{v}\o{\n_{\frac{\delta}{\delta \b{z}^\alpha}}u^i_{v}}\right)\delta v\wedge \delta\b{v}\wedge dz^\alpha\wedge d\b{z}^\beta \\
&=-2R\left(\frac{\p u}{\p v},\frac{\delta u}{\delta z^\alpha},\frac{\p u}{\p \b{v}},\frac{\delta u}{\delta \b{z}^\beta}\right)\delta v\wedge \delta\b{v}\wedge dz^\alpha\wedge d\b{z}^\beta\\
&\quad+2\langle \n_{\frac{\delta}{\delta\b{z}^\beta}}\p^Vu\wedge\n_{\frac{\delta}{\delta z^\alpha}}\b{\p}^Vu\rangle\wedge dz^\alpha\wedge d\b{z}^\beta,
\end{align*}
where the second equality follows from (\ref{1.7}) and note that $[\p\b{\p}u^i]^{(\delta v\wedge \delta\b{v})}=0$ at the point $(z_0,v_0)$, 
 the fourth equality holds since 
$$\n_{\frac{\delta}{\delta\b{z}^\beta} }u^j_v=\frac{\delta}{\delta\b{z}^\beta}u^j_v+\Gamma^j_{kl}u^k_v\frac{\delta u^l}{\delta\b{z}^\beta}=\frac{\delta}{\delta\b{z}^\beta}u^j_v$$ 
at the point $(z_0,v_0)$ and
\begin{align*}
R_{ikjl}=-\frac{1}{2}\left(\frac{\p^2 g_{ij}}{\p x^k\p x^l}+\frac{\p^2 g_{kl}}{\p x^i\p x^j}-\frac{\p^2 g_{il}}{\p x^k\p x^j}-\frac{\p^2 g_{kj}}{\p x^i\p x^l}\right)
\end{align*}
at the point $u(z_0,v_0)$.
Since the point $(z_0,v_0)$ is arbitrary, we complete the proof.
\end{proof}
\begin{thm}\label{thm3}
	The second variation of the energy is 
	\begin{multline*}
	\frac{\p^2E(z)}{\p z^\alpha\p\b{z}^\beta}=	2\int_{\mc{X}/\mc{T}}-R\left(\frac{\p u}{\p v},\frac{\delta u}{\delta z^\alpha},\frac{\p u}{\p \b{v}},\frac{\delta u}{\delta \b{z}^\beta}\right)\sqrt{-1}\delta v\wedge \delta\b{v}\\
	+2\int_{\mc{X}/\mc{T}}\langle \n_{\frac{\delta}{\delta\b{z}^\beta}}\p^Vu\wedge\n_{\frac{\delta}{\delta z^\alpha}}\b{\p}^Vu\rangle  .
	\end{multline*}
\end{thm}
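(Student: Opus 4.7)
The plan is to observe that Theorem \ref{thm3} follows almost immediately by combining the variational identity in (\ref{1.5}) with the explicit pointwise computation already carried out in Lemma \ref{lemma4}. Starting from
\[
\p\b{\p}E(z)=\sqrt{-1}\int_{\mc{X}/\mc{T}}\p\b{\p}\langle \p u\wedge \b{\p}u\rangle,
\]
I would first note that fiber integration $\int_{\mc{X}/\mc{T}}:A^{2+k}(\mc{X})\to A^k(\mc{T})$ is only sensitive to the part of the integrand that contains the vertical volume form $\delta v\wedge\delta\b{v}$, since the fibers $\mc{X}_z$ are one-dimensional. Consequently, in the integrand $\p\b{\p}\langle \p u\wedge \b{\p}u\rangle\in A^4(\mc{X})$, I may replace it by its $(\delta v\wedge\delta\b{v})$-component without changing the fiber integral.

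Next, I would invoke Lemma \ref{lemma4} to substitute the explicit formula for this component, namely
\[
\left[\p\b{\p}\langle \p u\wedge \b{\p}u\rangle\right]^{(\delta v\wedge \delta\b{v})}=\Big(-2R\big(\tfrac{\p u}{\p v},\tfrac{\delta u}{\delta z^\alpha},\tfrac{\p u}{\p\b{v}},\tfrac{\delta u}{\delta\b{z}^\beta}\big)+2\langle \n_{\frac{\delta}{\delta\b{z}^\beta}}\p^Vu,\n_{\frac{\delta}{\delta z^\alpha}}\b{\p}^Vu\rangle\Big)\,\delta v\wedge\delta\b{v}\wedge dz^\alpha\wedge d\b{z}^\beta.
\]
Inserting this into the fiber integral and collecting the coefficient of $dz^\alpha\wedge d\b{z}^\beta$ on the Teichm\"uller base then gives exactly the claimed formula for $\frac{\p^2 E(z)}{\p z^\alpha\p\b{z}^\beta}$.

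In short, the real work has already been done in Lemma \ref{lemma4}; the proof of Theorem \ref{thm3} is essentially a bookkeeping step that extracts the purely vertical component of $\p\b{\p}\langle \p u\wedge \b{\p}u\rangle$ and interprets the result on the base. The only point requiring care is sign and factor tracking through the fiber integration convention, together with the observation that horizontal and mixed pieces (those involving $dz^\alpha\wedge\delta\b{v}$ or $\delta v\wedge d\b{z}^\beta$ alone) do not survive the integration over $\mc{X}_z$. There is no substantial analytic obstacle here, since the harmonic map equation $\n_{\b{v}}u^i_v=0$ and the normal-coordinate simplifications were already used in proving Lemma \ref{lemma4}.
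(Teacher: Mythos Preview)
Your proposal is correct and follows essentially the same approach as the paper: the paper's proof likewise starts from (\ref{1.5}), replaces the integrand by its $(\delta v\wedge\delta\b{v})$-component, substitutes the formula from Lemma \ref{lemma4}, and reads off the coefficient of $dz^\alpha\wedge d\b{z}^\beta$. Your identification of this as a bookkeeping step on top of Lemma \ref{lemma4} is exactly right.
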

\begin{proof}
	By (\ref{1.5}) and Lemma \ref{lemma4} we have
	\begin{align*}
	\p\b{\p}E(z) &=	\sqrt{-1}\int_{\mc{X}/\mc{T}}\p\b{\p}\langle \p u\wedge \b{\p}u\rangle\\
	&=\sqrt{-1}\int_{\mc{X}/\mc{T}} \left[\p\b{\p}\langle \p u\wedge \b{\p}u\rangle\right]^{(\delta v\wedge \delta \b{v})}\\
	&=2\int_{\mc{X}/\mc{T}}\left(-R\left(\frac{\p u}{\p v},\frac{\delta u}{\delta z^\alpha},\frac{\p u}{\p \b{v}},\frac{\delta u}{\delta \b{z}^\beta}\right)\delta v\wedge \delta\b{v}\wedge dz^\alpha\wedge d\b{z}^\beta\right.\\
&\quad+\left.\langle \n_{\frac{\delta}{\delta\b{z}^\beta}}\p^Vu\wedge\n_{\frac{\delta}{\delta z^\alpha}}\b{\p}^Vu\rangle\wedge dz^\alpha\wedge d\b{z}^\beta\right)\\
&=2\int_{\mc{X}/\mc{T}}\left(-R\left(\frac{\p u}{\p v},\frac{\delta u}{\delta z^\alpha},\frac{\p u}{\p \b{v}},\frac{\delta u}{\delta \b{z}^\beta}\right)\sqrt{-1}\delta v\wedge \delta\b{v}\right.\\
&\quad\left.+\langle \n_{\frac{\delta}{\delta\b{z}^\beta}}\p^Vu\wedge\n_{\frac{\delta}{\delta z^\alpha}}\b{\p}^Vu\rangle\right) \cdot dz^\alpha\wedge d\b{z}^\beta,
	\end{align*}
 which completes the proof.
\end{proof}

\subsection{Plurisuperharmonicity}\label{subsec2.3}

In this subsection, we will prove the strict plurisubharmonicity of logarithmic energy function $\log E(z)$ and plurisuperharmoncity of reciprocal energy function $E(z)^{-1}$.

Firstly,  we will show the reciprocal energy function $E(z)^{-1}$ is  plurisuperharmonic. 
\begin{lemma}\label{lemma5}
For any $\xi=\xi^\alpha\frac{\p}{\p z^\alpha}\in T_z\mc{T}$ it holds
	\begin{align*}
	\left|\xi^\alpha\frac{\p E(z)}{\p z^\alpha}\right|^2\leq E(z)\cdot \int_{\mc{X}/\mc{T}}\langle \n_{\b{\xi}^\beta\frac{\delta}{\delta\b{z}^\beta}}\p^Vu\wedge\n_{\xi^\alpha\frac{\delta}{\delta z^\alpha}}\b{\p}^Vu\rangle.	
	\end{align*}
\end{lemma}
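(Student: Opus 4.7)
The plan is to identify the left-hand side as the modulus squared of an $L^2$-Hermitian inner product on sections of the bundle $\mathcal{V}^*\otimes u^*TN_{\mathbb{C}}$ over the fiber $\mathcal{X}_z$, and then apply Cauchy--Schwarz.

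First I would set up the appropriate Hermitian pairing. The bundle $\mathcal{V}^*$ carries the fiber metric $\phi^{v\bar v}$ dual to $\phi_{v\bar v}$, while $u^*TN_{\mathbb{C}}$ carries the $\mathbb{C}$-linear extension of $g$. Together with the fiber volume form $\sqrt{-1}\phi_{v\bar v}\,dv\wedge d\bar v$ on $\mathcal{X}_z$, these induce an $L^2$ Hermitian inner product on sections $\varphi,\psi$ of $\mathcal{V}^*\otimes u^*TN_{\mathbb{C}}$ which unravels to
\begin{equation*}
(\varphi,\psi)_{L^2}
=\int_{\mathcal{X}_z} g_{ij}\varphi^i_v\,\overline{\psi^j_v}\,\sqrt{-1}\,dv\wedge d\bar v
=\int_{\mathcal{X}/\mathcal{T}}\sqrt{-1}\,\langle \varphi\wedge\overline{\psi}\rangle ,
\end{equation*}
where in the second form one writes $\overline\psi=\overline{\psi^j_v}\,\delta\bar v\otimes\partial/\partial x^j\in A^0(\mathcal{X}_z,\overline{\mathcal{V}}{}^*\otimes u^*TN_{\mathbb{C}})$.

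Next I would rewrite both sides of the claimed inequality in this language. The equality $E(z)=\sqrt{-1}\int_{\mathcal{X}/\mathcal{T}}\langle \partial u\wedge\bar\partial u\rangle$ reduces, on the fiber, to the $\delta v\wedge\delta\bar v$-component only, so that $E(z)=(\partial^V u,\partial^V u)_{L^2}$. Setting $\gamma:=\nabla_{\bar\xi^\beta\frac{\delta}{\delta\bar z^\beta}}\partial^V u\in A^0(\mathcal{X}_z,\mathcal{V}^*\otimes u^*TN_{\mathbb{C}})$, complex conjugation of the connection gives $\overline{\gamma}=\nabla_{\xi^\alpha\frac{\delta}{\delta z^\alpha}}\bar\partial^V u$, so Theorem~\ref{thm2} yields
\begin{equation*}
\xi^\alpha\frac{\partial E(z)}{\partial z^\alpha}
=\int_{\mathcal{X}/\mathcal{T}}\sqrt{-1}\,\langle \partial^V u\wedge \overline{\gamma}\rangle
=(\partial^V u,\gamma)_{L^2},
\end{equation*}
while the right-hand side of the inequality is exactly $E(z)\cdot(\gamma,\gamma)_{L^2}$.

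Finally, the pointwise Cauchy--Schwarz inequality for the positive semidefinite Hermitian form $(\,\cdot\,,\,\cdot\,)_{L^2}$ gives
\begin{equation*}
\bigl|(\partial^V u,\gamma)_{L^2}\bigr|^2 \le (\partial^V u,\partial^V u)_{L^2}\cdot(\gamma,\gamma)_{L^2},
\end{equation*}
which is precisely the stated inequality. The only delicate point (and the main thing to be careful about) is the bookkeeping of conjugations: verifying that $\overline{\nabla_{\bar\xi\frac{\delta}{\delta\bar z}}\partial^V u}=\nabla_{\xi\frac{\delta}{\delta z}}\bar\partial^V u$ so that the integrand from Theorem~\ref{thm2} really matches $\sqrt{-1}\langle\partial^V u\wedge\overline{\gamma}\rangle$; this is immediate from the definitions of $\nabla$ given in Subsection~\ref{subsec1.3} since the Chern connection on $\mathcal{V}^*$ and the Levi-Civita connection on $TN$ both commute with complex conjugation in the appropriate sense.
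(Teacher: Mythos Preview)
Your proof is correct and follows essentially the same route as the paper: use Theorem~\ref{thm2} to write $\xi^\alpha\p_\alpha E(z)$ as an $L^2$-pairing of $\p^V u$ against $\n_{\b\xi^\beta\frac{\delta}{\delta\b z^\beta}}\p^V u$, then apply Cauchy--Schwarz. The paper carries this out directly in local coordinates rather than first naming the Hermitian inner product $(\,\cdot\,,\,\cdot\,)_{L^2}$, but the content is identical, including the conjugation bookkeeping you flagged.
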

\begin{proof}
This follows directly from  Theorem \ref{thm2} and Cauchy-Schwarz inequality:
\begin{align*}
	\left|\xi^\alpha\frac{\p E(z)}{\p z^\alpha}\right|^2 &=\left|\xi^\alpha\int_{\mc{X}/\mc{T}}\sqrt{-1}\langle \p^Vu\wedge \n_{\frac{\delta}{\delta z^\alpha}}\b{\p}^Vu\rangle\right|^2\\
	&=\left|\int_{\mc{X}_z}g_{ij}u^i_v\o{\n_{\b{\xi}^\alpha\frac{\delta}{\delta \b{z}^\alpha}}u^j_v}\sqrt{-1}dv\wedge d\b{v}\right|^2\\
	&\leq \int_{\mc{X}_z}g_{ij}u^i_v u^j_{\b{v}}\sqrt{-1}dv\wedge d\b{v}\cdot \int_{\mc{X}_z}g_{ij}\n_{\b{\xi}^\alpha\frac{\delta}{\delta \b{z}^\alpha}}u^i_v\o{\n_{\b{\xi}^\alpha\frac{\delta}{\delta \b{z}^\alpha}}u^j_v}\sqrt{-1}dv\wedge d\b{v}\\
	&=E(z)\cdot \int_{\mc{X}/\mc{T}}\langle \n_{\b{\xi}^\beta\frac{\delta}{\delta\b{z}^\beta}}\p^Vu\wedge\n_{\xi^\alpha\frac{\delta}{\delta z^\alpha}}\b{\p}^Vu\rangle.
\end{align*}
\end{proof}
\begin{thm}\label{thm4}
If $(N,g)$ has non-positive Hermitian sectional curvature, then the function $E(z)^{-1}$ is plurisuperharmonic, i.e. 
\begin{align*}
\sqrt{-1}\p\b{\p}E(z)^{-1}\leq 0.	
\end{align*}
\end{thm}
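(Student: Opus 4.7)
The plan is to pair the second-variation formula of Theorem \ref{thm3} with the Cauchy--Schwarz estimate of Lemma \ref{lemma5}, and then use the chain-rule identity for $E^{-1}$. First I would write
\begin{align*}
\frac{\p^2 E(z)^{-1}}{\p z^\alpha \p \b{z}^\beta}
= -\frac{1}{E^2}\frac{\p^2 E(z)}{\p z^\alpha \p \b{z}^\beta}
+ \frac{2}{E^3}\frac{\p E(z)}{\p z^\alpha}\frac{\p E(z)}{\p \b{z}^\beta},
\end{align*}
so that, for $\xi=\xi^\alpha \p/\p z^\alpha\in T_z\mathcal T$, the plurisuperharmonicity statement reduces to showing
\begin{align*}
\xi^\alpha\b{\xi}^\beta\frac{\p^2 E(z)}{\p z^\alpha \p \b{z}^\beta}
\;\geq\; \frac{2}{E(z)}\left|\xi^\alpha\frac{\p E(z)}{\p z^\alpha}\right|^2.
\end{align*}

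Next I would substitute the second-variation formula of Theorem \ref{thm3}, which expresses $\p^2 E/\p z^\alpha\p\b z^\beta$ as the sum of a curvature term and a term quadratic in covariant derivatives of $\p^V u$, $\b{\p}^V u$. Contracting with $\xi^\alpha\b\xi^\beta$ pulls the parameters inside the tensorial arguments: the curvature piece becomes
\begin{align*}
-2\int_{\mathcal X_z} R\!\left(\tfrac{\p u}{\p v},\,\xi^\alpha\tfrac{\delta u}{\delta z^\alpha},\,\tfrac{\p u}{\p \b v},\,\b\xi^\beta\tfrac{\delta u}{\delta \b z^\beta}\right)\sqrt{-1}\,\delta v\wedge\delta\b v,
\end{align*}
which has the shape $-2\int R(X,Y,\b X,\b Y)\sqrt{-1}\delta v\wedge\delta\b v$ with $X=\p u/\p v$ and $Y=\xi^\alpha \delta u/\delta z^\alpha$; by the non-positive Hermitian sectional curvature hypothesis (Definition \ref{HSC}) this piece is non-negative pointwise. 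Hence, discarding the curvature contribution only strengthens an inequality, and we obtain
\begin{align*}
\xi^\alpha\b{\xi}^\beta\frac{\p^2 E(z)}{\p z^\alpha \p \b{z}^\beta}
\;\geq\; 2\int_{\mathcal X/\mathcal T}\!\langle \n_{\b\xi^\beta\frac{\delta}{\delta \b z^\beta}}\p^V u\wedge \n_{\xi^\alpha\frac{\delta}{\delta z^\alpha}}\b{\p}^V u\rangle.
\end{align*}

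Finally I would invoke Lemma \ref{lemma5} to bound the right-hand side below by $2\,|\xi^\alpha \p E/\p z^\alpha|^2/E(z)$. Combined with the rewriting of $\p\b\p E^{-1}$ above, this yields
\begin{align*}
\xi^\alpha\b\xi^\beta \frac{\p^2 E(z)^{-1}}{\p z^\alpha\p\b z^\beta}
\;\leq\; -\frac{1}{E^2}\cdot\frac{2}{E}\left|\xi^\alpha\frac{\p E}{\p z^\alpha}\right|^2 + \frac{2}{E^3}\left|\xi^\alpha\frac{\p E}{\p z^\alpha}\right|^2 = 0,
\end{align*}
establishing $\sqrt{-1}\p\b\p E(z)^{-1}\leq 0$. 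The only delicate point, and what I would check carefully, is the bookkeeping of the factor of $2$: it appears once in Theorem \ref{thm3} in front of the covariant-derivative term and once in the chain-rule identity for $E^{-1}$, and these exactly match so that the Cauchy--Schwarz step of Lemma \ref{lemma5} cancels the discrepancy. Everything else is a direct substitution once that cancellation is in place.
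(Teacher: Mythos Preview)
Your proposal is correct and follows essentially the same approach as the paper: write the chain-rule identity for $E^{-1}$, use Theorem~\ref{thm3} together with the non-positive Hermitian sectional curvature hypothesis to drop the curvature term, and then apply Lemma~\ref{lemma5} to get the needed lower bound $\frac{\p^2 E}{\p z^\alpha\p\b z^\beta}\xi^\alpha\b\xi^\beta\geq \frac{2}{E}\bigl|\xi^\alpha\frac{\p E}{\p z^\alpha}\bigr|^2$. The bookkeeping of the factor of $2$ that you flag is exactly as in the paper, and there is nothing further to add.
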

\begin{proof}
For any vector $\xi=\xi^\alpha\frac{\p}{\p z^\alpha}\in T_z\mc{T}$, 
\begin{align}\label{1.11}
	\frac{\p^2 E(z)^{-1}}{\p z^\alpha\p\b{z}^\beta}\xi^\alpha\b{\xi}^\beta=-\frac{1}{E^2}\left(\frac{\p^2E(z)}{\p z^\alpha\p\b{z}^\beta}-\frac{2}{E}\frac{\p E(z)}{\p z^\alpha}\frac{\p E(z)}{\p\b{z}^\beta}\right)\xi^\alpha\b{\xi}^\beta.
\end{align}
The first term above can be treated using  Theorem \ref{thm3},
\begin{aligns}\label{1.12}
	\frac{\p^2E(z)}{\p z^\alpha\p\b{z}^\beta}\xi^\alpha\b{\xi}^\beta &=		2\int_{\mc{X}/\mc{T}}-R\left(\frac{\p u}{\p v},\xi^\alpha\frac{\delta u}{\delta z^\alpha},\frac{\p u}{\p \b{v}},\b{\xi}^\beta\frac{\delta u}{\delta \b{z}^\beta}\right)\sqrt{-1}\delta v\wedge \delta\b{v}\\
&\quad	+2\int_{\mc{X}/\mc{T}}\langle \n_{\b{\xi}^\beta\frac{\delta}{\delta\b{z}^\beta}}\p^Vu\wedge\n_{\xi^\alpha\frac{\delta}{\delta z^\alpha}}\b{\p}^Vu\rangle\\
&\geq 2\int_{\mc{X}/\mc{T}}\langle \n_{\b{\xi}^\beta\frac{\delta}{\delta\b{z}^\beta}}\p^Vu\wedge\n_{\xi^\alpha\frac{\delta}{\delta z^\alpha}}\b{\p}^Vu\rangle
	\end{aligns}
        by the non-positivity of Hermitian sectional curvature.
Furthermore        Lemma \ref{lemma5} implies  that
\begin{align}\label{1.10}
	\frac{\p^2E(z)}{\p z^\alpha\p\b{z}^\beta}\xi^\alpha\b{\xi}^\beta\geq \frac{2}{E}\left|\xi^\alpha\frac{\p E(z)}{\p z^\alpha}\right|^2=\frac{2}{E}\frac{\p E(z)}{\p z^\alpha}\frac{\p E(z)}{\p\b{z}^\beta}\xi^\alpha\b{\xi}^\beta.
\end{align}
Substituting (\ref{1.10}) into (\ref{1.11}), one has
\begin{align*}
	\frac{\p^2 E(z)^{-1}}{\p z^\alpha\p\b{z}^\beta}\xi^\alpha\b{\xi}^\beta\leq 0.
\end{align*}
Thus 
\begin{align*}
\sqrt{-1}\p\b{\p}E(z)^{-1}=\frac{\p^2E(z)^{-1}}{\p z^\alpha\p\b{z}^\beta}\sqrt{-1}dz^\alpha\wedge d\b{z}^\beta\leq 0.	
\end{align*}
\end{proof}
Next, using Theorem \ref{thm4} we get the following (strict) plurisubharmonicity of logarithmic energy $\log E(z)$.
\begin{thm}\label{thm5}
If $(N,g)$ has non-positive Hermitian sectional curvature, then the logarithmic energy function $\log E(z)$ is plurisubharmonic on Teichm\"uller space $\mc{T}$, i.e. $$\sqrt{-1}\p\b{\p}\log E(z)\geq 0.$$ 	
Moreover, if $(N,g)$ has strictly negative Hermitian sectional curvature and $d(u(z))$ is never zero on $\mc{X}_z$, then $\log E(z)$ is strictly plurisubharmonic, i.e.
$$\sqrt{-1}\p\b{\p}\log E(z)>0.$$
\end{thm}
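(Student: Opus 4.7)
The approach is to derive the plurisubharmonicity of $\log E(z)$ directly from Theorem \ref{thm4} via the algebraic identity
\[
\sqrt{-1}\p\b\p \log E \;=\; -E\,\sqrt{-1}\p\b\p E^{-1} \;+\; E^{-2}\,\sqrt{-1}\p E\wedge \b\p E
\]
recorded in the introduction. Since $E>0$, the first summand is non-negative by Theorem \ref{thm4}; the second summand evaluates on $(\xi,\b\xi)$ as $E^{-2}|\xi^\alpha \p_\alpha E|^2\ge 0$. Summing gives $\sqrt{-1}\p\b\p \log E\ge 0$ on $\mc{T}$.

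For the strict plurisubharmonicity at $z_0$, suppose for contradiction that $\sqrt{-1}\p\b\p \log E(z_0)(\xi,\b\xi)=0$ for some nonzero $\xi=\xi^\alpha\frac{\p}{\p z^\alpha}\in T_{z_0}\mc{T}$. Each of the two non-negative summands above must vanish at $z_0$, yielding $\xi^\alpha\p_\alpha E(z_0)=0$ together with $\sqrt{-1}\p\b\p E^{-1}(z_0)(\xi,\b\xi)=0$. Feeding the first into the chain of inequalities (\ref{1.10})--(\ref{1.12}) from the proof of Theorem \ref{thm4}, the second then forces the two non-negative pointwise integrands on $\mc{X}_{z_0}$ to vanish identically: the curvature term $-R\bigl(\frac{\p u}{\p v},\xi^\alpha\frac{\delta u}{\delta z^\alpha},\frac{\p u}{\p \b v},\b\xi^\beta\frac{\delta u}{\delta\b z^\beta}\bigr)\equiv 0$, and $\n_{\xi^\alpha\frac{\delta}{\delta z^\alpha}}u^j_{\b v}\equiv 0$ on $\mc{X}_{z_0}$.

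Since $u$ is real-valued, the hypothesis $du(z_0)\neq 0$ everywhere is equivalent to $\frac{\p u}{\p v}\neq 0$ pointwise. The Hermitian Cauchy--Schwarz equality condition, together with strict negativity of $K_{\mb C}$ and nonvanishing of $\frac{\p u}{\p v}$, then forces $\xi^\alpha\frac{\delta u^j}{\delta z^\alpha}=c\,u^j_v$ for a complex function $c$ on $\mc{X}_{z_0}$. Combining this with the second pointwise identity via formula (\ref{1.14}) gives $\n_{\b v}\bigl(\xi^\alpha \frac{\delta u^j}{\delta z^\alpha}\bigr)=\xi^\alpha A^v_{\alpha\b v}u^j_v$; substituting $\xi^\alpha\frac{\delta u^j}{\delta z^\alpha}=cu^j_v$ and invoking the harmonic map equation $\n_{\b v}u^j_v=0$ reduces this to $(c_{\b v}-\xi^\alpha A^v_{\alpha\b v})u^j_v=0$, and pointwise nonvanishing of $u^j_v$ yields the scalar identity $c_{\b v}=\xi^\alpha A^v_{\alpha\b v}$. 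Equivalently, $\xi^\alpha A_\alpha|_{\mc{X}_{z_0}}=\b\p\bigl(c\frac{\p}{\p v}\bigr)$ in $A^{0,1}(\mc{X}_{z_0},T\mc{X}_{z_0})$. By Lemma \ref{lemma1}(i) the left side is harmonic and represents the Kodaira--Spencer class $\rho(\xi)\in H^1(\mc{X}_{z_0},T\mc{X}_{z_0})$; being also $\b\p$-exact, it must vanish by Hodge theory, so $\rho(\xi)=0$ and therefore $\xi=0$ since the Kodaira--Spencer map is an isomorphism for the Teichm\"uller curve, contradicting $\xi\ne 0$. I expect the main obstacle to be this equality-case analysis: extracting two independent pointwise vanishing statements from the single scalar identity $\sqrt{-1}\p\b\p\log E(\xi,\b\xi)=0$, and then translating the analytic constraint $\n_\xi\b\p^V u\equiv 0$, via (\ref{1.14}) and the harmonic equation, into $\b\p$-exactness of the Kodaira--Spencer representative.
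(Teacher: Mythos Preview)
Your proof is correct and follows essentially the same route as the paper's: the identity $\sqrt{-1}\p\b\p\log E=-E\sqrt{-1}\p\b\p E^{-1}+E^{-2}\sqrt{-1}\p E\wedge\b\p E$ together with Theorem~\ref{thm4} gives plurisubharmonicity, and the equality case is analyzed by extracting the pointwise vanishing of the curvature integrand and of $\n_{\xi^\alpha\frac{\delta}{\delta z^\alpha}}\b\p^V u$, then using strict negativity of $K_{\mb C}$ and nonvanishing of $\frac{\p u}{\p v}$ to write $\xi^\alpha\frac{\delta u^j}{\delta z^\alpha}=c\,u^j_v$, and finally concluding via (\ref{1.14}) and the harmonic equation that the Kodaira--Spencer class of $\xi$ is $\b\p$-exact. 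Two small points: the phrase ``Hermitian Cauchy--Schwarz equality condition'' is a misnomer (you are simply using that $R(X,Y,\b X,\b Y)=0$ with $K_{\mb C}<0$ forces $X\wedge Y=0$), and your use of the symbol $A_\alpha$ for the Kodaira--Spencer representative $A^v_{\alpha\b v}d\b v\otimes\frac{\p}{\p v}$ conflicts with the paper's definition (\ref{A-a-1}), where $A_\alpha$ is $u^*TN$-valued; the paper writes the vector field as $W=W^v\frac{\p}{\p v}$ (your $c$ is their $W^v$).
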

\begin{proof}
From Theorem \ref{thm4}, we get
\begin{multline}\label{1.13}
\sqrt{-1}\p\b{\p}\log E(z)=-E(z)\sqrt{-1}\p\b{\p}E(z)^{-1}\\+E(z)^{-2}\sqrt{-1}\p E(z)\wedge \b{\p}E(z)\geq 0,	
\end{multline}
which yields the plurisubharmonicity of $\log E(z)$.
To prove the strict
plurisubharmonicity we let
$\xi=\xi^\alpha\frac{\p}{\p z^\alpha}\in T_z\mc{T}$ such that
\begin{align*}
	\frac{\p^2\log E(z)}{\p z^\alpha\p\b{z}^\beta}\xi^\alpha\b{\xi}^\beta=0.
\end{align*}
Then, in view of (\ref{1.12}-\ref{1.13}), 
\begin{align*}
R\left(\frac{\p u}{\p v},\xi^\alpha\frac{\delta u}{\delta z^\alpha},\frac{\p u}{\p \b{v}},\b{\xi}^\beta\frac{\delta u}{\delta \b{z}^\beta}\right)=0,\quad 	\n_{\xi^\alpha\frac{\delta}{\delta z^\alpha}}\b{\p}^Vu=0.
\end{align*}
If $(N,g)$ has strictly negative Hermitian sectional curvature, then
\begin{align}\label{1.9}
\frac{\p u}{\p v}\wedge \xi^\alpha\frac{\delta u}{\delta z^\alpha}=0,\quad \n_{\xi^\alpha\frac{\delta}{\delta z^\alpha}}\b{\p}^Vu=0.	
\end{align}
Since 
\begin{align*}
d(u(z))=u^i_v dv\otimes \frac{\p}{\p x^i}+\o{u^i_v}	d\b{v}\otimes \frac{\p}{\p x^i}
\end{align*}
is never zero on $\mc{X}_z$, so $u^i_v$ is also never zero. From the first equation of (\ref{1.9}), there exists a vector filed $W=W^v\frac{\p}{\p v}\in A^0(\mc{X}_z, T\mc{X}_z)$ such that
\begin{align*}
\xi^\alpha\frac{\delta u^i}{\delta z^\alpha}=W^vu^i_v.	
\end{align*}
The second equation of (\ref{1.9}) is
\begin{align*}
0 &=	\n_{\xi^\alpha\frac{\delta}{\delta z^\alpha}}\b{\p}^Vu\\
&=\xi^\alpha\left(\n_{\frac{\delta }{\delta z^\alpha}}u^i_{\b{v}}\right)\delta\b{v}\otimes \frac{\p}{\p x^i}\\
&=\xi^\alpha\left(\n_{\b{v}}\frac{\delta u^i}{\delta z^\alpha}-A^v_{\alpha\b{v}}u^i_v\right)\delta\b{v}\otimes \frac{\p}{\p x^i}\\
&=\left(\n_{\b{v}}W^vu^i_v-\xi^\alpha A^v_{\alpha\b{v}}u^i_v\right)\delta\b{v}\otimes \frac{\p}{\p x^i}\\
&=\left(\p_{\b{v}} W^v-\xi^\alpha A^v_{\alpha\b{v}}\right)u^i_v\delta\b{v}\otimes \frac{\p}{\p x^i},
\end{align*}
where the last equality follows from harmonic equation $\n_{\b{v}}u^i_v=0$. Thus
\begin{align*}
\xi^\alpha A^v_{\alpha\b{v}}d\b{v}\otimes \frac{\p}{\p v}=\b{\p}W\in A^{0,1}(\mc{X}_z,T\mc{X}_z).	
\end{align*}
This implies that 
\begin{align*}
\rho\left(\xi^\alpha\frac{\p}{\p z^\alpha}\right)=\left[\xi^\alpha A^v_{\alpha\b{v}}d\b{v}\otimes \frac{\p}{\p v}\right]=[\b{\p}W]=0\in H^{1}(\mc{X}_z,T\mc{X}_z).
\end{align*}
Since $\rho: T_z\mc{T}\to H^{1}(\mc{X}_z,T\mc{X}_z)$ is injective, so
$\xi=0$. This proves the
strict plurisubharmonicity.
\end{proof}

The following result was obtained by D. Toledo \cite[Theorem 1, 3]{Toledo}.
\begin{cor}[{\cite[Theorem 1, 3]{Toledo}}]
If $(N,g)$ has non-positive Hermitian sectional curvature, then the energy function $E(z)$ is plurisubharmonic on Teichm\"uller space $\mc{T}$. 
Moreover, if $(N,g)$ has strictly negative Hermitian sectional curvature and $d(u(z))$ is never zero on $\mc{X}_z$, then $ E(z)$ is strictly plurisubharmonic.
\end{cor}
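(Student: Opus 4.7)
The plan is to deduce this corollary directly from Theorem \ref{thm5} via the algebraic identity relating $\sqrt{-1}\p\b{\p}E$ to $\sqrt{-1}\p\b{\p}\log E$ that was already highlighted in the introduction. Specifically, starting from $\log E$ and applying $\sqrt{-1}\p\b{\p}$ (using $E>0$), one obtains
\[
\sqrt{-1}\p\b{\p} E \;=\; E\,\sqrt{-1}\p\b{\p}\log E \;+\; E^{-1}\,\sqrt{-1}\,\p E\wedge \b{\p}E.
\]
The second term is manifestly a nonnegative $(1,1)$-form (it equals $|\p E|^{2}$ times the standard positive form in the $\p E$ direction), and $E>0$, so the plurisubharmonicity of $\log E$ established in Theorem \ref{thm5} immediately gives $\sqrt{-1}\p\b{\p}E\geq 0$.

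For the strict version, I would argue by contradiction: suppose $\xi=\xi^{\alpha}\tfrac{\p}{\p z^{\alpha}}\in T_{z_0}\mc{T}$ satisfies $(\sqrt{-1}\p\b{\p}E)(\xi,\b{\xi})=0$. Since both terms on the right-hand side of the displayed identity are nonnegative, each must vanish at $\xi$. In particular,
\[
(\sqrt{-1}\p\b{\p}\log E)(\xi,\b{\xi})=0.
\]
Under the assumption that $(N,g)$ has strictly negative Hermitian sectional curvature and $d(u(z_0))$ is nowhere zero, Theorem \ref{thm5} gives strict plurisubharmonicity of $\log E$ at $z_0$, forcing $\xi=0$. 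This proves strict plurisubharmonicity of $E$ at $z_0$.

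There is essentially no obstacle here: the entire content has been pushed into Theorem \ref{thm5}. The only thing to be mildly careful about is verifying the pointwise identity for $\sqrt{-1}\p\b{\p}E$ (a two-line computation from $E=e^{\log E}$, using $\p\b{\p}E=\p(E\cdot\b{\p}\log E)=\p E\wedge \b{\p}\log E+E\,\p\b{\p}\log E$ and $\p E=E\,\p\log E$) and noting that the nonnegativity of $\sqrt{-1}\,\p E\wedge \b{\p}E$ is a tautology so that vanishing of the sum really does force vanishing of each summand. With these observations, the corollary follows as an immediate consequence of Theorem \ref{thm5}.
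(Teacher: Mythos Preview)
Your proof is correct and follows essentially the same approach as the paper: the paper also writes $\sqrt{-1}\p\b{\p}E(z)=E(z)\sqrt{-1}\p\b{\p}\log E(z)+E(z)^{-1}\sqrt{-1}\p E(z)\wedge \b{\p}E(z)\geq E(z)\sqrt{-1}\p\b{\p}\log E(z)$ and then invokes Theorem~\ref{thm5}. Your treatment of the strict case is slightly more explicit than the paper's (which simply says the claim follows immediately), but the argument is the same.
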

\begin{proof}
Note that
	\begin{align*}
	\sqrt{-1}\p\b{\p}E(z)&=E(z)\sqrt{-1}\p\b{\p}\log E(z)+E(z)^{-1}\sqrt{-1}\p E(z)\wedge \b{\p}E(z)	\\
	&\geq E(z)\sqrt{-1}\p\b{\p}\log E(z).
	\end{align*}
Our claim follows immediately from Theorem \ref{thm5}.
\end{proof}

We give another application of our results
on the variation of
the energy function  in the context
of Hitchin representations. Let $\Gamma=\pi_1(\Sigma)$
be the fundamental group of a closed surface $\Sigma$ of genus $g$.
Let $G$ be a real semisimple Lie group and consider the space
of all reductive representations  $\rho: \Gamma\to G$ 
of $\Gamma$ in $G$ modulo the conjugations by elements in $G$. 
It can be identified with subsets in $G^{2g-2}$ modulo  the
diagonal action of $G$.  When $G$ is a split real form of a complex semisimple 
Lie group there is a distinguished component \cite{Hitchin} called a Hitchin component.
Given any reductive representation $\rho$ of $\Gamma$
 and given a hyperbolic  structure on $\Sigma$, i.e., given
a point $z$ in the Teichm\"u{}ller space $\mathcal T$, there is
a $\rho(\Gamma)$-equivariant  harmonic map $u: \mathbb H^2\to G/K$  from
the hyperbolic plane $\mathbb H^2$ to the Riemannian symmetric space $G/K$, the 
map is unique up to the action of $G$. In particular
the energy function $E_\rho(z)= E(u)=\int_{\mathcal X_z} |du|^2$
is well-defined.
When $G$ is  $SL(n, \mathbb R)$ it is conjectured
by Labourie that for each element $\rho$ in
the Hitchin component there is a unique minimizing point
of $E_\rho(z)$
 in the Teichm\"u{}ller space $\mathcal T$.
Recall \cite{Sampson} that the Riemannian
symmetric space has non-positive Hermitian curvature.
We have thus
\begin{cor} Let $\rho$ be a reductive representation
of $\Gamma$ in $G$. The energy function
 $E_\rho(z)$ is plurisubharmonic on $\mathcal T$.
\end{cor}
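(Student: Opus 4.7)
The plan is to derive the corollary directly from the plurisubharmonicity of $E(z)$ established in the preceding corollary (Toledo's theorem), applied with target $N = G/K$. Three points need to be verified: (a) that $G/K$ carries a Riemannian metric of non-positive Hermitian sectional curvature, (b) that the $\rho$-equivariant harmonic map fits our framework, and (c) that the variational formulas transfer to the equivariant setting.

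For (a), the Riemannian symmetric space $G/K$ of non-compact type, equipped with its $G$-invariant metric induced by the Killing form, has non-positive Hermitian sectional curvature by the classical computation of Sampson \cite{Sampson}; this is precisely the Siu--Sampson curvature condition used throughout the paper. For (b), a $\rho$-equivariant harmonic map $u : \mathbb{H}^2 \to G/K$ exists for every reductive $\rho$ by the theorem of Corlette--Donaldson; it is unique up to the centralizer $Z_G(\rho(\Gamma))$, under which the energy is invariant. Since $u$ is $\rho$-equivariant and the metric on $G/K$ is $G$-invariant, the energy density $|du|^2\, d\mu_{\Phi_z}$ and the $(1,1)$-form $\langle \partial u \wedge \bar{\partial} u\rangle$ are $\Gamma$-invariant, hence descend to $\mathcal{X}_z$ and make $E_\rho(z)$ a well-defined smooth function on $\mathcal{T}$. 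Equivalently, one can view $u$ as a section of the flat $G/K$-bundle $\mathbb{H}^2 \times_\rho G/K \to \Sigma$, so that all the local calculations of Sections \ref{sec1} and \ref{sec2} take place pointwise on $\mathcal{X}$.

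For (c), the variational formulas of Theorems \ref{thm2} and \ref{thm3} are pointwise in both $\mathcal{X}_z$ and $N$: they involve only the Christoffel symbols, the Riemann tensor of $N$ evaluated along $u$, and horizontal-vertical decompositions on $\mathcal{X}$. Since all the integrands are $\Gamma$-invariant, the formulas transfer verbatim to the equivariant setting. Running the argument of Theorem \ref{thm5} with $N = G/K$ then gives $\sqrt{-1}\,\partial\bar{\partial}\log E_\rho(z) \geq 0$, and the identity
\begin{equation*}
\sqrt{-1}\,\partial\bar{\partial}E = E\, \sqrt{-1}\,\partial\bar{\partial}\log E + E^{-1}\, \sqrt{-1}\,\partial E \wedge \bar{\partial}E
\end{equation*}
together with non-negativity of both terms on the right yields $\sqrt{-1}\,\partial\bar{\partial}E_\rho(z) \geq 0$.

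The main (and rather mild) obstacle is the bookkeeping in (b): one must justify that our fiber-integrated formulas on $\mathcal{X}/\mathcal{T}$ are insensitive to the fact that $u$ is not literally a map from $\mathcal{X}$ to $N$ but only an equivariant map from the universal cover. Once we pass to $\Gamma$-invariant densities (or the flat-bundle picture), this is automatic, and the corollary reduces to a direct invocation of the preceding result.
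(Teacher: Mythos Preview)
Your proposal is correct and follows essentially the same approach as the paper: the paper simply recalls Sampson's result \cite{Sampson} that the Riemannian symmetric space $G/K$ has non-positive Hermitian sectional curvature and then invokes the preceding plurisubharmonicity results, without further argument. Your treatment is in fact more careful than the paper's, since you explicitly address the passage from equivariant maps on $\mathbb{H}^2$ to $\Gamma$-invariant densities on $\mathcal{X}_z$, a point the paper leaves implicit.
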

It might be interesting to pursue the study of Labourie's conjecture
using our variational formulas.
\section{Energy functions and potentials of Weil-Petersson metric}\label{sec3}

We assume in this section that $N$ is a complex  manifold
with a Hermitian metric $h$. It turns out that in this case
there is a close relation between the second variation of
the energy of $u(z): \mathcal X_z\to N$ and
Weil-Petersson metric.

Let
$\{s^i\}_{1\leq i\leq \dim_{\mb{C}} N}$ be a local holomorphic
coordinates system of $N$.
The Riemannian metric $g=\Re\ h$ is
$$ g=g_{i\bar j}(ds^i\otimes d\bar s^j +d\bar s^j\otimes ds^i)$$ where
$$g_{i\bar j}=g \left(\frac{\p}{\p s^i}, \frac{\p}{\p \bar s^j}\right).$$
The associated two form $\omega=-\Im\ h$
is a two form so that $h=g-\sqrt{-1} \omega$, and
$$g_{jk}=g_{\bar j\bar k}=0,\quad g_{j\bar k}=g_{\bar k j},\quad  g_{\bar j k}=\bar g_{j\bar k}.$$
For any  smooth map  $u: (\mc{X}_z,\Phi_z)\to (N,g)$, 
$$du=u^i_v dv\otimes \frac{\p}{\p s^i} + u^i_{\bar v} d\bar v\otimes \frac{\partial}{\partial s^i}+\o{ u^j_{\b{v}} }dv\otimes \frac{\p}{\p \bar s^j}+ \o{u^j_{v} }d\bar v\otimes \frac{\p}{\p \bar s^j}\in A^1(\mc{X}_z, u^*TN).$$
Hence 
\begin{align}\label{1.22}
	|du|^2=\phi^{v\bar v}g_{i\bar j}(u^i_v \overline{u^j_v}+ u^i_{\bar v}\overline{u^j_{\bar v}}  +\o{u^j_{\b{v}}}u^i_{\b{v}}+ \o{u^j_v}u^i_v)=2\phi^{v\bar v}g_{i\bar j}(u^i_v \overline{u^j_v}+ u^i_{\bar v}\overline{u^j_{\bar v}}).
\end{align}
So the energy is given by
\begin{align}\label{1.23}
	E(u)=\int_{\mc{X}_z} \frac{1}{2}|du|^2 \sqrt{-1}\phi_{v\bar v}dv\wedge d\bar v=\int_{\mc{X}_z} g_{i\bar j}(u^i_v \overline{u^j_v}+ u^i_{\bar v}\overline{u^j_{\bar v}}) \sqrt{-1}dv\wedge d\bar v.
\end{align}
Now we assume that $u:\mc{X}\to N$ is a smooth map such that each
$u(z):\mc{X}_z\to N$ is a harmonic map, and $u(z_0)$ is a holomorphic
map (it is obviously a harmonic map by harmonic equation
(\ref{harmonic})). For notational convinience we write $z_0=o$.
Then $E(z)=E(u(z))$ is a smooth map on Teichm\"uller map $\mc{T}$ and
from Theorem \ref{thm1} we have
  \begin{align*}
  \frac{\p E(z)}{\p z^\alpha} &=-\langle A_\alpha, du\rangle\\
  &=-\langle A^v_{\alpha\b{v}}u^i_vd\b{v}\otimes\frac{\p}{\p s^i}+A^v_{\alpha\b{v}}\o{u^j_{\b{v}}}d\b{v}\otimes\frac{\p}{\p \b{s}^j}, du \rangle\\
  &=-2\int_{\mc{X}_z}g_{i\b{j}}u^i_v\o{u^j_{\b{v}}}A^v_{\alpha\b{v}}\sqrt{-1}dv\wedge d\b{v}.	
  \end{align*}
Evaluating at $o\in \mc{T}$ and using $u(o)$ is holomorphic we get
 \begin{align}\label{1.20}
 \frac{\p E(z)}{\p z^\alpha}|_{z=o}=0.
 \end{align}
The second variation of energy at the point $o\in\mc{T}$ is
\begin{aligns}\label{1.15}
\frac{\p^2E(z)}{\p z^\alpha\p\b{z}^\beta}|_{o}&=-2\int_{\mc{X}_{o}}g_{i\b{j}}u^i_v\frac{\p}{\p\b{z}^\beta}\o{u^j_{\b{v}}}A^v_{\alpha\b{v}}\sqrt{-1}dv
\wedge d\b{v}	\\
&=-2\int_{\mc{X}_{o}}g_{i\b{j}}u^i_v\o{\n_{\frac{\delta}{\delta z^\beta}}u^j_{\b{v}}}A^v_{\alpha\b{v}}\sqrt{-1}dv\wedge d\b{v}\\
&=-2\int_{\mc{X}_{o}}g_{i\b{j}}u^i_v\o{\n_{\b{v}}\frac{\delta u^j}{\delta z^\beta}-A^{v}_{\beta\b{v}}u^j_v}A^v_{\alpha\b{v}}\sqrt{-1}dv\wedge d\b{v}\\
&=2\int_{\mc{X}_{o}}g_{i\b{j}}u^i_v\o{u^j_v}A^v_{\alpha\b{v}}\o{A^v_{\beta\b{v}}}\sqrt{-1}dv\wedge d\b{v}\\
&\quad+2\int_{\mc{X}_{o}}g_{i\b{j}}\n_v(u^i_v)A^v_{\alpha\b{v}}\o{\frac{\delta u^j}{\delta z^\beta}}\sqrt{-1}dv\wedge d\b{v},
\end{aligns}
where the first equality follows from the holomorphicity of $u(o)$, the second equality follows from harmonic equation (\ref{harmonic1}) and the definition of horizontal subbundle (\ref{HV}), the third equality holds by (\ref{1.14}), and the last equality holds by Stokes' theorem and Lemma \ref{lemma1} (i),
\begin{align*}
\n_v A^v_{\alpha\b{v}}=\n_v(A_{\alpha\b{v}\b{v}}\phi^{\b{v}v})=\p_v A_{\alpha\b{v}\b{v}}\phi^{\b{v}v}=0.	
\end{align*}
Here
\begin{align*}
	\n_v u^i_v=\p_v u^i_v+\Gamma^i_{kl}u^k_v u^l_v-\phi_v u^i_v.
\end{align*}
Since $u(o):\mc{X}_z\to N$ is holomorphic,  
\begin{align*}
d(u(o))=u^i_v(o)dv\otimes\frac{\p}{\p x^i}\in A^0(\mc{X}_z,T^*\mc{X}_z\otimes u(o)^*TN) 	
\end{align*}
Let $\n$ denote the natural connection on the bundle $T^*\mc{X}_z\otimes u(o)^*TN$ induced from the Chern connection of $(T^*\mc{X}_z, e^{-\phi})$ and the pullback of Levi-Civita connection $(N,g)$. By conjugation, we also can get a connection on $\o{T^*\mc{X}_z}\otimes u(o)^*TN$, we also denote it by $\n$. 
Then 
\begin{aligns}\label{1.36}
\n d(u(o))&=\left(\p_v u^i_v(o)+\Gamma^i_{kl}u^k_v(o) u^l_v(o)-\phi_v u^i_v(o)\right)	dv\otimes dv\otimes \frac{\p}{\p x^i}\\
&=(\n_vu^i_v)(o)dv\otimes dv\otimes\frac{\p}{\p x^i}.
\end{aligns}
Now we assume that
$u(z):\mc{X}_0\to N$ is totally geodesic
(see e.g. \cite[Definition 1.2.1]{Xin}), i.e.
\begin{align}
\n d(u(0))\equiv 0	
\end{align}
The equation  (\ref{1.15}) becomes
\begin{align}\label{1.17}
	\frac{\p^2E(z)}{\p z^\alpha\p\b{z}^\beta}|_{z=o}=2\int_{\mc{X}_{o}}g_{i\b{j}}u^i_v\o{u^j_v}A^v_{\alpha\b{v}}\o{A^v_{\beta\b{v}}}\sqrt{-1}dv\wedge d\b{v}.
\end{align}
By the harmonic equation $\n_{\b{v}}u^i_v=0$ and the assumption $\n_{v}u^i_v=0$ on $\mc{X}_{o}$, 
\begin{align*}
\n_v(g_{i\b{j}}u^i_v\o{u^j_v}\phi^{v\b{v}})=\frac{\p}{\p v}(g_{i\b{j}}u^i_v\o{u^j_v}\phi^{v\b{v}})=g_{i\b{j}}(\n_v u^i_v\o{u^j_v}+u^i_v\o{\n_{\b{v}}u^j_v})\phi^{v\b{v}}=0.
\end{align*}
This implies that  $(g_{i\b{j}}u^i_v\o{u^j_v}\phi^{v\b{v}})$ is a constant on $\mc{X}_{o}$ and it equals  
\begin{align}\label{1.16}
g_{i\b{j}}u^i_v\o{u^j_v}\phi^{v\b{v}}(o)=\frac{\int_{\mc{X}_{o}}(g_{i\b{j}}u^i_v\o{u^j_v}\phi^{v\b{v}})\sqrt{-1}\phi_{v\b{v}}dv\wedge d\b{v}}{\int_{\mc{X}_{o}}\sqrt{-1}\phi_{v\b{v}}dv\wedge d\b{v}}	=\frac{E(o)}{2\pi(2g-2)}.
\end{align}
Substituting (\ref{1.16}) into (\ref{1.17}), one has
\begin{aligns}\label{1.18}
	\frac{\p^2E(z)}{\p z^\alpha\p\b{z}^\beta}|_{z=o}&=\frac{E(o)}{2\pi(g-1)}\int_{\mc{X}_{o}}A^v_{\alpha\b{v}}\o{A^v_{\beta\b{v}}}\sqrt{-1}\phi_{v\b{v}}dv\wedge d\b{v}\\
	&=\frac{E(o)}{2\pi(g-1)}G_{\alpha\b{\beta}}(o),
\end{aligns}
see (\ref{1.19}) for the definition of $G_{\alpha\b{\beta}}(z)$. 
By (\ref{1.20}), the first variation of energy at $o$ vanishes, so the second variation of $\log E$ satisfies
\begin{align}\label{1.21}
	\sqrt{-1}\p\b{\p}\log E(z)|_{z=o}=\frac{1}{E_0}\sqrt{-1}\p\b{\p}E(z)|_{z=o}=\frac{1}{2\pi(g-1)}\omega_{WP}.
\end{align}

Namely we have
\begin{thm}\label{thm6}
If $u(o)$ is  holomorphic (resp. anti-holomorphic) and totally geodesic
on $\mc{X}_{o}$, then 
	\begin{align*}
	\sqrt{-1}\p\b{\p}\log E(z)|_{z=o}=\frac{\omega_{WP}}{2\pi(g-1)}.	
	\end{align*}
\end{thm}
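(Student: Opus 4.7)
The plan is to exploit the holomorphicity of $u(o)$ to kill the first variation at $o$, and then to use the totally geodesic hypothesis to reduce the second variation of $E$ to precisely the Weil-Petersson pairing $G_{\alpha\bar\beta}$. Since the first variation vanishes at $o$, the identity
\[
\sqrt{-1}\p\b{\p}\log E(z)|_{z=o}=\frac{1}{E(o)}\sqrt{-1}\p\b{\p}E(z)|_{z=o}
\]
will then translate the statement about $E$ into the desired statement about $\log E$.

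First, I would apply Theorem \ref{thm1} and rewrite $\partial_{z^\alpha}E$ as an integral of $g_{i\bar j}\,u^i_v\,\overline{u^j_{\bar v}}\,A^v_{\alpha\bar v}$ on the fiber. Because $u(o)$ is holomorphic we have $u^j_{\bar v}(o)\equiv 0$, which forces $\partial_{z^\alpha}E(z)|_{o}=0$. Differentiating once more in $\bar z^\beta$ and using the same vanishing, only the term where $\partial/\partial\bar z^\beta$ hits $\overline{u^j_{\bar v}}$ survives; then I would rewrite this $\bar z^\beta$-derivative as a covariant derivative $\nabla_{\delta/\delta z^\beta}u^j_{\bar v}$ (the connection term involving $a^v_\beta u^j_v$ vanishes because $u(o)$ is holomorphic), and use the identity (\ref{1.14}) together with the harmonic equation $\nabla_{\bar v}u^j_v=0$ to convert it to $-A^v_{\beta\bar v}u^j_v + \nabla_{\bar v}(\delta u^j/\delta z^\beta)$. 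An integration by parts using Stokes' theorem and Lemma \ref{lemma1}(i) (namely $\partial_v A^v_{\alpha\bar v}=0$) moves the $\bar v$-derivative onto $u^i_v$, so the second variation splits into a curvature-like piece $\int g_{i\bar j}u^i_v\overline{u^j_v}A^v_{\alpha\bar v}\overline{A^v_{\beta\bar v}}$ and a piece involving $\nabla_v u^i_v$ paired against $\overline{\delta u^j/\delta z^\beta}$.

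The totally geodesic assumption $\nabla d(u(o))\equiv 0$ is exactly what eliminates the second piece, since from (\ref{1.36}) it forces $\nabla_v u^i_v=0$ on $\mc{X}_o$. At this point I am left with
\[
\frac{\partial^2 E}{\partial z^\alpha \partial\bar z^\beta}\bigg|_{o}=2\int_{\mc{X}_o} g_{i\bar j}\,u^i_v\,\overline{u^j_v}\,A^v_{\alpha\bar v}\,\overline{A^v_{\beta\bar v}}\,\sqrt{-1}\,dv\wedge d\bar v.
\]
The key observation is then that the pointwise energy density $\psi:=g_{i\bar j}u^i_v\overline{u^j_v}\phi^{v\bar v}$ is constant on $\mc{X}_o$: differentiating $\psi$ in $v$ and using both $\nabla_v u^i_v=0$ (totally geodesic) and $\nabla_{\bar v}u^j_v=0$ (harmonic equation, combined with holomorphicity so that the Christoffel terms in mixed indices vanish at $o$) gives $\partial_v\psi=0$, and similarly $\partial_{\bar v}\psi=0$.

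Since $\psi$ is constant, its value equals $E(o)/\mathrm{Vol}(\mc{X}_o,\Phi_o)=E(o)/(2\pi(2g-2))$ by Gauss-Bonnet. Substituting this constant factor out of the integral converts the remaining integral into $G_{\alpha\bar\beta}(o)$ by definition (\ref{1.19}) of the Weil-Petersson metric, yielding
\[
\sqrt{-1}\p\b{\p}E(z)|_{o}=\frac{E(o)}{\pi(g-1)}\,\omega_{WP}.
\]
Dividing by $E(o)$ and combining with the vanishing of the first variation gives the claim. The main obstacle is the bookkeeping in the second-variation computation: making sure that the boundary/Christoffel terms that appear when commuting $\partial/\partial\bar z^\beta$ with the covariant operators genuinely vanish at $o$ due to the holomorphicity, and that the Stokes' theorem step is legitimate (which is why Lemma \ref{lemma1}(i) is crucial). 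The anti-holomorphic case is completely analogous, with the roles of $u^i_v$ and $u^i_{\bar v}$ interchanged.
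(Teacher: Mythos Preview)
Your proposal is correct and follows essentially the same route as the paper: vanishing of the first variation from holomorphicity via Theorem~\ref{thm1}, the second-variation computation (\ref{1.15}) using (\ref{1.14}) and Stokes' theorem with Lemma~\ref{lemma1}(i), elimination of the $\nabla_v u^i_v$ term by the totally geodesic hypothesis, and constancy of the energy density to extract $G_{\alpha\bar\beta}$. Two small slips to fix: Lemma~\ref{lemma1}(i) gives $\partial_v A_{\alpha\bar v\bar v}=0$, i.e.\ $\nabla_v A^v_{\alpha\bar v}=0$, not $\partial_v A^v_{\alpha\bar v}=0$; and your displayed formula for $\sqrt{-1}\p\b\p E|_o$ is off by a factor of $2$ --- substituting $\psi=E(o)/(2\pi(2g-2))$ into $2\int\psi\,\phi_{v\bar v}A^v_{\alpha\bar v}\overline{A^v_{\beta\bar v}}$ yields $\frac{E(o)}{2\pi(g-1)}G_{\alpha\bar\beta}$, which after dividing by $E(o)$ gives exactly the claimed $\omega_{WP}/(2\pi(g-1))$.
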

Specifying to the case
when $N$ is also a Riemann surface
we  obtain Fischer and Tromba's theorem; see
\cite{FT} and \cite[Corollary 5.8]{Wolf1}.
\begin{cor}[{\cite[Theorem 2.6]{FT}}]
If $u(o)=Id: (\mc{X}_{o}, \Phi_{o})\to (\mc{X}_{o}, \Phi_{o)}$ is identity, then  	
\begin{align*}
\sqrt{-1}\p\b{\p}E(z)|_{z=o}=2\omega_{WP}.	
\end{align*}
\end{cor}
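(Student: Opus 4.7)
The plan is to deduce this corollary directly from Theorem \ref{thm6} together with two elementary observations: that the identity map satisfies the hypotheses of that theorem, and that the hyperbolic area of $\mathcal{X}_o$ is controlled by Gauss-Bonnet.

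First I would verify that $u(o) = \mathrm{Id}$ fits into the framework of Theorem \ref{thm6}. The identity map is tautologically holomorphic. To check that it is totally geodesic, recall that the connection $\nabla$ on $T^*\mathcal{X}_o \otimes u(o)^*T\mathcal{X}_o$ is induced by the Chern connection on $(T^*\mathcal{X}_o, e^{-\phi})$ and the pullback of the Levi-Civita connection on the target. For $u(o) = \mathrm{Id}$ these two connections are induced by the same hyperbolic metric $\Phi_o$, so in local holomorphic coordinates one has $u^i_v = \delta^i_v$ and $\nabla_v u^i_v = \partial_v(1) + \Gamma^v_{vv} - \phi_v = 0$ by (\ref{1.36}), confirming $\nabla d(u(o)) \equiv 0$.

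Next I would evaluate the energy $E(o)$ explicitly. Plugging $u^i_v = \delta^i_v$ and $u^i_{\bar v} = 0$ into formula (\ref{1.23}) gives
\begin{equation*}
E(o) = \int_{\mathcal{X}_o} \phi_{v\bar v}\, \sqrt{-1}\, dv \wedge d\bar v = \operatorname{Area}(\mathcal{X}_o, \Phi_o) = 4\pi(g-1),
\end{equation*}
where the last equality is Gauss-Bonnet applied to the hyperbolic metric of constant curvature $-1$ on a surface of genus $g$.

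Finally, Theorem \ref{thm6} yields $\sqrt{-1}\partial\bar\partial \log E(z)|_{z=o} = \omega_{WP}/(2\pi(g-1))$. Because $u(o)$ is holomorphic, the first variation (\ref{1.20}) vanishes at $o$, so the identity $\sqrt{-1}\partial\bar\partial E = E \sqrt{-1}\partial\bar\partial \log E + E^{-1} \sqrt{-1}\partial E \wedge \bar\partial E$ collapses at $z=o$ to $\sqrt{-1}\partial\bar\partial E(z)|_{z=o} = E(o) \sqrt{-1}\partial\bar\partial \log E(z)|_{z=o}$. Combining with the computation of $E(o)$ gives $\sqrt{-1}\partial\bar\partial E(z)|_{z=o} = 4\pi(g-1) \cdot \omega_{WP}/(2\pi(g-1)) = 2\omega_{WP}$. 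There is no real obstacle here; the only thing to be careful about is the totally geodesic check and the Gauss-Bonnet normalization of the hyperbolic area, both of which are essentially bookkeeping.
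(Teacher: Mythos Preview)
Your proposal is correct and follows essentially the same route as the paper's own proof: verify that the identity map is holomorphic and totally geodesic (via $\Gamma^v_{vv}=\phi_v$, which is exactly your observation that the two connections coincide), compute $E(o)=2\pi(2g-2)=4\pi(g-1)$ by Gauss--Bonnet, and then combine Theorem~\ref{thm6} with the vanishing of the first variation (\ref{1.20}) to pass from $\log E$ to $E$. The only cosmetic difference is that the paper cites the intermediate identity (\ref{1.21}) directly rather than re-deriving it from $\sqrt{-1}\p\b\p E = E\sqrt{-1}\p\b\p\log E + E^{-1}\sqrt{-1}\p E\wedge\b\p E$.
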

\begin{proof}
In this case, $u(o)$ is holomorphic, $u^i_v(o)=\delta^i_v$ and 
\begin{align*}
\Gamma^i_{jk}=\p_v\log \phi_{v\b{v}}=\phi_v.	
\end{align*}
So 
\begin{align*}
	(\n_vu^i_v)(o)=(\p_v u^i_v+\Gamma^i_{kl}u^k_v u^l_v-\phi_v u^i_v)(o)=0
\end{align*}
and 
\begin{align*}
E(o)=	\int_{\mc{X}_{o}}  \sqrt{-1}\phi_{v\b{v}}dv\wedge d\bar v=2\pi(2g-2).
\end{align*}
So the identity
(\ref{1.21}) becomes
$$
\sqrt{-1}\p\b{\p}E(z)|_{z=o}
=
{E_0}\sqrt{-1}\p\b{\p}\log E(z)|_{z=o}=2\omega_{WP}
$$
completing the proof.
\end{proof}
We may also 
apply our result above, as in Section 3, to the
energy function related to a reductive
representation
$\rho: \Gamma=\pi_1(\Sigma) 
\to G$.
 Recall the defintion of the energy 
 function $E_\rho(z)$ for a general element $z$ in the Teichm\"u{}ller
 space in Section 3.
Now let $G$ be a Hermitian semisimple Lie group with
$G/K$ a non-compact Hermitian symmetric space.
Let $\rho: PSL(2, \mathbb R)\to G$ be a fixed representation
 with the induced totally geodesic map 
$\mathbb H^2=PSL(2, \mathbb R)/SO(2) \to G/K$ being
 holomorphic. Let $\mathcal X_o=\mathbb H^2/\Gamma_o $ be
fixed Riemann surface with $\Gamma_o $ a representation of $\Gamma$ in
 $PSL(2, \mathbb R)$.
 The representation  $\rho$ then defines also a representation
 of $\Gamma$, also denoted by $\rho$, i.e. $\rho: \Gamma\to \Gamma_0\subset  
 PSL(2, \mathbb R)\stackrel{\rho}{\to} G$.
 The (lifted) $\rho(\Gamma)$-equivariant map $u(z)$
 for $z=o$ is then holomorphic and totally geodesic
$u(o): \mathbb H^2=PSL(2, \mathbb R)/SO(2) \to G/K$.
We can compute the second variation
of $E_\rho(z)$ at  $z=o$.
\begin{cor} Let $\rho$ be the reductive representation
of $\pi_1(\mathcal X_0)$
in  $G$ obtained from a representation
of $PSL(2, \mathbb R)$ in $G$ with the totally geodesic map
$\mathbb H^2\to G/K$ being holomorphic. Then the second variation
of $E_\rho(z)$ at $z=o$ is 
	\begin{align*}
	\sqrt{-1}\p\b{\p}\log E_\rho(z)|_{z=o}=\frac{\omega_{WP}}{2\pi(g-1)}.	
	\end{align*}
\end{cor}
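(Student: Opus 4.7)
The plan is to reduce the statement directly to Theorem \ref{thm6}, applied with target $N = G/K$ equipped with its $G$-invariant Hermitian metric. The only substantive step is to verify that, at $z=o$, the $\rho(\Gamma)$-equivariant harmonic map $u(o)$ satisfies the two hypotheses of Theorem \ref{thm6}: it is holomorphic and totally geodesic.

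The first step is to identify $u(o)$. Because the hyperbolic structure on $\mathcal{X}_o$ comes from $\Gamma_o \subset PSL(2,\mathbb R)$, the composition $\rho: \Gamma \to \Gamma_o \to PSL(2,\mathbb R) \to G$ implies that the $PSL(2,\mathbb R)$-equivariant map $\mathbb H^2 \to G/K$ coming from the fixed homomorphism is, in particular, $\rho(\Gamma)$-equivariant and descends to give a well-defined harmonic map $u(o): \mathcal{X}_o \to G/K$; by uniqueness of the equivariant harmonic map (up to $G$) this is the $u(o)$ in the definition of $E_\rho$. It is holomorphic by hypothesis, and it is totally geodesic because any map between Riemannian symmetric spaces induced by a Lie group homomorphism intertwining the isotropy representations is automatically totally geodesic (a standard fact about symmetric subspaces). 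Both properties are $\Gamma_o$-invariant and hence descend to $\mathcal{X}_o$.

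Second, I would observe that the derivation in Section \ref{sec3} leading to Theorem \ref{thm6} never used compactness of $N$: the integrations are carried out on the compact Riemann surface $\mathcal{X}_z$, the curvature and covariant derivative terms are tensorial, and the key geometric inputs are only (i) holomorphicity of $u(o)$ and (ii) the vanishing $\nabla d(u(o)) = 0$ from totally geodesicity, together with the harmonic equation. Hence the computation of $\partial\bar\partial E_\rho$ at $z=o$ and the normalization via the constant value $g_{i\bar j}u^i_v \overline{u^j_v}\phi^{v\bar v}(o) = E_\rho(o)/(2\pi(2g-2))$ goes through verbatim. Combining this with the vanishing of the first variation at $o$ (which follows exactly as in (\ref{1.20}) using holomorphicity and Theorem \ref{thm1}) gives
\[
\sqrt{-1}\partial\bar\partial \log E_\rho(z)|_{z=o} = \frac{1}{E_\rho(o)} \sqrt{-1}\partial\bar\partial E_\rho(z)|_{z=o} = \frac{\omega_{WP}}{2\pi(g-1)}.
\]

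The main (mild) obstacle is the equivariant nature of the setup: one must be confident that Theorem \ref{thm1} and Theorem \ref{thm3} are valid for equivariant harmonic maps into a non-compact target, but this is immediate since all formulas are local on $\mathcal{X}_z$ and use only smoothness of $u$ and the harmonic equation pointwise. No other genuine difficulty arises; the corollary is essentially a dictionary-translation of Theorem \ref{thm6} into the representation-variety language.
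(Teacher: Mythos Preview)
Your proposal is correct and mirrors the paper's own argument: the corollary is stated immediately after the setup describing $\rho$ and $u(o)$, and is simply an instance of Theorem \ref{thm6} once one knows that the $PSL(2,\mathbb R)$-orbit map $\mathbb H^2\to G/K$ is holomorphic and totally geodesic. Your additional remarks about the equivariant/non-compact target being harmless (since all integrals live on the compact fiber $\mc{X}_z$) are accurate and make explicit what the paper leaves implicit.
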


\section{The second variation of the energy of $u(z): M\to \mathcal
  X_z$ and Weil-Petterson metric}
As we explained in the introduction we may also
consider harmonic maps  $u(z): (M, \omega_g)\to \mathcal X_z$; see
\cite{KWZ}
and references therein.
We assume further that $(M, \omega_g)$ is a compact K\"ahler manifold, i.e. $\omega_g$ is a closed and positive $(1,1)$-form. Let $\{s^i\}_{1\leq i\leq n}$ denote  local  coordinates of $M$, $n=\dim_{\mb{C}} M$. Locally, $\omega_g$ can be expressed as
\begin{align*}
\omega_g=\sqrt{-1}g_{i\b{j}}ds^i\wedge d\b{s}^j
\end{align*}
for some positive definite hermitian matrix $(g_{i\b{j}})$. The associated Riemannian metric $g$ is given by 
\begin{align*}
g=g_{i\b{j}}(ds^i\otimes d\b{s}^j+d\b{s}^j\otimes ds^i).	
\end{align*}
For any smooth map  $u(z): (M, g)\to (\mc{X}_z,\Phi_z)$, $du$ is the section of bundle $T^*M\otimes u^*T_{\mb{C}}\mc{X}_z$, for which there is an induced metric $g^*\otimes \Phi_z$ from $(M^n,g)$ and $(\mc{X}_z,\Phi_z)$.
 Let $\{v\}$ denote the holomorphic coordinates of Riemann surface $\mc{X}_z$. In the same way as in (\ref{1.22}), (\ref{1.23}), one has
\begin{align}\label{1.27}
|du|^2=2g^{\b{j}i}(u_i^v\o{u^v_j}+u^v_{\b{j}}\o{u^v_{\b{i}}})\phi_{v\b{v}}
\end{align}
and the energy is given by
\begin{align*}
E(u)=\frac{1}{2}\int_M |du|^2d\mu_g=\int_M 	g^{\b{j}i}(u_i^v\o{u^v_j}+u^v_{\b{j}}\o{u^v_{\b{i}}})\phi_{v\b{v}}d\mu_g.
\end{align*}
Here $$d\mu_g=\frac{\omega_g^n}{n!}$$ denotes  Riemannian volume form determined by $g$. The harmonic equation is 
\begin{align}\label{harmonic2}
g^{\b{j}i}\n_i u^{v}_{\b{j}}=g^{\b{j}i}(\p_i u^v_j+\phi_vu^v_i u^v_{\b{j}})=0,
\end{align}
see e.g. \cite[(1.20)]{KWZ}.
We assume that $u:M\to \mc{X}$ is a smooth map such that $u(z): M\to
\mc{X}_z$ is a harmonic map and we put $E(z):=E(u(z))$
the energy function on Teichm\"uller space $\mc{T}$.
Similar to (\ref{A-a-1}) we define (with some abuse of notation)
\begin{align*}
A_{\alpha}=A_{\alpha\b{v}\b{v}}\o{u^v_j}\phi^{v\b{v}}d\b{s}^j\otimes \frac{\p}{\p v}+A_{\alpha\b{v}\b{v}}\o{u^v_{\b{i}}}\phi^{v\b{v}}ds^i\otimes \frac{\p}{\p v}\in A^1(M, u^*T\mc{X}_z);
\end{align*}
Let $\Delta=\n\n^*+\n^*\n$ be the Hodge-Laplace operator on $A^{\ell}(M, u^*T\mc{X}_z)$ (see e.g. \cite[Subsection 1.2]{KWZ}), and set 
\begin{align*}
\mc{L}=\Delta+\frac{1}{2}|du|^2,\quad
	\mc{G}=2g^{\b{j}i}\phi_{v\b{v}}u^v_i u^v_{\b{j}} \frac{\p}{\p v}\otimes d\b{v}\in \text{Hom}(u^*\o{T\mc{X}_z},u^*T\mc{X}_z).	
\end{align*}

\begin{thm}[{\cite[Theorem 0.5, 0.6]{KWZ}}]
	The first and the second variation of the energy are given by
	\begin{align}\label{1.24}
		\frac{\p E(z)}{\p z^\alpha}=\langle A_\alpha, du\rangle=2\int_M A_{\alpha\b{v}\b{v}}\o{u^v_i}\o{u^v_{\b{j}}}g^{\b{j}i}d\mu_g
	\end{align}
and
	\begin{multline}\label{1.25}
	\frac{\p^2E(z)}{\p z^{\alpha}\p\b{z}^{\beta}}=\frac{1}{2}\int_M c(\phi)_{\alpha\b{\beta}}|du|^2d\mu_g\\+\langle(Id-\n\left(\mc{L}-\mc{G}\mc{L}^{-1}\o{\mc{G}}\right)^{-1}\n^*)A_{\alpha},A_{\beta}\rangle.	
	\end{multline}
\end{thm}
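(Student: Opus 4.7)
Since the domain $M$ is now fixed and only the target $\mc{X}_z$ varies, the approach is in some sense dual to that of Sections \ref{sec2}-\ref{sec3}: I would differentiate the fiber integral $E(z)=\int_M e(z)\,d\mu_g$ directly in $z^\alpha$, combining the K\"ahler structure on $M$ with the horizontal-vertical decomposition on $\mc{X}$, and using the harmonic equation (\ref{harmonic2}) as a first-order stationarity condition to eliminate the ``interior'' variations of $u$.

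\textbf{First variation.} Viewing $u$ as a smooth map $M\times\mc{T}\to\mc{X}$ with $\pi\circ u=\mathrm{pr}_\mc{T}$, I differentiate the integrand
$$e(z)=g^{\b{j}i}\phi_{v\b{v}}\bigl(u^v_i\o{u^v_j}+u^v_{\b{j}}\o{u^v_{\b{i}}}\bigr)$$
in $z^\alpha$. The chain rule produces contributions where $\p_\alpha$ hits the metric weight $\phi_{v\b{v}}(u(z,\cdot))$ and where $\p_\alpha$ hits $u^v_i$, $u^v_{\b{j}}$. To handle the latter I use the horizontal lift $\delta/\delta z^\alpha=\p/\p z^\alpha+a^v_\alpha\p/\p v$ with $a^v_\alpha=-\phi^{v\b{v}}\phi_{\alpha\b{v}}$, write $\p_\alpha u^v=\delta u^v/\delta z^\alpha-a^v_\alpha$, and integrate by parts in the $s^i$ variables using the K\"ahler structure on $M$. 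The integration by parts produces $g^{\b{j}i}\n_i u^v_{\b{j}}$ multiplying $\delta u^v/\delta z^\alpha$, which vanishes by (\ref{harmonic2}). What remains is $\p_{\b{v}}a^v_\alpha=A^v_{\alpha\b{v}}$ paired against the $(0,1)$-part of $du$; the two occurrences (from the $u^v_j$-term and the $u^v_{\b{i}}$-term) combine into $2\int_M A_{\alpha\b{v}\b{v}}\o{u^v_i}\o{u^v_{\b{j}}}g^{\b{j}i}d\mu_g=\langle A_\alpha,du\rangle$.

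\textbf{Second variation.} I apply $\p/\p\b{z}^\beta$ to (\ref{1.24}). The result splits into two kinds of contributions. First, the terms where $\p_{\b{\beta}}$ hits $A_{\alpha\b{v}\b{v}}$ and the metric weight $\phi_{v\b{v}}(u(z))$; after taking horizontal lifts and invoking Lemma \ref{lemma0} (which identifies the horizontal curvature as $c(\phi)_{\alpha\b{\beta}}=\phi_{\alpha\b{\beta}}-\phi^{v\b{v}}\phi_{\alpha\b{v}}\phi_{v\b{\beta}}$), these combine into $\frac{1}{2}\int_M c(\phi)_{\alpha\b{\beta}}|du|^2 d\mu_g$. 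Second, the terms where $\p_{\b{\beta}}$ hits $u^v_j$ or $u^v_{\b{i}}$, which produce a pairing of $A_\alpha$ with the infinitesimal variation $\delta u^v/\delta\b{z}^\beta$ of the harmonic map. To express this variation in terms of $A_\beta$ I linearize the harmonic equation (\ref{harmonic2}) in $\b{z}^\beta$. Because the target is a Riemann surface, the second $v$-derivative of $\phi$ generates a $(0,2)$-type coupling, giving a system
$$\mc{L}\,\frac{\delta u^v}{\delta\b{z}^\beta}+\mc{G}\,\o{\frac{\delta u^v}{\delta z^\beta}}=\n^*A_\beta$$
together with its complex conjugate, where $\mc{L}=\Delta+\tfrac{1}{2}|du|^2$ is the Jacobi operator and $\mc{G}$ is the $(0,2)$-coupling from the target complex structure. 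Solving this $2\times 2$ operator-matrix system by Schur complement gives $\delta u^v/\delta\b{z}^\beta=(\mc{L}-\mc{G}\mc{L}^{-1}\o{\mc{G}})^{-1}\n^*A_\beta$ modulo a piece that pairs trivially with $A_\alpha$, and substituting back produces the projector $Id-\n(\mc{L}-\mc{G}\mc{L}^{-1}\o{\mc{G}})^{-1}\n^*$ applied to $A_\alpha$, yielding (\ref{1.25}).

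\textbf{Principal obstacle.} The main technical difficulty is the off-diagonal $(0,2)$-coupling $\mc{G}$ in the linearized harmonic equation: because of the nonlinear term $\phi_v u^v_i u^v_{\b{j}}$, the Jacobi operator intertwines $\delta u^v/\delta\b{z}^\beta$ with its conjugate, so one cannot simply invert $\mc{L}$ and must appeal to the Schur complement $\mc{L}-\mc{G}\mc{L}^{-1}\o{\mc{G}}$. Establishing invertibility of this operator --- which amounts to strict stability of $u$ as a minimizer of $E$ among maps $M\to\mc{X}_z$ --- and keeping careful track of complex conjugates so that the coupling closes into the stated Schur form is the main analytic step.
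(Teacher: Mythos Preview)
The paper does not prove this theorem: it is quoted verbatim from \cite[Theorem 0.5, 0.6]{KWZ} and used as a black box in Section 5. So there is no ``paper's own proof'' to compare against here, and your proposal is really a reconstruction of what presumably happens in \cite{KWZ}.

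That said, your outline is the expected one and is structurally sound. For the first variation, differentiating the integrand, replacing $\p_\alpha u^v$ by its horizontal/vertical decomposition relative to $a^v_\alpha$, and integrating by parts on $M$ to invoke (\ref{harmonic2}) is exactly how one isolates the $A^v_{\alpha\b{v}}$-contribution; the K\"ahler hypothesis on $M$ is what makes the integration by parts clean. For the second variation, your identification of the two pieces --- the horizontal curvature $c(\phi)_{\alpha\b{\beta}}$ from Lemma \ref{lemma0}, and the Jacobi-type contribution from the implicit dependence of the harmonic map on $z$ --- is correct, and the appearance of the Schur complement $\mc{L}-\mc{G}\mc{L}^{-1}\o{\mc{G}}$ is genuinely forced by the $(0,2)$-coupling in the nonlinear term $\phi_v u^v_i u^v_{\b{j}}$ of (\ref{harmonic2}).

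Two places where your sketch is thin. First, in the linearized equation you write $\n^*A_\beta$ on the right-hand side of the equation for $\delta u^v/\delta\b{z}^\beta$; since you are differentiating in $\b{z}^\beta$, the source term that actually appears is built from $\o{A_\beta}$ (or equivalently $A_{\b{\beta}}$), and one must track conjugations carefully so that after solving and substituting back the answer really takes the \emph{Hermitian} form $\langle(Id-\n(\cdots)^{-1}\n^*)A_\alpha,A_\beta\rangle$ rather than a mixed $(A_\alpha,\o{A_\beta})$ pairing. Second, the phrase ``modulo a piece that pairs trivially with $A_\alpha$'' hides the step where one shows that the $\o{\mc{G}}$-branch of the $2\times2$ inversion contributes nothing extra when paired back against $A_\alpha$; this uses the specific form of $A_\alpha$ (in particular that it is $T\mc{X}_z$-valued, not $\o{T\mc{X}_z}$-valued) and deserves at least a line of justification.
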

Now we assume that at $o\in \mc{T}$ the map $u(o)$ is holomorphic. It satisfies the harmonic equation (\ref{harmonic2}) automatically. By (\ref{1.24}), one has
\begin{align}\label{1.32}
	\frac{\p E(z)}{\p z^\alpha}|_{z=o}=0.
\end{align}
We recall \cite[(1.22)]{KWZ} that
\begin{aligns}\label{1.26}
\n^*A_{\alpha} &=\left(-g^{\b{j}i}\n_i(A_{\alpha\b{v}\b{v}}\o{u^v_j}\phi^{v\b{v}})-g^{\b{j}i}	\n_{\b{j}}(A_{\alpha\b{v}\b{v}}\o{u^v_{\b{i}}}\phi^{v\b{v}})\right)\frac{\p}{\p v}\\
&=\left(-g^{\b{j}i}A_{\alpha\b{v}\b{v}}\o{\n_{\b{i}}u^v_j}-g^{\b{j}i}u^v_i\p_v(A_{\alpha\b{v}\b{v}})\o{u^v_j}-g^{\b{j}i}	A_{\alpha\b{v}\b{v}}\o{\n_{j}u^v_{\b{i}}}\right)\phi^{v\b{v}}\frac{\p}{\p v}\\
&=0,
\end{aligns}
where the second equality holds since $u^v_{\b{i}}=0$, the third
equality follows from the harmonic equation (\ref{harmonic2}) and
Lemma \ref{lemma1} (i). Substituting (\ref{1.26}) into (\ref{1.25}) we
find
\begin{align}\label{1.28}
	\frac{\p^2E(z)}{\p z^{\alpha}\p\b{z}^{\beta}}|_{z=o}=\frac{1}{2}\int_M c(\phi)_{\alpha\b{\beta}}|du|^2d\mu_g+\langle A_\alpha, A_\beta\rangle.
\end{align}
\begin{lemma}\label{lemma6}
The following identity holds for any smooth real two form $\alpha$ on $\mc{X}_{o}$
\begin{align*}
\int_M u^*\alpha\wedge \omega_g^{n-1}=\frac{\deg_{\omega_g}(u^*K_{\mc{X}_{o}})}{2g-2}\int_{\mc{X}_{o}}\alpha,
\end{align*}
where 
	$$\deg_{\omega_g}(u^*K_{\mc{X}_{o}})=\int_M u^*c_1(K_{\mc{X}_{o}})\wedge \omega_g^{n-1} .$$
\end{lemma}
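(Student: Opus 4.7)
The key observation is that $\mc{X}_o$ is a compact Riemann surface of genus $g\geq 2$, so $H^2_{dR}(\mc{X}_o,\mathbb{R})\cong \mathbb{R}$. Hence any smooth real two-form $\alpha$ on $\mc{X}_o$ is cohomologous to a constant multiple of the hyperbolic volume form $\omega_h:=\sqrt{-1}\phi_{v\b{v}}dv\wedge d\b{v}$. My plan is to exploit this one-dimensionality and the closedness of $\omega_g$ to reduce the identity to a computation of cohomological degrees.

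The plan is as follows. First, I would write $\alpha=c\,\omega_h+d\beta$ for some real constant $c$ and a real 1-form $\beta$ on $\mc{X}_o$. Taking the integral on both sides and using Gauss--Bonnet, $\int_{\mc{X}_o}\omega_h=2\pi(2g-2)$ (this follows from the hyperbolic normalization $\p_v\p_{\b{v}}\log\phi_{v\b{v}}=\phi_{v\b{v}}$ together with $\chi(\mc{X}_o)=2-2g$), which determines
\[
c=\frac{\int_{\mc{X}_o}\alpha}{2\pi(2g-2)}.
\]

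Next, I pull back and wedge with $\omega_g^{n-1}$. Since $\omega_g$ is closed on $M$,
\[
d(u^*\beta)\wedge \omega_g^{n-1}=d\bigl(u^*\beta\wedge \omega_g^{n-1}\bigr),
\]
so by Stokes' theorem on the compact K\"ahler manifold $M$ the exact piece integrates to zero, and
\[
\int_M u^*\alpha\wedge \omega_g^{n-1}=c\int_M u^*\omega_h\wedge \omega_g^{n-1}.
\]

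The last step is to identify $[\omega_h]$ cohomologically. The Chern form of $K_{\mc{X}_o}$ with the metric $\phi_{v\b{v}}^{-1}$ (that is, the dual of the Hermitian metric on $\mc{V}^*$) is
\[
c_1(K_{\mc{X}_o})=\tfrac{\sqrt{-1}}{2\pi}\p\b{\p}\log\phi_{v\b{v}}=\tfrac{1}{2\pi}\,\omega_h,
\]
again by (\ref{1.1}). Since $u^*\omega_h$ and $\omega_g^{n-1}$ are both closed, the integral $\int_M u^*\omega_h\wedge \omega_g^{n-1}$ depends only on cohomology classes, giving
\[
\int_M u^*\omega_h\wedge \omega_g^{n-1}=2\pi\int_M u^*c_1(K_{\mc{X}_o})\wedge \omega_g^{n-1}=2\pi\deg_{\omega_g}(u^*K_{\mc{X}_o}).
\]
Substituting the value of $c$ yields exactly the claimed identity. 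There is no real obstacle here: the argument is essentially formal once one recognizes that $H^2(\mc{X}_o,\mathbb{R})$ is one-dimensional and that the hyperbolic form represents $2\pi c_1(K_{\mc{X}_o})$; the only thing to verify carefully is the normalization constant, which is pinned down by Gauss--Bonnet.
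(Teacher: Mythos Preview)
Your proof is correct and follows essentially the same approach as the paper: both exploit that $H^2(\mc{X}_o,\mb{R})$ is one-dimensional to reduce the identity to a single generator representing $c_1(K_{\mc{X}_o})$, the only difference being that you work with the hyperbolic form $\omega_h$ (normalized by Gauss--Bonnet to have area $2\pi(2g-2)$) and spell out the Stokes argument, whereas the paper passes directly to cohomology and uses a form $\omega_o$ with $\int_{\mc{X}_o}\omega_o=2g-2$.
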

\begin{proof}
  Let $\omega_o$
  be the area form on $\mc{X}_o$ such that $\int_{\mathcal
    X_o}\omega_o=c_1(K_{\mathcal X_0})[{\mathcal X_0}]=2g-2$.
  Then $  H^2(\mc{X}_{o}, \mb{R}) = \mb{R}\omega_0$
and  we need only to check the identity for
$  \omega_0$.
We have
\begin{align*}
  \int_M u^*\omega_o\wedge \omega^{n-1}
            	&=(u^*[\omega_o][\omega]^{n-1})[M]\\
	&=(u^*c_1(K_{\mc{X}_{o}})[\omega]^{n-1})[M]\\
	&=\frac{\deg_{\omega_g}(u^*K_{\mc{X}_{o}})}{2g-2}\int_{\mc{X}_{o}}\omega_0.
\end{align*}
\end{proof}
By (\ref{1.27}) and holomorphicity of $u(o)$,  the first term in the RHS of 
(\ref{1.28}) is 
\begin{aligns}\label{1.30}
	\frac{1}{2}\int_M c(\phi)_{\alpha\b{\beta}}|du|^2d\mu_g &=\int_M c(\phi)_{\alpha\b{\beta}}(g^{i\b{j}}\phi_{v\b{v}}u^v_i\o{u^v_j})\frac{\omega_g^n}{n!}\\
&=\int_M u^*(c(\phi)_{\alpha\b{\beta}}\sqrt{-1}\phi_{v\b{v}}dv\wedge d\b{v})\wedge\frac{\omega_g^{n-1}}{(n-1)!}\\
&=\frac{1}{(n-1)!}\frac{\deg_{\omega_g}(u^*K_{\mc{X}_{o}})}{2g-2}G_{\alpha\b{\beta}}, 
\end{aligns}
where the last equality follows from Lemma \ref{lemma6} and (\ref{1.29}), the second equality follows from holomorphicity of $u(o)$ and the following
elementary fact that
\begin{aligns}
\label{trace}
n \alpha\wedge \omega^{n-1}_g=(tr_{\omega_g}\alpha)\omega^n_g,
\end{aligns}
for 
any $(1,1)$-form $\alpha=\sqrt{-1}\alpha_{i\b{j}}ds^i\wedge d\b{s}^j$ with
 $tr_{\omega_g}\alpha:=g^{i\b{j}}\alpha_{i\b{j}}$, $\omega_g=\sqrt{-1}g_{i\b{j}}ds^i\wedge d\b{s}^j$.	

Similarly, by  (\ref{trace}) the second term in the RHS of (\ref{1.28}) is 
\begin{aligns}\label{1.31}
\langle A_{\alpha}, A_{\beta}\rangle&=\int_M (A^v_{\alpha\b{v}}\o{A^v_{\beta\b{v}}}\o{u^v_j}u^v_i g^{i\b{j}}\phi_{v\b{v}})\frac{\omega^n_g}{n!}\\
&=\int_M u^*(A^v_{\alpha\b{v}}\o{A^v_{\beta\b{v}}} \sqrt{-1}\phi_{v\b{v}}dv\wedge d\b{v})\wedge \frac{\omega^{n-1}_g}{(n-1)!}\\	
&=\frac{1}{(n-1)!}\frac{\deg_{\omega_g}(u^*K_{\mc{X}_{o}})}{2g-2}G_{\alpha\b{\beta}}.
\end{aligns}
Substituting (\ref{1.30}) and (\ref{1.31}) into (\ref{1.28}) we have
\begin{align}\label{1.33}
	\frac{\p^2E(z)}{\p z^{\alpha}\p\b{z}^{\beta}}|_{z=o}=\frac{1}{(n-1)!}\frac{\deg_{\omega_g}(u^*K_{\mc{X}_{o}})}{g-1}G_{\alpha\b{\beta}}.
\end{align}
The energy for $u(o)$ is now
\begin{aligns}\label{1.34}
E(o)&=\int_M g^{\b{j}i}u^v_i\o{u^v_j}\phi_{v\b{v}}\frac{\omega^n_g}{n!}\\
&=\int_M u^*(\sqrt{-1}\phi_{v\b{v}}dv\wedge d\b{v})\wedge \frac{\omega^{n-1}_g}{(n-1)!}\\
&=2\pi	\int_M u^*c_1(K_{\mc{X}_{o}})\wedge \frac{\omega^{n-1}_g}{(n-1)!}\\
&=\frac{2\pi}{(n-1)!}\deg_{\omega_g}(u^*K_{\mc{X}_{o}}).
\end{aligns}
Therefore the second variation of $\log E(z)$ at $z=o$, in view of
(\ref{1.32})-(\ref{1.33})-(\ref{1.34}) above, is
\begin{aligns}\label{1.35}
\frac{\p^2\log E(z)}{\p z^\alpha\p\b{z}^\beta}|_{z=o}&=\left(\frac{1}{E(z)}\frac{\p^2E(z)}{\p z^\alpha\p\b{z}^\beta}-\frac{1}{E(z)^2}\frac{\p E(z)}{\p z^\alpha}\frac{\p E(z)}{\p\b{z}^\beta}\right)|_{z=o}	\\
&=\frac{1}{2\pi(g-1)}G_{\alpha\b{\beta}}.
\end{aligns}
Similarly, for anti-holomorphic map $u(o)$, we also can get (\ref{1.35}). Thus 
\begin{thm}
If $u(o)$ is a holomorphic or anti-holomorphic map, then 
\begin{align*}
\sqrt{-1}\p\b{\p}\log E(z)|_{z=o}=\frac{\omega_{WP}}{2\pi(g-1)}.	
\end{align*}
\end{thm}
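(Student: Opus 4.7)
The plan is to observe that nearly all of the work has already been carried out in the computations (\ref{1.32})--(\ref{1.35}) preceding the statement, so what remains is to package them cleanly and then adapt the argument to the anti-holomorphic case.

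For the holomorphic case at $z=o$, I would first note that the first-variation formula (\ref{1.24}) has integrand $A_{\alpha\b{v}\b{v}}\o{u^v_i}\o{u^v_{\b{j}}}g^{\b{j}i}$, and the factor $\o{u^v_{\b{j}}}$ vanishes because $u(o)$ is holomorphic. Hence $\p E|_{z=o} = \b{\p}E|_{z=o} = 0$, which yields $\sqrt{-1}\p\b{\p}\log E(z)|_{z=o} = E(o)^{-1}\sqrt{-1}\p\b{\p}E(z)|_{z=o}$. Next I would establish $\n^*A_\alpha = 0$ at $o$: expanding as in (\ref{1.26}) produces three terms, each killed by a different mechanism -- one because $u^v_{\b{i}} = 0$, one from the harmonic equation (\ref{harmonic2}), and one from Lemma \ref{lemma1}(i). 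Substituting $\n^*A_\alpha = 0$ into the general second-variation formula (\ref{1.25}) eliminates the nonlocal term involving the inverse of $\mc{L} - \mc{G}\mc{L}^{-1}\o{\mc{G}}$ and leaves the cleaner expression (\ref{1.28}).

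The geometric core is then to convert both surviving integrals on $M$ into integrals on $\mc{X}_o$. Using holomorphicity, both $\frac{1}{2}|du|^2 d\mu_g$ and the Hilbert product $\langle A_\alpha, A_\beta\rangle$ can be packaged in the form $u^*(\eta)\wedge \omega_g^{n-1}/(n-1)!$ for appropriate $(1,1)$-forms $\eta$ on $\mc{X}_o$ via the trace identity (\ref{trace}). Lemma \ref{lemma6} then rewrites each as $\frac{\deg_{\omega_g}(u^*K_{\mc{X}_o})}{(n-1)!(2g-2)}\int_{\mc{X}_o}\eta$, and by (\ref{1.29}) together with Lemma \ref{lemma1}(ii) plus Stokes, both choices of $\eta$ integrate to $G_{\alpha\b{\beta}}$. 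A parallel calculation shows $E(o) = \frac{2\pi}{(n-1)!}\deg_{\omega_g}(u^*K_{\mc{X}_o})$. Forming the ratio produces $G_{\alpha\b{\beta}}/(2\pi(g-1))$, i.e.\ the desired identity.

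For the anti-holomorphic case I plan to run the same argument with the roles of $u^v_i$ and $u^v_{\b{i}}$ interchanged. The first variation still vanishes (now via $\o{u^v_i} = 0$), and $\n^*A_\alpha = 0$ at $o$ holds by the mirror three-term cancellation. Both the second-variation integrals and $E(o)$ acquire a factor of $|\deg_{\omega_g}(u^*K_{\mc{X}_o})|$ rather than $\deg_{\omega_g}(u^*K_{\mc{X}_o})$: the anti-holomorphic pullback of $\sqrt{-1}\,dv\wedge d\b{v}$ flips sign and $\deg$ itself is negative, so the absolute value enters consistently and cancels in the ratio. The main obstacle I anticipate is the sign bookkeeping in this anti-holomorphic case, together with verifying that the three distinct cancellation mechanisms in $\n^*A_\alpha = 0$ really combine as expected; the remainder is a direct assembly of previously established formulas.
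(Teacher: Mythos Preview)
Your proposal is correct and follows essentially the same route as the paper: the computations (\ref{1.32})--(\ref{1.35}) already establish the holomorphic case exactly as you describe, and the paper likewise dispatches the anti-holomorphic case with a one-line ``similarly'' remark. Your more explicit sign discussion for the anti-holomorphic case is a reasonable elaboration of what the paper leaves implicit.
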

As a corollary, we obtain
\begin{cor}
If $M$ is a Riemann surface,  and   $u(o)$ is holomorphic or anti-holomorphic, then 
\begin{align*}
\sqrt{-1}\p\b{\p}E(z)|_{z=o}=	|\deg u(o)|\cdot 2\omega_{WP},
\end{align*}
where $\deg u(o)$ is the degree of $u(o)$.
\end{cor}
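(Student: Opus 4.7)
The plan is to derive the corollary directly from the preceding theorem by converting from $\log E$ back to $E$ and then computing $E(o)$ explicitly in terms of the topological degree of $u(o)$.

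First, I would apply the general algebraic identity
$$\sqrt{-1}\p\b{\p}E(z) = E(z)\,\sqrt{-1}\p\b{\p}\log E(z) + E(z)^{-1}\,\sqrt{-1}\p E(z)\wedge \b{\p}E(z).$$
Equation (\ref{1.32}) gives $\p E(z)|_{z=o}=0$, so the second term drops out at $o$, and the preceding theorem then yields
$$\sqrt{-1}\p\b{\p}E(z)|_{z=o} = \frac{E(o)}{2\pi(g-1)}\,\omega_{WP}.$$
It therefore suffices to show that $E(o) = 4\pi(g-1)|\deg u(o)|$.

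For the holomorphic case with $M$ a Riemann surface, I would specialize (\ref{1.34}) to $n=1$: this immediately gives $E(o) = 2\pi\deg_{\omega_g}(u^*K_{\mc{X}_o})$. Since $\int_{\mc{X}_o}c_1(K_{\mc{X}_o})=2g-2$, the pullback evaluates to $\deg_{\omega_g}(u^*K_{\mc{X}_o})=\deg u(o)\cdot(2g-2)$, and holomorphicity guarantees $\deg u(o)\geq 0$. Hence $E(o)=4\pi(g-1)|\deg u(o)|$, and substituting back into the display above produces $\sqrt{-1}\p\b{\p}E(z)|_{z=o}=2|\deg u(o)|\,\omega_{WP}$.

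The main subtle step is the anti-holomorphic case, where $\deg u(o)\leq 0$ while the energy remains positive. I would redo the derivation of (\ref{1.34}) starting from (\ref{1.27}) with $u^v_i=0$: the pullback $u^*(\sqrt{-1}\phi_{v\b{v}}dv\wedge d\b{v})$ then acquires a sign, because an anti-holomorphic map between Riemann surfaces reverses orientation, so it differs from the (positive) energy density by a minus sign. Tracking this sign gives $E(o)=-2\pi\deg u(o)\cdot(2g-2)=4\pi(g-1)|\deg u(o)|$, matching the holomorphic formula via the absolute value. Plugging into the display above then yields the corollary. Apart from this orientation bookkeeping, every step is routine — this is truly a corollary in the strict sense, with no genuine obstacle beyond the sign check.
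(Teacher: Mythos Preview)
Your proposal is correct and follows essentially the same route as the paper. The paper applies (\ref{1.33}) directly to obtain $\p\b\p E|_{z=o}$ rather than passing through the theorem for $\log E$ and converting back, but since these differ only by the vanishing first variation (\ref{1.32}) and the energy value (\ref{1.34}), the two arguments are the same computation repackaged; your sign discussion in the anti-holomorphic case is in fact more explicit than the paper's, which simply inserts an absolute value.
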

\begin{proof}
If $M$ is a Riemann surface,  from (\ref{1.33}) 
\begin{align*}
	\frac{\p^2E(z)}{\p z^{\alpha}\p\b{z}^{\beta}}|_{z=o}&=\frac{1}{(n-1)!}\frac{|\deg_{\omega_g}(u^*K_{\mc{X}_{o}})|}{g-1}G_{\alpha\b{\beta}}\\
	&=\frac{|\int_M u^*c_1(K_{\mc{X}_{o}})|}{g-1}G_{\alpha\b{\beta}}\\
	&=|\deg u(o)|\cdot 2G_{\alpha\b{\beta}}.
\end{align*}
Thus
	\begin{align*}
\sqrt{-1}\p\b{\p}E(z)|_{z=o}=\frac{\p^2E(z)}{\p z^{\alpha}\p\b{z}^{\beta}}|_{z=o}\sqrt{-1}dz^\alpha\wedge d\b{z}^\beta=	|\deg u(o)|\cdot 2\omega_{WP}.
\end{align*}
\end{proof}
\begin{rem}
In particular, if $u(o)$ is the identity map, then 
$$ \sqrt{-1}\p\b{\p}E(z)|_{z=o}=2\omega_{WP},$$
which was proved by M. Wolf  \cite[Theorem 5.7]{Wolf1}.	
\end{rem}

\end{document}